\newcommand*\rel@kern[1]{\kern#1\dimexpr\macc@kerna}
\newcommand*\widebar[1]{%
  \begingroup
  \def\mathaccent##1##2{%
    \rel@kern{0.8}%
    \overline{\rel@kern{-0.8}\macc@nucleus\rel@kern{0.2}}%
    \rel@kern{-0.2}%
  }%
  \macc@depth\@ne
  \let\math@bgroup\@empty \let\math@egroup\macc@set@skewchar
  \mathsurround\z@ \frozen@everymath{\mathgroup\macc@group\relax}%
  \macc@set@skewchar\relax
  \let\mathaccentV\macc@nested@a
  \macc@nested@a\relax111{#1}%
  \endgroup
}
\newtheorem{thm}{Theorem}
\newtheorem{theorem}{Theorem}[section]
\newtheorem{corollary}[theorem]{Corollary}
\newtheorem{proposition}[theorem]{Proposition}
\newtheorem{lemma}[theorem]{Lemma}
\title [The commuting graph]{The commuting graph and a graph associated with centralizers}
\author[Lewis]{Mark L. Lewis}
\address{Department of Mathematical Sciences, Kent State University, Kent, OH  44242; lewis@math.kent.edu}
\author[McCulloch]{Ryan McCulloch}
\address{Department of Mathematics and Statistics, Binghamton University, Binghamton, NY 13902; rmccullo1985@gmail.com}
\date{}
\begin{document}
	
\begin{abstract}
Let $G$ be a $p$-group.  We begin to consider the relationship between the structure of the commuting graph and $|G:Z(G)|$.   We also build a family of groups whose commuting graphs have more than one connected component whose diameter is at least $2$.  For this, we introduce another graph related to the commuting graph that is associated with centralizers.
\end{abstract}

\subjclass[2010]{Primary 20D15; Secondary 05C25}
\keywords{Commuting graph, Diameter, Centralizers}

\maketitle


\section{Introduction}

Throughout this note all groups will be finite.  Let $G$ be a group.  A number of graphs have been attached to groups and lately the number of different graphs considered have exploded.  We recommend the reader consult the excellent surveys \cite{camsurv1}, \cite{camInd}, and \cite{camsurv2} for some of the background on many of the graphs that have been studied recently.   The {\it commuting graph} ${\mathfrak C}(G)$ for $G$ is the (simple) graph whose vertices are $G \setminus Z(G)$ and if $a,b \in G \setminus Z(G)$, $a\neq b$, then there is an edge between $a$ and $b$ if $ab = ba$.  The complement of $G$ is called the noncommuting graph.  As far as we can determine, the commuting graph was first introduced by Bertram in \cite{bertram}.  The commuting and noncommuting graphs have been studied in a number of contexts.  


In this note, we wish to consider the diameter of these graphs.  In \cite{symmetric}, Iranmanesh and Jafarzadeh conjecture for all groups that either the commuting graph is disconnected or there is a universal constant that bounds the diameter.  However, Guidici and Parker proved in \cite{nobound} that this conjecture is false by presenting a family of $2$-groups having nilpotence class $2$ with the property that for every integer $n$ there is a group in the family whose commuting graph has diameter at least $n$.  In her dissertation, Rachel Carleton has shown that for every prime $p$ there is such a family of groups \cite{carldiss}. 

In this paper, we want to show when $G$ is a $p$-group that there is a relationship between the diameter of $\mathfrak{C} (G)$ and $|G:Z (G)|$.  The first theorem is true for all groups, not just $p$-groups.

\begin{thm} \label{intro thm 2}
Let $G$ be a group.  If $|G'| < |G:Z(G)|^{1/2}$, then ${\mathfrak C} (G)$ is connected and has diameter at most $2$.
\end{thm}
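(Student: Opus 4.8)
The plan is to show that the centralizer of every non-central element of $G$ is large relative to $Z(G)$, and then to argue that any two such large subgroups must intersect outside $Z(G)$, which produces a path of length $2$.

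First I would record the standard bound on class sizes: for any $x \in G$ and $g \in G$ we have $x^g = x[x,g]$ with $[x,g] \in G'$, so the conjugacy class of $x$ lies in the coset $xG'$ and hence $|G:C_G(x)| \le |G'|$. Dividing, $|C_G(x)| \ge |G|/|G'|$, and since $Z(G) \le C_G(x)$ this gives
\[
|C_G(x):Z(G)| \ge \frac{|G:Z(G)|}{|G'|} > |G:Z(G)|^{1/2},
\]
where the last step is exactly the hypothesis $|G'| < |G:Z(G)|^{1/2}$. (In particular the hypothesis forces $G$ nonabelian, so $\mathfrak{C}(G)$ is a genuine graph with vertex set $G \setminus Z(G) \neq \emptyset$, and every vertex $x$ lies in a centralizer strictly larger than $Z(G)$.)

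Now let $a, b$ be distinct vertices of $\mathfrak{C}(G)$; I claim $d(a,b) \le 2$. If $ab = ba$ they are adjacent. Otherwise, both $C_G(a)$ and $C_G(b)$ contain $Z(G)$, so $Z(G) \subseteq C_G(a) \cap C_G(b)$. If this were an equality, the set identity $|HK| = |H|\,|K|/|H \cap K|$ applied to the subgroups $C_G(a), C_G(b)$ would yield
\[
|C_G(a)\,C_G(b)| = |Z(G)| \cdot |C_G(a):Z(G)| \cdot |C_G(b):Z(G)| > |Z(G)| \cdot |G:Z(G)| = |G|,
\]
contradicting $C_G(a)\,C_G(b) \subseteq G$. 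Hence there is $c \in C_G(a) \cap C_G(b)$ with $c \notin Z(G)$, so $c$ is a vertex of $\mathfrak{C}(G)$ commuting with both $a$ and $b$; since $a$ and $b$ do not commute, $c \neq a$ and $c \neq b$, so $a - c - b$ is a path and $d(a,b) = 2$. Thus $\mathfrak{C}(G)$ is connected of diameter at most $2$.

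I do not expect a serious obstacle here: the heart of the argument is the counting comparison above, once the two ingredients (the class-size bound $|G:C_G(x)| \le |G'|$ and the order formula for the product set $C_G(a)\,C_G(b)$) are in hand. The one point needing a word of care is that $|HK| = |H|\,|K|/|H\cap K|$ is valid for the underlying sets even when $C_G(a)$ and $C_G(b)$ do not permute, so no normality is being assumed.
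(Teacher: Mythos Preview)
Your proof is correct and follows essentially the same approach as the paper: both arguments combine the class-size bound $|G:C_G(x)| \le |G'|$ with the product-set formula $|C_G(a)\,C_G(b)| = |C_G(a)|\,|C_G(b)|/|C_G(a)\cap C_G(b)|$ to force $C_G(a)\cap C_G(b) > Z(G)$, and both explicitly note that this formula holds even when $C_G(a)\,C_G(b)$ is not a subgroup. The only cosmetic difference is that the paper phrases the contradiction as $|G:Z(G)| \le |G'|^2$ (and routes it through the auxiliary graph $\Gamma_{\mathcal Z}(G)$), whereas you rearrange the same inequality directly in $\mathfrak{C}(G)$.
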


We next want to begin to show when $G$ is a $p$-group that the structure of $\mathfrak{C} (G)$ is determined by $|G:Z (G)|$.  

\begin{thm} \label{intro thm 3}
Let $p$ be a prime and let $G$ be a $p$-group.  Then the following are true:
\begin{enumerate}
\item If $|G:Z(G)| \le p^3$, then every connected component of $\mathfrak{C} (G)$ is a complete graph.
\item If $p^4 \le |G:Z(G)| \le p^5$, then either every connected component of $\mathfrak{C} (G)$ is a complete graph or there is one connected component whose diameter is at least $2$ and its diameter is at most $4$.	 
\end{enumerate}
\end{thm}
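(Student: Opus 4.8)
The plan is to localize the failure of commutativity inside the set
$$N \;=\; \{\, x \in G\setminus Z(G) \;:\; C_G(x)\text{ is non-abelian} \,\},$$
and to show that any non-complete component of $\mathfrak C(G)$ is governed by $N$ together with elementary index bounds for $p$-groups. First I would record the basic estimate: for $x\in G\setminus Z(G)$ one has $Z(G)\le Z(C_G(x))$ and $x\in Z(C_G(x))\setminus Z(G)$, so $|Z(C_G(x)):Z(G)|\ge p$; and since $C_G(x)<G$, also $|C_G(x):Z(G)|\le |G:Z(G)|/p$. Hence if $x\in N$, then $C_G(x)/Z(C_G(x))$ is non-cyclic, so $|C_G(x):Z(C_G(x))|\ge p^2$ and therefore $|C_G(x):Z(G)|\ge p^3$. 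In particular $N=\emptyset$ whenever $|G:Z(G)|\le p^3$.

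The second ingredient is a lemma: \emph{if $a\in G\setminus Z(G)$, $C_G(a)$ is abelian, and $C_G(y)$ is abelian for every neighbor $y$ of $a$ in $\mathfrak C(G)$, then the connected component of $a$ equals $C_G(a)\setminus Z(G)$ and is a complete graph.} Indeed, for such a $y$ one has $y\in C_G(a)$, so $C_G(a)\subseteq C_G(y)$, and $a\in C_G(y)$, so $C_G(y)\subseteq C_G(a)$; thus $C_G(y)=C_G(a)$. Consequently the neighbors of $y$ all lie in $C_G(y)\setminus Z(G)=C_G(a)\setminus Z(G)=\{a\}\cup\{\text{neighbors of }a\}$, so the component of $a$ is exactly $C_G(a)\setminus Z(G)$, and this is a clique because $C_G(a)$ is abelian. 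Part~(1) is now immediate: if $G$ is abelian there is nothing to prove; otherwise $N=\emptyset$, the lemma applies to every vertex, and every component is complete.

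For part~(2) assume $p^4\le|G:Z(G)|\le p^5$. If $N=\emptyset$ the lemma again shows every component is complete, which is the first alternative. So suppose $N\ne\emptyset$. The crucial point is that all of $N$ lies in a single component $\Gamma_0$: given $x,y\in N$ we have $|C_G(x):Z(G)|,|C_G(y):Z(G)|\ge p^3$, so, using $|C_G(x)\,C_G(y)|\le|G|$,
$$|C_G(x)\cap C_G(y):Z(G)| \;\ge\; \frac{|C_G(x):Z(G)|\cdot|C_G(y):Z(G)|}{|G:Z(G)|} \;\ge\; p^{\,3+3-5}\;=\;p\;>\;1,$$
so $C_G(x)$ and $C_G(y)$ share a non-central element and hence $d(x,y)\le 2$ in $\mathfrak C(G)$. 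Moreover $\Gamma_0$ is not complete: any $x_0\in N$ has non-central, non-commuting elements $u,v\in C_G(x_0)$, and $u,v\in\Gamma_0$. Since $N\subseteq\Gamma_0$, the lemma applies to a vertex of any other component and shows that component is complete. Finally, for $a\in\Gamma_0$ we have $d(a,N)\le 1$: otherwise neither $a$ nor any neighbor of $a$ lies in $N$, and the lemma forces $\Gamma_0$ to be complete, a contradiction. Thus for $a,b\in\Gamma_0$ we may choose $x,y\in N$ with $d(a,x),d(b,y)\le1$ and $d(x,y)\le2$, giving $d(a,b)\le4$; combined with $\mathrm{diam}(\Gamma_0)\ge2$, this completes the argument.

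The hard part is the bookkeeping behind the intersection estimate: one must extract the sharp bound $|C_G(x):Z(G)|\ge p^3$ for every $x\in N$, which is precisely where the hypothesis $|G:Z(G)|\le p^5$ is used (the exponent $3+3-5=1$ must stay positive), and then observe that every vertex of $\Gamma_0$ is within distance $1$ of $N$, so that $\mathrm{diam}(\Gamma_0)$ decomposes as $1+2+1$. A secondary point needing care is to verify in the lemma that the component is \emph{equal} to $C_G(a)\setminus Z(G)$, not merely contained in it, and that it is genuinely a clique.
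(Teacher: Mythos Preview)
Your proof is correct and follows essentially the same strategy as the paper: the $|C_G(x):Z(G)|\ge p^3$ bound for non-abelian centralizers, the product-formula intersection estimate forcing $d(x,y)\le 2$ for $x,y\in N$, and the $1+2+1$ diameter decomposition are exactly the ingredients the paper uses. The only presentational difference is that the paper routes everything through the auxiliary centralizer graph $\Gamma_{\mathcal Z}(G)$ (its Lemma on isolated vertices playing the role of your lemma on vertices whose neighbors all have abelian centralizer), whereas you argue directly on $\mathfrak C(G)$; the mathematical content is the same.
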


When $G$ is a $p$-group of nilpotence class $n$ where $n \ge 3$, there is another index which can give some evidence that influences the structure of $\mathfrak{C} (G)$.  Define $G_{n-1}$ to be the $(n-1)$\textsuperscript{st} term in the lower central series for $G$.  

\begin{thm} \label{intro thm 4}
Let $p$ be a prime, let $n \ge 3$ be an integer, and let $G$ be a $p$-group of nilpotence class $n$.  If $|G:C_G (G_{n-1})|$ is $p$ or $p^2$, then $\mathfrak{C} (G)$ has at most one connected component that is not complete and it has diameter at most $8$.
\end{thm}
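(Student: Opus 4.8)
The plan is to single out one distinguished connected component of $\mathfrak{C}(G)$ and argue separately about it and about all the others. Write $Z=Z(G)$ and $C=C_G(G_{n-1})$, so that $|G:C|\in\{p,p^2\}$ by hypothesis. First I would record a few elementary facts: since $[G',G_{n-1}]\le G_{n+1}=1$ we have $G'\le C$, so $C\trianglelefteq G$ with $G/C$ abelian; since $G_{n-1}\le Z$ would force $G_n=[G_{n-1},G]=1$, contrary to $G$ having class $n$, we have $G_{n-1}\not\le Z$; since $C=Z$ would give $|G:Z|=|G:C|\le p^2$ and hence class at most $2$, we have $Z<C$; and by definition every element of $C$ commutes with every element of $G_{n-1}$. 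Now let $\mathcal{K}$ be the connected component of $\mathfrak{C}(G)$ containing $C\setminus Z$ (nonempty since $Z<C$) and put $\mathcal{K}_0=(C\cup G_{n-1})\setminus Z$. Fixing any $t_0\in G_{n-1}\setminus Z$ and any $c_0\in C\setminus Z$, one sees that every vertex of $C\setminus Z$ is adjacent to $t_0$, every vertex of $G_{n-1}\setminus Z$ is adjacent to $c_0$, and every vertex of $C\setminus Z$ is adjacent to every vertex of $G_{n-1}\setminus Z$; hence $\mathcal{K}_0\subseteq\mathcal{K}$ and any two vertices of $\mathcal{K}_0$ lie at distance at most $2$ in $\mathfrak{C}(G)$.

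The engine of the argument is the observation that if $g\in G\setminus Z$ with $C_G(g)\cap C=Z$, then $|C_G(g):Z|\le|G:C|\le p^2$, and since $g\in Z(C_G(g))\setminus Z$ we get $Z<Z(C_G(g))$, so $|C_G(g):Z(C_G(g))|\le p$ and therefore $C_G(g)$ is abelian. From this I would first deduce that every component $\Gamma\ne\mathcal{K}$ is a complete graph: for $g\in\Gamma$ no non-central element of $C_G(g)$ can lie in $C$ (it would join $g$ to $C\setminus Z\subseteq\mathcal{K}$), so $C_G(g)\cap C=Z$ and $C_G(g)$ is abelian; walking along paths in $\Gamma$ and using that the centralizers involved are abelian, one gets $C_G(w)=C_G(g)$ for all $w\in\Gamma$, whence $\Gamma=C_G(g)\setminus Z$ is complete. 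Thus $\mathcal{K}$ is the only component of $\mathfrak{C}(G)$ that can fail to be complete.

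It then remains to bound $\mathrm{diam}(\mathcal{K})$. Since distances inside $\mathcal{K}_0$ are at most $2$, it suffices to prove $d(a,\mathcal{K}_0)\le 2$ for every $a\in\mathcal{K}$, for this gives $\mathrm{diam}(\mathcal{K})\le 2+2+2\le 8$. If $a\in\mathcal{K}_0$ this is immediate, and if $a\notin C$ but $C_G(a)\cap C>Z$ then $a$ is adjacent to a vertex of $C\setminus Z\subseteq\mathcal{K}_0$. The remaining case, $a\notin C$ with $C_G(a)\cap C=Z$, is the heart of the matter: here $C_G(a)$ is abelian by the observation above, and if every neighbour $w$ of $a$ also satisfied $C_G(w)\cap C=Z$, then each $C_G(w)$ would be abelian and, containing both $a$ and $w$, would coincide with $C_G(a)$; iterating, the component of $a$ would be contained in, hence equal to, $C_G(a)\setminus Z$, a complete graph disjoint from $C\setminus Z$, contradicting $a\in\mathcal{K}$. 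Hence some neighbour $w$ of $a$ has $C_G(w)\cap C>Z$, and then $a$ is adjacent to $w$ and $w$ to some $x\in C\setminus Z\subseteq\mathcal{K}_0$, so $d(a,\mathcal{K}_0)\le 2$.

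The step I expect to be the main obstacle is precisely this last one: turning the failure of $a$'s component to close up into the clique $C_G(a)\setminus Z$ into the existence of a short path from $a$ back into $C$. Everything else — the structural facts about $C$ and $G_{n-1}$, the completeness of the other components, and the two-step reduction — is bookkeeping, and the final numerics in fact leave some room below the stated bound of $8$.
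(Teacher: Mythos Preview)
Your argument is correct and follows the same skeleton as the paper's proof (Theorem~\ref{Fittingclass more than 3}): single out the component containing $C\setminus Z$, observe that any $g$ with $C_G(g)\cap C=Z$ has $|C_G(g):Z|\le p^2$ and hence $C_G(g)$ abelian, and use this both to force the remaining components to be complete and to bound the diameter of the distinguished one. Two small remarks: since $G_{n-1}\le G'\le C$, your $\mathcal{K}_0$ is simply $C\setminus Z$, so the separate treatment of $G_{n-1}\setminus Z$ is redundant; and your numerics actually yield diameter at most $6$, tighter than the paper's stated $8$.

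The main difference is one of packaging rather than substance. The paper routes everything through the centralizer graph $\Gamma_{\mathcal{Z}}(G)$: it invokes Lemma~\ref{isolated} to say that an abelian $C_G(g)$ gives either an isolated or a subordinate vertex, then Lemma~\ref{sub3} to find a nonabelian $C_G(h)$ above it, and finally translates back to $\mathfrak{C}(G)$ via Corollary~\ref{replacement}. You do the same work directly in $\mathfrak{C}(G)$: your ``walking along paths'' argument that $C_G(w)=C_G(g)$ for adjacent vertices with abelian centralizers is exactly the content of those lemmas, unpacked. Your route is more self-contained and avoids the auxiliary graph; the paper's route makes the structural role of maximal and minimal centralizers more visible and reuses machinery developed for the other theorems.
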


Finally, all of the groups in the literature have the property that if the commuting graph is disconnected, then either all of the connected components are complete or all but one of the connected components are complete and there is one connected component that is not complete.  We will build a family of examples of groups whose commuting graphs have more than one connected component that is not a complete graph.  In fact, we will be able to construct groups whose commuting graphs have a specified number of noncomplete connected components and we can specify diameters of each of the connected components.

We close by mentioning that we accomplish the work in this paper by introducing a new graph on the centralizers of the group and then proving that our graph on the centralizers and the commuting graph has correspondence on connected components that preserves diameter except for complete connected components which correspond to isolated vertices.

\section {Centers and Centralizers}

If $g \in G \setminus Z(G)$, we will write $Z (g) = Z (C_G (g))$.  We write $\mathcal {Z} (G) = \{ Z(g) \mid g \in G \setminus Z(G) \}$ and $\mathcal{C} (G) = \{ C_G (g) \mid g \in G \setminus Z(G) \}$.  

The following lemma has been proved as Lemma 2.3 in \cite{max abel}.

\begin{lemma} \label{three}
Let $G$ be a group and suppose $a,b \in G \setminus Z(G)$.
\begin{enumerate}
\item $a \in C_G(b)$ if and only if $Z(a) \le C_G (b)$.
\item $Z(a) \le C_G (b)$ if and only if $Z(b) \le C_G (a)$.
\end{enumerate}
\end{lemma}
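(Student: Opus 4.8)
The plan is to prove part (1) straight from the definitions and then obtain part (2) as a formal consequence of part (1) together with the symmetry of the commuting relation.

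For part (1), the one fact that makes everything run is the observation that $a \in Z(a)$. Indeed, by definition every element of $C_G(a)$ commutes with $a$, so $a$ lies in the center of $C_G(a)$, i.e.\ $a \in Z(C_G(a)) = Z(a)$. Granting this, the implication ``$Z(a) \le C_G(b) \Rightarrow a \in C_G(b)$'' is immediate. For the reverse implication, suppose $a \in C_G(b)$; equivalently $b \in C_G(a)$. Then for any $z \in Z(a) = Z(C_G(a))$, the element $z$ commutes with every element of $C_G(a)$, in particular with $b$, so $z \in C_G(b)$. Hence $Z(a) \le C_G(b)$, completing part (1).

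For part (2), the point is simply that the condition $a \in C_G(b)$ is symmetric in $a$ and $b$, since it merely asserts $ab = ba$. Applying part (1) twice therefore gives $Z(a) \le C_G(b) \iff a \in C_G(b) \iff b \in C_G(a) \iff Z(b) \le C_G(a)$, which is exactly the claim.

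There is no real obstacle in this argument; the only thing worth flagging is that one should record the elementary fact $a \in Z(C_G(a))$ up front, as it is what converts the containment $Z(a) \le C_G(b)$ back into membership $a \in C_G(b)$. The hypothesis $a, b \notin Z(G)$ is used only to ensure that $Z(a)$ and $Z(b)$ are defined in the sense of the paper; the chain of equivalences itself never invokes it.
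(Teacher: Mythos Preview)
Your proof is correct and is the natural argument from the definitions. Note that the paper itself does not supply a proof of this lemma; it simply cites Lemma~2.3 of \cite{max abel}, so there is nothing in the present paper to compare your argument against.
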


\begin{lemma}\label{three one}
Let $G$ be a group and suppose $a,b \in G \setminus Z(G)$.  Then $a \in Z(b)$ if and only if $Z(a) \le Z(b)$.    
\end{lemma}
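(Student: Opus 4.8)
The plan is to unwind the definition $Z(x) = Z(C_G(x))$ and invoke Lemma~\ref{three}. The one elementary fact I will use repeatedly is that $x \in Z(x)$ for every $x \in G \setminus Z(G)$: by definition $x$ commutes with every element of $C_G(x)$, so $x$ lies in the center of $C_G(x)$, which is $Z(x)$.

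The backward implication is then immediate: if $Z(a) \le Z(b)$, then since $a \in Z(a)$ we get $a \in Z(b)$.

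For the forward implication I would argue as follows. Suppose $a \in Z(b) = Z(C_G(b))$. In particular $a \in C_G(b)$, so Lemma~\ref{three}(1) gives $Z(a) \le C_G(b)$; this already places every element of $Z(a)$ inside $C_G(b)$, which is what makes the phrase ``central in $C_G(b)$'' meaningful. It then remains to show each $z \in Z(a) = Z(C_G(a))$ is central in $C_G(b)$. Fix $c \in C_G(b)$. Since $a$ is central in $C_G(b)$ and $c \in C_G(b)$, the elements $a$ and $c$ commute, so $c \in C_G(a)$; and since $z$ is central in $C_G(a)$, it commutes with $c$. As $c$ was arbitrary, $z \in Z(C_G(b)) = Z(b)$, whence $Z(a) \le Z(b)$.

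There is no real obstacle here — the statement is a direct unwinding of definitions together with Lemma~\ref{three}. The only point requiring a little care is ensuring that the elements of $Z(a)$ actually lie in $C_G(b)$ before one speaks of their being central there, and that is precisely supplied by part~(1) of Lemma~\ref{three} applied to the relation $a \in C_G(b)$.
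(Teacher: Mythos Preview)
Your proof is correct and follows essentially the same route as the paper. The paper's version is slightly more streamlined: from $a\in Z(C_G(b))$ it records the single containment $C_G(b)\le C_G(a)$ and then concludes $Z(a)=Z(C_G(a))\le Z(C_G(b))=Z(b)$ in one line, whereas you unpack that containment element by element and invoke Lemma~\ref{three}(1) explicitly to place $Z(a)$ inside $C_G(b)$; but the underlying logic is identical.
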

    
\begin{proof}
Suppose $a \in Z (b)$.  This implies that $a \in Z(C_G (b))$.  Thus, $a$ centralizes $C_G (b)$.  Hence, $C_G (b) \le C_G (a)$.  It follows that $Z(a) = Z(C_G (a)) \le Z(C_G (b)) = Z(b)$.  Conversely, suppose that $Z(a) \le Z(b)$.  We have $a \in Z(a)$, so $a \in Z(b)$.
\end{proof}

The following lemma is partly an immediate observation and partly folklore that we gave a proof of in Lemma 3.2 in \cite{max abel}.

\begin{lemma} \label{abel cent}
Let $G$ be a group and suppose $a \in G \setminus Z(G)$.  Then the following are equivalent:
\begin{enumerate}
\item $C_G (a) = Z (a)$.
\item $C_G (a)$ is abelian.
\item $C_G (a)$ is maximal among abelian subgroups of $G$.
\end{enumerate}
\end{lemma}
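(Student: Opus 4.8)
The plan is to prove the equivalences by establishing $(1)\Rightarrow(2)$, $(2)\Rightarrow(1)$, $(2)\Rightarrow(3)$, and $(3)\Rightarrow(2)$; since statements $(1)$ and $(2)$ are little more than unwindings of the definition of the center, the only implication carrying any real content is $(2)\Rightarrow(3)$.

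First I would dispose of the equivalence of $(1)$ and $(2)$. Recall that by our convention $Z(a) = Z(C_G(a))$. If $C_G(a) = Z(a)$, then $C_G(a) = Z(C_G(a))$, and a group equal to its own center is abelian, giving $(2)$. Conversely, if $C_G(a)$ is abelian, then $Z(C_G(a)) = C_G(a)$, that is, $Z(a) = C_G(a)$, which is $(1)$.

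Next comes the step $(2)\Rightarrow(3)$, which is the heart of the argument. Assume $C_G(a)$ is abelian; it is then an abelian subgroup of $G$, so it remains only to show that no abelian subgroup properly contains it. Let $A \le G$ be abelian with $C_G(a) \le A$. Since $a \in C_G(a) \le A$ and $A$ is abelian, every element of $A$ commutes with $a$, hence $A \le C_G(a)$, forcing $A = C_G(a)$. Thus $C_G(a)$ is maximal among the abelian subgroups of $G$. Finally, $(3)\Rightarrow(2)$ is immediate, since a subgroup that is maximal among abelian subgroups is in particular abelian; this closes the cycle and yields the equivalence of all three conditions.

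I do not expect a genuine obstacle here, as the lemma is essentially bookkeeping with definitions. The one point that needs a moment's care is in $(2)\Rightarrow(3)$: one must observe that $a$ itself lies in $C_G(a)$, hence in every abelian overgroup $A$ of $C_G(a)$, which is precisely what forces $A$ to centralize $a$ and so collapse back to $C_G(a)$.
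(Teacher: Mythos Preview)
Your proof is correct and follows essentially the same route as the paper: both establish a cycle of implications among the three conditions, with $(1)\Leftrightarrow(2)$ and $(3)\Rightarrow(2)$ being immediate from the definitions. The only difference is that for $(2)\Rightarrow(3)$ the paper cites an external reference (Lemma~3.2 of \cite{max abel}), whereas you supply the short direct argument---observing that $a\in C_G(a)\le A$ forces $A\le C_G(a)$---which is exactly what that cited lemma does.
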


\begin{proof}
If $C_G(a) = Z(a)$, then $C_G (a) = Z(C_G (a))$, so obviously $C_G (a)$ is abelian.  We showed in Lemma 3.2 of \cite{max abel} that $C_G (a)$ abelian implies that $C_G (a)$ is maximal abelian.  Finally, if $C_G (a)$ is maximal abelian, then it is abelian, and $C_G (a) = Z(C_G (a)) =Z(a)$.    
\end{proof}

It is not difficult to see that every element in $G \setminus Z(G)$ lies in some $Z(g)$.  We now show that the centralizers of $G$ are covered by these sets.

\begin{corollary}
Let $G$ be a group, and let $a \in G \setminus Z(G)$.  Then $$C_G (a) = \bigcup_{b \in C_G(a) \setminus Z(G)} Z(b).$$
\end{corollary}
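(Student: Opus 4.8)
The plan is to prove the two set inclusions separately. The inclusion $\bigcup_{b \in C_G(a) \setminus Z(G)} Z(b) \subseteq C_G(a)$ should follow quickly from Lemma~\ref{three}: if $b \in C_G(a) \setminus Z(G)$, then since $a,b$ commute we have $a \in C_G(b)$, hence by Lemma~\ref{three}(1) applied with the roles of $a$ and $b$ interchanged, $Z(b) \le C_G(a)$. So every set $Z(b)$ occurring in the union is contained in $C_G(a)$, giving the first inclusion. (One should double-check the exact quantifier form of Lemma~\ref{three}(1): it states $a \in C_G(b) \iff Z(a) \le C_G(b)$, so with $a$ and $b$ swapped, $b \in C_G(a) \iff Z(b) \le C_G(a)$, which is exactly what is needed.)

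For the reverse inclusion $C_G(a) \subseteq \bigcup_{b \in C_G(a)\setminus Z(G)} Z(b)$, take an arbitrary $x \in C_G(a)$. If $x \in G \setminus Z(G)$, then $x \in C_G(a)\setminus Z(G)$, and since $x \in Z(x)$ always holds (as $x \in Z(C_G(x))$ trivially, because $x$ is central in its own centralizer), $x$ lies in the term $Z(x)$ of the union, so we are done. The only remaining case is $x \in Z(G)$. Here I need to exhibit some $b \in C_G(a)\setminus Z(G)$ with $x \in Z(b)$. The natural choice is $b = a$ itself: since $a \in G \setminus Z(G)$ and $a \in C_G(a)$, we have $a \in C_G(a)\setminus Z(G)$, and $Z(G) \le Z(a)$ because $Z(G) \le Z(C_G(a)) = Z(a)$ (every element of $Z(G)$ centralizes everything, in particular $C_G(a)$). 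Hence $x \in Z(G) \le Z(a)$, so $x$ lies in the term $Z(a)$ of the union.

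Combining the two inclusions gives the equality. The argument is essentially a bookkeeping exercise built on Lemma~\ref{three}; the only mild subtlety — and the step I would be most careful about — is handling the central elements $x \in Z(G)$ in the reverse inclusion, since those are excluded from being indices $b$ in the union but must still be covered, and the remark in the text preceding the statement ("every element in $G\setminus Z(G)$ lies in some $Z(g)$") does not directly address them. Using $b=a$ and the observation $Z(G) \le Z(a)$ resolves this cleanly. I would also note that $C_G(a)\setminus Z(G)$ is nonempty precisely because $a$ lies in it, so the union is never empty.
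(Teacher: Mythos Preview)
Your proof is correct and follows essentially the same approach as the paper: both directions use Lemma~\ref{three}(1) for the containment $Z(b)\le C_G(a)$ and the observation $b\in Z(b)$ for the reverse inclusion. You are simply more explicit than the paper about covering the central elements via $Z(G)\le Z(a)$, which the paper's proof leaves implicit in the phrase ``it follows that $C_G(a)\subseteq\bigcup Z(b)$.''
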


\begin{proof}
By Lemma \ref{three} (1), we have if $b \in C_G (a) \setminus Z(G)$, then $Z(b) \le C_G(a)$, so $\displaystyle \bigcup_{b \in C_G(a) \setminus Z(G)} Z(b) \le C_G (a)$.  But also, $b \in Z(b) \setminus Z(G)$, so $\displaystyle  C_G (a) \setminus Z(G) \subseteq \bigcup_{b \in C_G(a) \setminus Z(G)} Z(b) \setminus Z(G)$.  It follows that $\displaystyle  C_G (a) \subseteq \bigcup_{b \in C_G(a) \setminus Z(G)} Z(b)$, and the desired equality follows.
\end{proof}

We now have a double centralizer condition.

\begin{lemma} \label{cent equality}
Let $G$ be a group and let $g \in G \setminus Z(G)$.  Then $C_G (g) = C_G (Z(g))$.
\end{lemma}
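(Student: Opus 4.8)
The plan is to prove the two inclusions $C_G(g) \le C_G(Z(g))$ and $C_G(Z(g)) \le C_G(g)$ separately, leaning on the fact that $Z(g) = Z(C_G(g))$ by definition. Recall that for any subgroup $H \le G$ we trivially have $H \le C_G(Z(H))$, since every element of $H$ centralizes the center of $H$; applying this with $H = C_G(g)$ gives $C_G(g) \le C_G(Z(C_G(g))) = C_G(Z(g))$, which is the first inclusion and requires essentially no work.

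For the reverse inclusion, I would argue as follows. Let $x \in C_G(Z(g))$, i.e. $x$ centralizes every element of $Z(g)$. Since $g \in Z(g)$ (because $g \in Z(C_G(g))$, as $g$ is central in its own centralizer), in particular $x$ centralizes $g$, so $x \in C_G(g)$. That is the whole argument: $g$ itself belongs to $Z(g)$, so centralizing all of $Z(g)$ already forces centralizing $g$. Hence $C_G(Z(g)) \le C_G(g)$, and combined with the first inclusion we get equality.

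There is essentially no obstacle here; the lemma is a formal consequence of the definition $Z(g) = Z(C_G(g))$ together with the observation $g \in Z(g)$, which was already used in the proof of Lemma \ref{three one}. If one wanted to present it even more symmetrically, one could note that $C_G(g) \supseteq C_G(Z(g)) \supseteq C_G(C_G(g))$ where the last containment uses $Z(g) \le C_G(g)$ and order-reversal of centralizers, and then observe that $g \in Z(g)$ collapses the chain; but the direct two-line argument above is cleanest. The only thing to double-check is that $g \notin Z(G)$ is genuinely needed — it is, since otherwise $Z(g)$ is not defined — and that is given in the hypothesis.
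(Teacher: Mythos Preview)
Your proof is correct and follows essentially the same approach as the paper: both arguments obtain $C_G(g) \le C_G(Z(g))$ from $Z(g) = Z(C_G(g))$ together with the fact that a subgroup centralizes its own center, and obtain the reverse inclusion from $g \in Z(g)$. The extra remarks you make about the symmetric chain and the necessity of $g \notin Z(G)$ are fine but inessential.
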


\begin{proof}
Since $Z(g) = Z(C_G (g))$, we have $C_G (g) \le C_G (Z(g))$.  On the other hand, since $g \in Z(g)$, we see that the elements of $C_G (Z(g))$ centralize $g$, and so $C_G (Z(g)) \le C_G (g)$.  Thus, the equality holds.     
\end{proof}

The next lemma is key. 

\begin{lemma} \label{star 1}
Let $G$ be a group and let $g, h \in G \setminus Z(G)$.  Then 
\begin{enumerate}
\item $C_G (g) \le G_G (h)$ if and only if $Z(h) \le Z(g)$.
\item $C_G (g) = C_G (h)$ if and only if $Z (g) = Z(h)$.
\end{enumerate}
\end{lemma}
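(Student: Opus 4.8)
The plan is to deduce both statements from the lemmas already established, chiefly Lemma \ref{cent equality} (the double-centralizer identity $C_G(g) = C_G(Z(g))$) together with Lemma \ref{three one} and the elementary fact that taking centralizers is inclusion-reversing. For part (1), first I would prove the forward direction: assume $C_G(g) \le C_G(h)$. Taking centralizers reverses the inclusion, so $C_G(C_G(h)) \le C_G(C_G(g))$. Now $h \in C_G(C_G(h))$, since $h$ centralizes everything in $C_G(h)$, and hence $h \in C_G(C_G(g))$; that is, $h$ centralizes $C_G(g)$, so $h \in Z(C_G(g)) = Z(g)$. By Lemma \ref{three one} applied with the roles ``$a = h$, $b = g$'', $h \in Z(g)$ gives $Z(h) \le Z(g)$. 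For the converse, assume $Z(h) \le Z(g)$. Applying $C_G(-)$ reverses this to $C_G(Z(g)) \le C_G(Z(h))$, and then Lemma \ref{cent equality} rewrites both sides as $C_G(g) \le C_G(h)$, as desired.

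For part (2), I would simply note that $C_G(g) = C_G(h)$ is equivalent to the conjunction $C_G(g) \le C_G(h)$ and $C_G(h) \le C_G(g)$, which by part (1) is equivalent to $Z(h) \le Z(g)$ and $Z(g) \le Z(h)$, i.e.\ to $Z(g) = Z(h)$. So part (2) is a formal consequence of part (1) and needs no further work.

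I do not expect any serious obstacle here; this is a packaging lemma assembling earlier results. The one point requiring a little care is the forward direction of (1): one must remember to invoke the ``double centralizer'' containment $h \in C_G(C_G(h))$ and then translate the resulting membership $h \in Z(g)$ into the subgroup inclusion $Z(h) \le Z(g)$ via Lemma \ref{three one}, rather than trying to argue directly with centers. I should also double-check that the statement as typeset intends $C_G(h)$ where it currently reads ``$G_G(h)$'' in item (1) --- this is evidently a typo for $C_G(h)$, and the proof treats it as such. Everything else is a two-line application of the antitone behavior of $g \mapsto C_G(g)$ combined with Lemma \ref{cent equality}.
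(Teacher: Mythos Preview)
Your proposal is correct and matches the paper's approach: the converse of (1) and all of (2) are argued identically, and for the forward direction of (1) you take a minor variant (passing through $h \in Z(g)$ via double centralizers and Lemma \ref{three one}) where the paper argues in one line that $C_G(g) \le C_G(h)$ forces $Z(C_G(h)) \le Z(C_G(g))$ directly. Your observation about the typo $G_G(h)$ for $C_G(h)$ is also correct.
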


\begin{proof}
Suppose that $C_G (g) \le C_G (h)$.   Then $Z (h) = Z (C_G (h)) \le Z (C_G (g)) = Z(g)$.  Conversely, suppose that $Z(h) \le Z (g)$.  By Lemma \ref{cent equality}, we know that $C_G (g) = C_G (Z(g)) \le C_G (Z(h)) = Z(h)$.  Notice that (2) follows from several applications of (1). 
\end{proof}

We write $\mathcal {Z} (G) = \{ Z(g) \mid g \in G \setminus Z(G) \}$ and $\mathcal{C} (G) = \{ C_G (g) \mid g \in G \setminus Z(G) \}$.  Notice that Lemma \ref{star 1} (2) gives us a bijection between $\mathcal{Z} (G)$ and $\mathcal {C} (G)$, and Lemma \ref{star 1} (1) implies that this bijection is containment reversing.

We now write formally in this next corollary.

\begin{corollary}\label{lem:corres}
Let $G$ be a group. 
Then the map $\phi : \mathcal {C} (G) \rightarrow \mathcal {Z} (G)$ given by 
$$\phi(X) = Z (X) = C_G (X)$$
is an order-reversing $1$-to-$1$ correspondence with $\phi^{-1} : \mathcal {Z} (G) \rightarrow \mathcal {C} (G)$ given by $$\phi^{-1} (Y) = {C}_G (Y).$$
\end{corollary}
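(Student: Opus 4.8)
The plan is to assemble Corollary~\ref{lem:corres} directly from Lemma~\ref{star 1} and Lemma~\ref{cent equality}, which together already do all of the work; the corollary is essentially a repackaging of those two facts into the language of a correspondence.

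First I would check that the map $\phi$ is well defined and lands in $\mathcal{Z}(G)$. Given $X \in \mathcal{C}(G)$, write $X = C_G(g)$ for some $g \in G \setminus Z(G)$. Then $Z(X) = Z(C_G(g)) = Z(g) \in \mathcal{Z}(G)$ by definition, and Lemma~\ref{cent equality} gives $C_G(Z(g)) = C_G(g) = X$, so actually $Z(X) = C_G(X)$ as claimed in the displayed formula — here I should be a little careful: the statement writes ``$\phi(X) = Z(X) = C_G(X)$'', and the content is that for a centralizer $X$, the subgroup $Z(X)$ coincides with $C_G(X)$; this is exactly Lemma~\ref{cent equality} applied with $X = C_G(g)$, since $C_G(C_G(g)) = C_G(Z(g)) = Z(g) = Z(C_G(g))$. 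I would also note well-definedness is independent of the choice of $g$: if $C_G(g) = C_G(h)$ then $Z(g) = Z(h)$ by Lemma~\ref{star 1}(2), so $Z(X)$ does not depend on the chosen representative.

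Next I would show $\phi$ is a bijection. Injectivity: if $\phi(X_1) = \phi(X_2)$, write $X_i = C_G(g_i)$; then $Z(g_1) = Z(g_2)$, so $C_G(g_1) = C_G(g_2)$ by Lemma~\ref{star 1}(2), i.e.\ $X_1 = X_2$. Surjectivity: any $Y \in \mathcal{Z}(G)$ has the form $Y = Z(g)$ for some $g$, and then $\phi(C_G(g)) = Z(C_G(g)) = Z(g) = Y$. The same computation identifies the inverse: $\phi^{-1}(Y) = C_G(Y)$, since for $Y = Z(g)$ we have $C_G(Y) = C_G(Z(g)) = C_G(g)$ by Lemma~\ref{cent equality}, and $\phi(C_G(g)) = Z(g) = Y$; conversely $\phi^{-1}(\phi(C_G(g))) = C_G(Z(g)) = C_G(g)$. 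Finally, order-reversal is immediate from Lemma~\ref{star 1}(1): $C_G(g) \le C_G(h)$ iff $Z(h) \le Z(g)$, which says precisely that $\phi$ reverses inclusions (and hence so does $\phi^{-1}$).

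I do not expect any genuine obstacle here — the corollary is a formal consequence of results already in hand, so the ``main difficulty'' is purely bookkeeping: making sure representatives are chosen consistently and that the two displayed equalities $\phi(X) = Z(X) = C_G(X)$ and $\phi^{-1}(Y) = C_G(Y)$ are read correctly (the first asserting that on a centralizer the center and the full centralizer-of-it agree, via Lemma~\ref{cent equality}; the second being the literal description of the inverse map). One could even phrase the whole proof in a single sentence pointing to Lemma~\ref{star 1} and Lemma~\ref{cent equality}, but I would spell out the four verifications (well-defined, injective, surjective, order-reversing) for the reader's convenience.
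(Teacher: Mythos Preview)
Your proposal is correct and matches the paper's approach: the paper states this corollary without a separate proof, simply remarking beforehand that Lemma~\ref{star 1}(2) gives the bijection and Lemma~\ref{star 1}(1) gives order-reversal. Your write-up just makes explicit the bookkeeping (well-definedness, the inverse formula via Lemma~\ref{cent equality}) that the paper leaves implicit.
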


We note that $\mathcal {Z} (G)$ and $\mathcal {C} (G)$ are not usually lattices, but we obtain the following for intersections.

\begin{lemma} \label{intersection}
Let $G$ be a group and let $g, h \in G \setminus Z(G)$.   Then either $Z(g) \cap Z(h) = Z(G)$ or $\displaystyle Z(g) \cap Z(h) = \prod Z(c)$ where $c$ runs over the elements in $(Z (g) \cap Z(h)) \setminus Z(G)$.
\end{lemma}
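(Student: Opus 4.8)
The plan is to prove the two cases of the dichotomy by analyzing when $Z(g) \cap Z(h)$ contains an element outside $Z(G)$. Suppose first that $Z(g) \cap Z(h) = Z(G)$; then we are in the first alternative and there is nothing more to do. So assume instead that there exists some $c \in (Z(g) \cap Z(h)) \setminus Z(G)$, and our goal is to show $Z(g) \cap Z(h) = \prod Z(c)$ where $c$ ranges over $(Z(g) \cap Z(h)) \setminus Z(G)$. The product here is the subgroup generated by all these $Z(c)$'s, so I would show containment in both directions as subgroups of $G$.

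For the forward containment $Z(g) \cap Z(h) \subseteq \prod_c Z(c)$: take any $x \in Z(g) \cap Z(h)$. If $x \in Z(G)$, then since $(Z(g) \cap Z(h)) \setminus Z(G)$ is nonempty we may pick some $c$ in it, and $Z(c)$ contains $c$ and hence its intersection with the abelian-ish structure... more carefully, since $Z(G) \le Z(c)$ always holds (as $Z(G)$ centralizes everything), we get $x \in Z(G) \le Z(c) \le \prod_c Z(c)$. If $x \notin Z(G)$, then $x$ is itself one of the indexing elements $c$, and $x \in Z(x) \le \prod_c Z(c)$. For the reverse containment: each indexing element $c$ lies in $Z(g) \cap Z(h)$ by definition, so $Z(c) \le Z(g)$ and $Z(c) \le Z(h)$ by Lemma~\ref{three one}. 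Hence $Z(c) \le Z(g) \cap Z(h)$ for every such $c$, and therefore the subgroup they generate satisfies $\prod_c Z(c) \le Z(g) \cap Z(h)$.

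The one point requiring a little care is that $\prod_c Z(c)$ denotes a genuine subgroup (the join of the subgroups $Z(c)$), and that $Z(g) \cap Z(h)$ is itself a subgroup so that "generated by" makes sense and both inclusions are inclusions of subgroups; this is immediate since $Z(g)$ and $Z(h)$ are subgroups. I do not anticipate a real obstacle here — the statement is essentially the observation that a subgroup which contains $Z(c)$ for each of its non-central elements $c$, and contains $Z(G)$, is the join of those $Z(c)$'s — with the work already done by Lemma~\ref{three one}. The only thing to watch is the degenerate situation where $(Z(g)\cap Z(h)) \setminus Z(G)$ is empty, which is exactly the first alternative and is why the statement is phrased as a dichotomy.
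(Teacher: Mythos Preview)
Your proposal is correct and follows essentially the same approach as the paper's proof: both split into the trivial case $Z(g)\cap Z(h)=Z(G)$ and otherwise verify the two containments using Lemma~\ref{three one} for $\prod_c Z(c)\le Z(g)\cap Z(h)$ and the observations $c\in Z(c)$ and $Z(G)\le Z(c)$ for the reverse. The only cosmetic difference is that you prove the forward containment first while the paper does the reverse first.
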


\begin{proof}
If $Z(g) \cap Z(h) = Z(G)$, then we are done.  Thus, we assume $Z(g) \cap Z(h) > Z(G)$.  Suppose $c \in (Z (g) \cap Z(h)) \setminus Z(G)$.  By Lemma \ref{three one}, we have $Z(c) \le Z(g)$ and $Z(c) \le Z(h)$, so $Z(c) \le Z(g) \cap Z(h)$.  Since $c$ was arbitrary, we have $\displaystyle \prod Z(c) \le Z(g) \cap Z(h)$ where $c$ runs over all the elements in $(Z(g) \cap Z(h)) \setminus Z(G)$.  On the other hand, observe that since $c \in Z(c)$ for every $c \in (Z(g) \cap Z(h)) \setminus Z(G)$, we have $\displaystyle \displaystyle (Z(g) \cap Z(h)) \setminus Z(G) \subseteq \prod Z(c)$.  Since $Z(G) \le Z(c)$ for every $c$, it follows that $\displaystyle Z(g) \cap Z(h) \le \prod Z(c)$.  Thus, equality holds.    
\end{proof}

Expanding on the work of It\^o in \cite{ito}, Rebmann in \cite{reb} defines a group $G$ to be an {\it $F$-group} if for all $x,y \in G \setminus Z(G)$ the condition $C_G(x) \le C_G (y)$ implies $C_G (x) = C_G (y)$.  Following \cite{DHJ}, we say that a group $G$ is a CA-group if $C_G (g)$ is abelian for all $g \in G \setminus Z(G)$.  It is not difficult to see that every CA-group will be an $F$-group.  However, there are many $F$-groups that are not CA-groups.  Recall that Suzuki has classified the groups where the centralizers of all nonidentity elements are abelian.  In particular, it is shown in \cite{bsw} and \cite{suz} that if $G$ is a nonabelian group where all the centralizers are abelian, then either $G$ is ${\rm PSL} (2,2^m)$ for some integer $m \ge 2$ or $G$ is a Frobenius group whose Frobenius kernel and Frobenius complement are abelian.

\section{Commuting graphs}\label{sec:some}

In this section, we introduce a graph that is closely related to the commuting graph.  

Let $G$ be a group.  The commuting graph $\mathfrak {C} (G)$ for $G$ is the graph whose vertices are $G \setminus Z(G)$ and if $a,b \in G \setminus Z(G)$, $a \neq b$, then there is an edge between $a$ and $b$ if $ab = ba$.  

We first consider when $\mathfrak{C} (G_1) \cong \mathfrak{C} (G_2)$.  It is not difficult to see that $|G_1| = |G_2|$ is equivalent to $|G_1:Z(G_1)| = |G_2:Z(G_2)|$ which is equivalent to $|Z(G_1)| = |Z(G_2)|$.   However, it is not obvious that these equalities have to hold in general.   We will show that if we make the additional assumption that $G_1$ and $G_2$ are both $p$-groups for some prime $p$, then these equalities hold.

\begin{lemma}\label{lem:C_sizes}
Let $G_1$ and $G_2$ be groups so that $\mathfrak{C} (G_1) \cong \mathfrak{C} (G_2)$.  Then the following are equivalent:
\begin{enumerate}
\item $|G_1| = |G_2|.$
\item $|G_1:Z(G_1)| = |G_2:Z(G_2)|$.
\item $|Z(G_1)| = |Z(G_2)|$.
\end{enumerate}
Furthermore, if there is a prime $p$ so that $G_1$ and $G_2$ are also both $p$-groups, then these three equations necessarily hold.
\end{lemma}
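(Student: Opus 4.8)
The plan is to extract everything from a single vertex count and then invoke $p$-adic valuations for the last assertion.

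First I would observe that a graph isomorphism $\mathfrak{C}(G_1)\cong\mathfrak{C}(G_2)$ matches up the vertex sets, so $|G_1|-|Z(G_1)|=|G_2|-|Z(G_2)|$; I will call this common value $N$. I would also record the (standard, and here tacit) convention that $G_1$ and $G_2$ are nonabelian — if one of them is abelian, so is the other, and both commuting graphs are empty — so that $N\ge 1$. Writing $z_i=|Z(G_i)|$, we then have $|G_i|=z_i+N$, which makes (1)$\Leftrightarrow$(3) immediate. Moreover $|G_i:Z(G_i)|=1+N/z_i$, an integer since $z_i$ divides $|G_i|$, and the map $t\mapsto 1+N/t$ is injective on positive integers once $N\ge 1$, so equality of the two indices is equivalent to equality of the two center orders; this gives (2)$\Leftrightarrow$(3).

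For the ``furthermore'' clause I would specialize to the $p$-group case, writing $|G_i|=p^{n_i}$ and $z_i=p^{m_i}$. Nonabelianness gives $n_i>m_i$, so
$$N=p^{n_i}-p^{m_i}=p^{m_i}\bigl(p^{\,n_i-m_i}-1\bigr),$$
and the second factor is coprime to $p$. Hence the exact power of $p$ dividing $N$ is $p^{m_i}$ for each $i=1,2$, which forces $m_1=m_2$ and therefore $z_1=z_2$. By the equivalence already established, all three equalities then hold.

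The proof is genuinely short, so I do not anticipate a real obstacle; the one point that needs care is the reduction to nonabelian groups, since it is exactly this that supplies $n_i>m_i$ and lets the $p$-adic valuation of $N$ detect $|Z(G_i)|$. (Without nonabelianness the final claim fails outright, as $\mathbb{Z}/p$ versus $\mathbb{Z}/p^2$ shows, so it is worth stating the hypothesis explicitly.)
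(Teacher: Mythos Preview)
Your proof is correct and follows essentially the same route as the paper: both extract the identity $|G_1|-|Z(G_1)|=|G_2|-|Z(G_2)|$, rewrite it as $|Z(G_i)|\bigl(|G_i:Z(G_i)|-1\bigr)$ to obtain the equivalences, and then read off $|Z(G_i)|$ as the exact $p$-part of this quantity in the $p$-group case. Your version is slightly more explicit in handling the equivalence (2)$\Leftrightarrow$(3) and in flagging the nonabelian hypothesis needed for the $p$-adic argument, which the paper leaves tacit.
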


\begin{proof}
Observe that since $\mathfrak C(G_1) \cong \mathfrak C(G_2)$, we have $|G_1| - |Z(G_1)| = |G_2| - |Z(G_2)|$.  This implies that $$|Z(G_1)| (|G_1:Z(G_1)| - 1) = |Z(G_2)| (|G_2:Z(G_2)| - 1).$$  The equivalence of the three equations is now immediate.  

Assume now that there is a prime $p$ so that $G_1$ and $G_2$ are $p$-groups.  For each $i$, observe that $|Z(G_i)|$ will be the full $p$-power dividing $|Z(G_i)| (|G_i:Z(G_i)| - 1)$.  Hence, $|Z(G_1)|$ will equal $|Z(G_2)|$, and this proves the lemma.
\end{proof}

The following variation on the commuting graph appears in \cite{d2n}.  We define the graph $\mathfrak {C}^* (G)$ to be the graph obtained by taking the subgraph of $\mathfrak {C} (G)$ induced by a transversal for $Z (G)$ in $G$.  It is not difficult to see that the graph obtained is independent of the transversal chosen since $a$ and $b$ commute if and only if $a z_1$ and $b z_2$ commute for all $z_1, z_2 \in Z(G)$. 

We will show that for groups of the same order, the commuting graphs are isomorphic if and only if the graphs on the transversals are isomorphic.

\begin{theorem}\label{thm:C=C}
Let $G_1$ and $G_2$ be groups so that $|G_1| = |G_2|$.  Then $\mathfrak C(G_1) \cong \mathfrak C(G_2)$ if and only if $\mathfrak {C}^* (G_1) \cong \mathfrak {C}^* (G_2)$.
\end{theorem}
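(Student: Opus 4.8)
The plan is to prove both implications by reducing everything to a bijection of transversals that is compatible with multiplication by central elements. First, fix transversals $T_i$ for $Z(G_i)$ in $G_i$, chosen so that the identity $1$ represents the coset $Z(G_i)$; then the vertex set of $\mathfrak{C}^*(G_i)$ is $T_i \setminus \{1\}$ and, as noted just before the theorem, the edge relation on $T_i \setminus \{1\}$ does not depend on the choice of $T_i$. The key structural observation is that $\mathfrak{C}(G_i)$ is, up to the central cosets, a ``blow-up'' of $\mathfrak{C}^*(G_i)$: each vertex $t \in T_i \setminus \{1\}$ is replaced by the clique $tZ(G_i)$ of size $|Z(G_i)|$, any two such cliques are completely joined or completely non-joined according to whether $t, t'$ are adjacent in $\mathfrak{C}^*(G_i)$, and within a single coset $tZ(G_i)$ all $|Z(G_i)|$ vertices are mutually adjacent (since $t$ commutes with $tz$ for every $z \in Z(G_i)$).

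For the direction $(\Leftarrow)$, suppose $\psi \colon \mathfrak{C}^*(G_1) \to \mathfrak{C}^*(G_2)$ is a graph isomorphism. By Lemma \ref{lem:C_sizes}, since $|G_1| = |G_2|$ we have $|Z(G_1)| = |Z(G_2)|$; call this common value $z$, and fix for each $t \in T_1 \setminus \{1\}$ a bijection $tZ(G_1) \to \psi(t)Z(G_2)$, together with a bijection $Z(G_1) \to Z(G_2)$. Assembling these gives a bijection $G_1 \to G_2$. I would then check it is a graph isomorphism $\mathfrak{C}(G_1) \to \mathfrak{C}(G_2)$ by the blow-up description: vertices in the same $G_1$-coset go to vertices in the same $G_2$-coset (adjacent on both sides), vertices in distinct non-central cosets $tZ(G_1), t'Z(G_1)$ are adjacent in $\mathfrak{C}(G_1)$ iff $t \sim t'$ in $\mathfrak{C}^*(G_1)$ iff $\psi(t) \sim \psi(t')$ in $\mathfrak{C}^*(G_2)$ iff the images are adjacent in $\mathfrak{C}(G_2)$; and the central coset contributes only isolated (non-vertex) elements on both sides.

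For the direction $(\Rightarrow)$, suppose $\Phi \colon \mathfrak{C}(G_1) \to \mathfrak{C}(G_2)$ is a graph isomorphism. The crucial point is to recover the central cosets graph-theoretically: I claim that for $a \in G_1 \setminus Z(G_1)$ the set $aZ(G_1) \setminus \{a\}$ consists exactly of the vertices $b \in G_1 \setminus Z(G_1)$ with $b \ne a$ such that $b$ and $a$ have the same closed neighborhood in $\mathfrak{C}(G_1)$ — i.e.\ $a,b$ are ``twins''. One inclusion is the blow-up description (elements of a common coset are adjacent to exactly the same vertices). For the reverse inclusion I would show that if $a$ and $b$ are twins then $C_{G_1}(a) = C_{G_1}(b)$, hence by Lemma \ref{star 1}(2) $Z(a) = Z(b)$; combined with $a \in C_{G_1}(b)$ this forces $ab^{-1} \in Z(G_1)$ — here I expect to use that $p$-group-free groups can have ``accidental'' twins only inside a coset, and this is where a short argument (possibly invoking that $a, b$ commute and centralize the same things, so $\langle a, b, Z(G_1)\rangle$ is abelian and $a, b$ lie in a common maximal abelian subgroup $C_{G_1}(a)$ with equal centers) is needed. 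Since twin-equivalence is preserved by $\Phi$, the classes of size $|Z(G_1)|$ (the non-central cosets) map to classes of size $|Z(G_1)|$ in $\mathfrak{C}(G_2)$; by the same characterization these are non-central cosets of $G_2$, and $\Phi$ induces a well-defined bijection of the quotients, which one checks is a graph isomorphism $\mathfrak{C}^*(G_1) \cong \mathfrak{C}^*(G_2)$. The main obstacle is precisely this twin-characterization lemma: establishing that two commuting non-central elements with identical closed neighborhoods must differ by a central element, for which I would lean on Lemma \ref{abel cent} and Lemma \ref{star 1} to pin down that $C_{G_1}(a)$ is maximal abelian and $a, b$ generate the same centralizer.
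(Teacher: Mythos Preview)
Your $(\Leftarrow)$ direction is correct and is essentially the paper's argument: given an isomorphism of the transversal graphs and a bijection of the centers, the ``blow-up'' map $xz \mapsto \psi(x)t(z)$ is a graph isomorphism of the full commuting graphs.

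Your $(\Rightarrow)$ direction, however, rests on a claim that is false. You assert that the closed-twin class of a vertex $a$ in $\mathfrak{C}(G_1)$ is exactly the coset $aZ(G_1)$. One inclusion is fine, but the reverse fails: having $C_{G_1}(a)=C_{G_1}(b)$ does \emph{not} force $ab^{-1}\in Z(G_1)$. For instance, in $G=S_3$ the elements $a=(123)$ and $b=(132)$ satisfy $C_G(a)=C_G(b)=\langle(123)\rangle$, so they are closed twins, yet $Z(G)=1$ and $ab^{-1}\neq 1$. More generally, in any CA-group with an abelian centralizer of index strictly larger than $|Z(G)|$ (e.g.\ the Frobenius groups mentioned after Theorem~\ref{intro thm 1}), the twin class of $a$ is all of $C_G(a)\setminus Z(G)$, which is a union of many $Z(G)$-cosets. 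The argument you sketch---``$C_{G_1}(a)=C_{G_1}(b)$ and $a\in C_{G_1}(b)$, so $ab^{-1}\in Z(G_1)$''---simply does not go through, and neither Lemma~\ref{abel cent} nor Lemma~\ref{star 1} will rescue it.

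The paper sidesteps this entirely. It only uses the \emph{true} inclusion: every twin class $E$ in $\mathfrak{C}(G_1)$ is a union of $Z(G_1)$-cosets, so $|E|$ is a multiple of $m=|Z(G_1)|=|Z(G_2)|$, and hence $E$ and $f(E)$ contain the same number $|E|/m$ of transversal representatives. The key extra ingredient is Lemma~\ref{lem:graph_iso}: within each twin class you may precompose $f$ by an arbitrary permutation and still have a graph isomorphism. One can therefore choose permutations $\sigma_E$ so that the modified isomorphism $f^*$ sends the chosen transversal representatives in $E$ to the chosen transversal representatives in $f(E)$; restricting $f^*$ to the transversal then gives the desired isomorphism $\mathfrak{C}^*(G_1)\cong\mathfrak{C}^*(G_2)$. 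In short, the paper works with twin classes as unions of cosets rather than trying to identify them with single cosets.
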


Let $\Gamma$ be a graph. If $u$ is a vertex of $\Gamma$, then we use $N(u)$ to denote the neighbors of $u$.  I.e., $N(u)$ is the set of vertices in $\Gamma$ that are adjacent to $u$.  We define an equivalence relation on the vertices of $\Gamma$.  We say that $u {\sim} v$ if either $u = v$ or $u$ is adjacent to $v$ and $\{u\} \cup N(u) = \{v\} \cup N(v)$.  (We mention that adjacent vertices $u$ and $v$ having the property that $\{u\} \cup N(u) = \{v\} \cup N(v)$ are known as {\it clones} or {\it true twins} or {\it closed twins} in the literature.)  We can then form the quotient graph $\Gamma/{\sim}$.  The vertices of this graph are the equivalence classes under ${\sim}$.  If $[u]$ and $[v]$ are the equivalence classes of $u$ and $v$, then $[u]$ and $[v]$ are adjacent in $\Gamma/{\sim}$ if and only if $u$ and $v$ are adjacent in $\Gamma$.  Observe that ${\sim}$ is uniquely determined by $\Gamma$.  Hence, if $\Gamma$ and $\Delta$ are isomorphic graphs, then $\Gamma/{\sim}$ and $\Delta/{\sim}$ will be isomorphic. 

Before we prove Theorem \ref{thm:C=C}, we need a lemma that allows us to deal with twin vertices.  Intuitively, this lemma establishes that an isomorphism between graphs is ``free'' on twin classes, in the sense that it can take on arbitrary values within a twin class.

\begin{lemma}\label{lem:graph_iso}
Given a graph $\Gamma$, let ${\sim}$ be the equivalence relation induced by closed twins.  For each equivalence class $E \in {\sim}$, let $S_{E}$ be the symmetric group on the set $E$ and let $\sigma_E \in S_{E}$ be arbitrary.  If $f$ is a graph isomorphism from $\Gamma_1$ to $\Gamma_2$, then the map $f^*$ defined by $f^*(x) = f(\sigma_{E}(x))$ for $x$ in class $E$ of $\Gamma_1$ is a graph isomorphism from $\Gamma_1$ to $\Gamma_2$. 
\end{lemma}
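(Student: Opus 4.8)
The plan is to verify directly that $f^*$ is a well-defined bijection $\Gamma_1 \to \Gamma_2$ that preserves and reflects adjacency. The key observation to set up first is that if $E$ is a closed-twin class in $\Gamma_1$, then $f(E)$ is a closed-twin class in $\Gamma_2$: since $f$ is an isomorphism, vertices $u,v$ satisfy $\{u\}\cup N(u) = \{v\}\cup N(v)$ in $\Gamma_1$ if and only if $\{f(u)\}\cup N(f(u)) = \{f(v)\}\cup N(f(v))$ in $\Gamma_2$, and $u=v$ iff $f(u)=f(v)$; hence the relation ${\sim}$ is carried bijectively by $f$ onto the relation ${\sim}$ on $\Gamma_2$, and $f$ restricts to a bijection between each class $E$ and the class $f(E)$. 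In particular $\sigma_E$ is a permutation of $E$, so $f \circ \sigma_E$ maps $E$ bijectively onto $f(E)$, and since the classes partition the vertex sets, $f^*$ is a bijection from $V(\Gamma_1)$ to $V(\Gamma_2)$.

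Next I would check adjacency preservation. Let $x,y$ be distinct vertices of $\Gamma_1$, lying in classes $E$ and $F$ respectively. Set $x' = \sigma_E(x) \in E$ and $y' = \sigma_F(y) \in F$, so that $f^*(x) = f(x')$ and $f^*(y) = f(y')$. The point is that adjacency between a vertex of $E$ and a vertex of $F$ depends only on the classes $E$ and $F$, not on the representatives: if $E = F$ then all distinct members of $E$ are pairwise adjacent (a closed-twin class of size $\ge 2$ is a clique, since $u {\sim} v$ with $u \ne v$ forces $u$ adjacent to $v$), so $x \sim_{\Gamma_1} y$ and also $x' \sim_{\Gamma_1} y'$ (note $x' \ne y'$ as $\sigma_E$ is injective and $x \ne y$); if $E \ne F$, pick any $e \in E$, and then $y$ is adjacent to $e$ iff $y \in N(e)$ iff $y' \in N(e)$ (because $y \sim y'$ means $\{y\}\cup N(y) = \{y'\}\cup N(y')$, and membership in this common set is what detects adjacency to the outside vertex $e$, using $y, y' \notin \{e\} \cup$ nothing problematic since $e \notin F$); then swapping roles, $x$ adjacent to $y$ iff $x \in N(y)$ iff $x' \in N(y)$ iff $x'$ adjacent to $y$ iff (same argument again) $x'$ adjacent to $y'$. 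So in all cases $x \sim_{\Gamma_1} y \iff x' \sim_{\Gamma_1} y'$, and applying the isomorphism $f$ gives $x' \sim_{\Gamma_1} y' \iff f(x') \sim_{\Gamma_2} f(y')$, i.e. $x \sim_{\Gamma_1} y \iff f^*(x) \sim_{\Gamma_2} f^*(y)$. Hence $f^*$ is a graph isomorphism.

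The only real subtlety — and the step I would be most careful with — is the claim that adjacency to an outside vertex is invariant under ${\sim}$, i.e. that $u {\sim} v$ implies $N(u) \setminus \{u,v\} = N(v)\setminus\{u,v\}$. This is immediate from $\{u\}\cup N(u) = \{v\}\cup N(v)$: intersecting both sides with the complement of $\{u,v\}$ gives $N(u)\setminus\{u,v\} = N(v)\setminus\{u,v\}$. One just has to be slightly attentive to the degenerate possibility $x' = y'$ or to $x, y$ lying in a common class (handled above by noting twin classes are cliques), and to the fact that $x = y$ is excluded by hypothesis so $f^*(x) \ne f^*(y)$ by bijectivity. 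None of this requires a case analysis beyond what is sketched, and the argument is purely combinatorial, using nothing about the commuting graph.
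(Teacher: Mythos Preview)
Your proof is correct and follows essentially the same approach as the paper: both arguments reduce to the observation that adjacency between $x$ and $y$ depends only on their twin classes, so that $x$ is adjacent to $y$ iff $\sigma_E(x)$ is adjacent to $\sigma_F(y)$, and then apply $f$. Your version is simply more detailed---you explicitly verify bijectivity of $f^*$ and split the adjacency check into the cases $E=F$ and $E\ne F$---whereas the paper compresses all of this into the single sentence ``$x$ and $y$ are adjacent if and only if a member of $E$ is adjacent to a member of $F$.''
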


\begin{proof}
Note that $x$ and $y$ are adjacent in $\Gamma_1$ if and only if $\sigma_E(x)$ and $\sigma_F(y)$ are adjacent in $\Gamma_1$, where $E$ is the twin class of $x$ and $F$ is the twin class of $y$.  This is true since $x$ and $y$ are adjacent if and only if a member of $E$ is adjacent to a member of $F$.

Hence $x$ and $y$ are adjacent in $\Gamma_1$ if and only if $\sigma_E(x)$ and $\sigma_F(y)$ are adjacent in $\Gamma_1$ if and only if $f(\sigma_E(x))$ and $f(\sigma_F(y))$ are adjacent in $\Gamma_2$ if and only if $f^*(x)$ and $f^*(y)$ are adjacent in $\Gamma_2$.
\end{proof}

We are now ready to prove Theorem \ref{thm:C=C}.

\begin{proof}[Proof of Theorem \ref{thm:C=C}]
Let $f$ be an isomorphism from $\mathfrak{C}(G_1)$ to $\mathfrak{C}(G_2)$.  Note that $E$ is a twin class in $\mathfrak{C}(G_1)$ if and only if $f(E)$ is a twin class in $\mathfrak{C}(G_2)$.  

By Lemma \ref{lem:C_sizes}, we see that $\mathfrak{C} (G_1) \cong \mathfrak{C} (G_2)$ and $|G_1| = |G_2|$ imply that $|Z(G_1)| = |Z(G_2)|=m$.  Now twin classes $E$ and $f(E)$ have the same size, and this size is a multiple of $m$ since $x$ and $xz$ are twins for any element $z$ in the center.  It follows that the number of elements in $E$ that represent distinct transversal classes is equal to the number of such elements in $f(E)$.  Let $t$ be a bijection from the vertices of $\mathfrak{C}^*(G_2)$ in $f(E)$ to the vertices of $\mathfrak{C}^*(G_1)$ in $E$.  One can then take the $1$-to-$1$ map $t(b) \mapsto f^{-1}(b)$ and extend it to a permutation $\sigma_E$ on all of $E$.

Build such $\sigma_E$'s for all twin classes $E$, and by Lemma \ref{lem:graph_iso}, we form the map $f^*$ which is an isomorphism from $\mathfrak{C}(G_1)$ to $\mathfrak{C}(G_2)$.

Restrict $f^*$ to obtain a map from $\mathfrak {C}^* (G_1)$ to $\mathfrak {C}^* (G_2)$.  Observe that $(a_i,a_j)$ is an edge in $\mathfrak {C}^* (G_1)$ if and only if $(a_i,a_j)$ is an edge in $\mathfrak C(G_1)$.  Now, $(a_i,a_j)$ is an edge in $\mathfrak C(G_1)$ if and only if $(f^* (a_i),f^*(a_j))$ is an edge in $\mathfrak C(G_2)$.  Finally, we see that $(f^* (a_i), f^* (a_j))$ is an edge in $\mathfrak C(G_2)$ if and only if $(f^* (a_i), f^* (a_j))$ is an edge in $\mathfrak {C}^* (G_2)$.  Hence if $\mathfrak C(G_1) \cong \mathfrak C(G_2)$ and $G_1$ and $G_2$ satisfy any of the equivalent conditions of Lemma \ref{lem:C_sizes}, then $\mathfrak {C}^* (G_1) \cong \mathfrak {C}^* (G_2)$. 

Conversely, let $f$ be an isomorphism from $\mathfrak{C}^*(G_1)$ to $\mathfrak{C}^*(G_2)$.  We are assuming $|G_1| = |G_2|$ and observe that $\mathfrak{C}^* (G_1) \cong \mathfrak{C}^* (G_2)$ implies that $|G_1:Z(G_1)| = |G_2:Z(G_2)|$, and it follows that $|Z(G_1)| = |Z(G_2)|$. Let $t$ be a bijection from $Z(G_1)$ to $Z(G_2)$. Define a map $g$ from $\mathfrak{C}(G_1)$ to $\mathfrak{C}(G_2)$ by $g(xz) = f(x)t(z)$ for every vertex $x$ in $\mathfrak{C}^*(G_1)$ and for every $z \in Z(G_1)$.  Let $x,y$ be vertices of $\mathfrak{C}^*(G_1)$ and $z_1,z_2 \in Z(G_1)$.  Observe that $(xz_1,yz_2)$ is an edge in $\mathfrak {C} (G_1)$ if and only if either $x=y$ and $z_1 \neq z_2$ or $(x,y)$ is an edge in $\mathfrak {C}^*(G_1)$ if and only if either $f(x)=f(y)$ and $z_1 \neq z_2$ or $(f(x),f(y))$ is an edge in $\mathfrak {C}^*(G_2)$ if and only if $(f(x)t(z_1),f(y)t(z_2))$ is an edge in $\mathfrak {C}(G_2)$.  Hence if $\mathfrak {C}^* (G_1) \cong \mathfrak {C}^* (G_2)$ and $|G_1| = |G_2|$, then $\mathfrak C (G_1) \cong \mathfrak C (G_2)$. 
\end{proof}



We now develop the relationship between isoclinic groups and $\mathcal{C}$.  Following the literature, we say that $G_1$ and $G_2$ are {\it isoclinic} if there exist isomorphisms $$\alpha : G_1/Z(G_1) \rightarrow G_2/Z(G_2) {\rm ~~~~ and~~~~} \beta : G_1' \rightarrow G_2'$$ so that $$\beta ([g,h]) = [\alpha(g Z(G_1)), \alpha(h Z(G_1))]$$ for all $g,h \in G_1$.  If one is not used to working with isoclinisms, this last equation deserves an explanation.  The point is that the commutator $[\alpha (gZ (G_1)),\alpha (hZ(G_1))]$ has the same value no matter what element in the coset $Z(G_2)$ is used.  Hence, we obtain a well-defined element in $(G_2)'$, and this equation really does make sense.  

Suppose $G_1$ and $G_2$ are isoclinic.  Then it is easy to see that $\alpha$ will map a transversal for $Z(G_1)$ in $G_1$ to a transversal for $Z(G_2)$ in $G_2$, and the commutator condition will imply that a pair of cosets in $G_1$ commutes if and only if the corresponding pair of cosets in $G_2$ commute.  As we were making final preparations of this paper, we came across \cite{camsurv2} and saw that in Proposition 5.1 of that paper, they prove a result similar to our Theorem \ref{intro thm 1}.  However, our argument is more detailed and since we wanted to use our proof and the related arguments in the rest of the paper, we decided to retain our proof.

We will now prove the following:

\begin{theorem} \label{isoclinic}
If $G_1$ and $G_2$ are isoclinic groups, then $\mathfrak {C}^* (G_1) \cong \mathfrak {C}^* (G_2)$.
\end{theorem}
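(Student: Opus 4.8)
The plan is to construct an explicit isomorphism $\mathfrak{C}^*(G_1) \to \mathfrak{C}^*(G_2)$ directly from the isoclinism data, exactly along the lines sketched in the paragraph preceding the statement. Recall that $\mathfrak{C}^*(G_i)$ is (up to the independence-of-transversal observation already recorded) the graph on a transversal for $Z(G_i)$ in $G_i$, with an edge between two transversal elements precisely when they commute. So the first step is to fix a transversal $T_1$ for $Z(G_1)$ in $G_1$ and use the isomorphism $\alpha : G_1/Z(G_1) \to G_2/Z(G_2)$ to produce a transversal $T_2$ for $Z(G_2)$ in $G_2$: for each $t \in T_1$ choose a representative $\bar{t} \in G_2$ of the coset $\alpha(tZ(G_1))$, and set $T_2 = \{\bar{t} : t \in T_1\}$. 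Since $\alpha$ is a bijection on the quotient groups, $T_2$ is indeed a transversal for $Z(G_2)$, and $t \mapsto \bar{t}$ is a bijection $T_1 \to T_2$, hence a bijection on the vertex sets of the two graphs.

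Next I would check that this vertex bijection respects the edge relations, and for this the key observation is that whether two elements commute is detected by their images in the quotient together with the commutator map. Concretely, for $a, b \in G_i$ we have $ab = ba$ if and only if $[a,b] = 1$, and $[a,b]$ depends only on the cosets $aZ(G_i)$ and $bZ(G_i)$ — this is precisely the well-definedness point the authors emphasize. So for $s, t \in T_1$ with $s \neq t$: $s$ and $t$ are adjacent in $\mathfrak{C}^*(G_1)$ iff $[s,t] = 1$ in $G_1'$, iff (applying the isomorphism $\beta : G_1' \to G_2'$, which sends $1$ to $1$) $\beta([s,t]) = 1$, iff by the isoclinism identity $[\alpha(sZ(G_1)), \alpha(tZ(G_1))] = 1$, iff $[\bar{s},\bar{t}] = 1$ in $G_2'$, iff $\bar{s}$ and $\bar{t}$ are adjacent in $\mathfrak{C}^*(G_2)$. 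One small thing to verify along the way is that $s \neq t$ in $T_1$ forces $\bar{s} \neq \bar{t}$ in $T_2$ and vice versa — this is immediate since $s \mapsto \bar{s}$ is a bijection and both $T_1$, $T_2$ are transversals (distinct transversal elements lie in distinct cosets). Also one should note that neither $s$ nor $t$ lies in $Z(G_1)$ unless it is the identity coset representative, and correspondingly $\bar{s}, \bar{t} \notin Z(G_2)$, so the vertex sets really do match up (the identity coset is excluded from both graphs, or handled as the single trivial transversal element that is an isolated vertex on both sides).

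I do not expect a serious obstacle here; the theorem is essentially a formal consequence of the definition of isoclinism, and the authors have already spelled out the idea in prose. The only mildly delicate point is bookkeeping about representatives: one must make the choices of $\bar{t}$ once and for all and then consistently use the fact that the commutator is coset-independent, rather than element-dependent. If one wanted, one could instead invoke Theorem \ref{thm:C=C}, but that requires $|G_1| = |G_2|$, which isoclinic groups need not satisfy, so a direct argument at the level of $\mathfrak{C}^*$ is the right approach. I would therefore present the proof as: (i) build $T_2$ and the bijection from $\alpha$; (ii) verify it is a vertex bijection of the two $\mathfrak{C}^*$-graphs; (iii) run the "iff" chain above using $\beta$ and the isoclinism identity to conclude it is a graph isomorphism.
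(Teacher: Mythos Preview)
Your proposal is correct and follows essentially the same approach as the paper: both construct the bijection between transversals via $\alpha$ and then use the isoclinism identity to show that commuting is preserved. Your version is a bit more explicit in invoking $\beta$ and the chain of ``iff''s, whereas the paper condenses this to the single assertion that $a_i,a_j$ commute iff $b_i,b_j$ commute, but the content is identical.
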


\begin{proof}
Let $\alpha : G_1/Z(G_1) \rightarrow G_2/Z (G_2)$ be the isomorphism from our isoclinism.  Let $a_1, \dots, a_n$ be the transversal for $Z(G_1)$ in $G_1$ given by $\mathfrak{C}^*(G_1)$, where the representative for the center is removed, and so $n = |G_1 : Z(G_1)| - 1$.  Order the transversal for $Z(G_2)$ in $G_2$ given by $\mathfrak{C}^*(G_2)$ so that $b_i \in \alpha (a_i Z(G_1))$.  Define $\phi : \mathfrak {C}^* (G_1) \rightarrow \mathfrak {C}^* (G_2)$ by $\phi (a_i) = b_i$.  We see that there is an edge in $\mathfrak {C}^* (G_1)$ for $(a_i,a_j)$ if and only if $a_i$ and $a_j$ commute.  Since $G_1$ and $G_2$ are isoclinic, we see that $a_i$ and $a_j$ commute if and only if $b_i$ and $b_j$ commute.  Also, $b_i$ and $b_j$ commute if and only if $(b_i,b_j)$ is an edge in $\mathfrak {C}^* (G_2)$.  It follows that $(\phi (a_i),\phi (a_j))$ will be an edge if and only if $(a_i,a_j)$ is an edge.  Therefore, we conclude that $\phi$ is a graph isomorphism.	
\end{proof}


To close this section, we prove that isoclinic groups of the same order have isomorphic commuting graphs.   As we said above, this appears as Proposition 5.1 in \cite{camsurv2}.

\begin{theorem} \label{intro thm 1}
If $G_1$ and $G_2$ are isoclinic groups with $|G_1| = |G_2|$, then $\mathfrak{C} (G_1) \cong \mathfrak{C} (G_2)$.	
\end{theorem}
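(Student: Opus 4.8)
The plan is to combine the two results just proved with the lemma on graph sizes. First I would invoke Theorem~\ref{isoclinic} to conclude that $\mathfrak{C}^*(G_1) \cong \mathfrak{C}^*(G_2)$, using only that $G_1$ and $G_2$ are isoclinic. This gives the ``transversal-level'' isomorphism for free and does not require the order hypothesis.

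Next I would bring in the extra hypothesis $|G_1| = |G_2|$. Since $G_1$ and $G_2$ are isoclinic, they have $|G_i/Z(G_i)|$ equal (via the isomorphism $\alpha$); combined with $|G_1| = |G_2|$ this forces $|Z(G_1)| = |Z(G_2)|$. (Alternatively, one notes that $\mathfrak{C}^*(G_1)\cong \mathfrak{C}^*(G_2)$ already gives $|G_1:Z(G_1)| = |G_2:Z(G_2)|$, and then $|G_1|=|G_2|$ gives $|Z(G_1)|=|Z(G_2)|$, which is exactly one of the equivalent conditions of Lemma~\ref{lem:C_sizes}.) Either way, all three equivalent conditions of Lemma~\ref{lem:C_sizes} hold.

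Finally I would apply the direction of Theorem~\ref{thm:C=C} that goes from $\mathfrak{C}^*$ to $\mathfrak{C}$: since $|G_1| = |G_2|$ and $\mathfrak{C}^*(G_1) \cong \mathfrak{C}^*(G_2)$, we get $\mathfrak{C}(G_1) \cong \mathfrak{C}(G_2)$, which is the desired conclusion. The whole argument is essentially a two-line concatenation: isoclinism $\Rightarrow$ $\mathfrak{C}^*$-isomorphism (Theorem~\ref{isoclinic}), plus equal order $\Rightarrow$ lift to $\mathfrak{C}$-isomorphism (Theorem~\ref{thm:C=C}).

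There is no real obstacle here; the only point requiring a moment's care is making sure the order hypothesis is actually used where needed — Theorem~\ref{thm:C=C} requires $|G_1| = |G_2|$ as an explicit hypothesis, and without it an isoclinism between groups of different orders would only give isomorphic $\mathfrak{C}^*$ but genuinely nonisomorphic $\mathfrak{C}$ (the centers, hence the twin-class sizes, would differ). So the proof should state clearly that $|G_1| = |G_2|$ is exactly the ingredient feeding Theorem~\ref{thm:C=C}. One could also spell out the direct reason $|Z(G_1)| = |Z(G_2)|$ via $\alpha$ rather than routing through Lemma~\ref{lem:C_sizes}, but since Theorem~\ref{thm:C=C} is stated purely in terms of $|G_1| = |G_2|$, invoking it directly is cleanest.
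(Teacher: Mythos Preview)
Your proposal is correct and follows exactly the same route as the paper: apply Theorem~\ref{isoclinic} to get $\mathfrak{C}^*(G_1)\cong\mathfrak{C}^*(G_2)$, then invoke Theorem~\ref{thm:C=C} with the hypothesis $|G_1|=|G_2|$ to lift this to $\mathfrak{C}(G_1)\cong\mathfrak{C}(G_2)$. The paper's proof is literally these two sentences; your additional remarks about where the order hypothesis enters are accurate but not needed for the argument itself.
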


\begin{proof}
Suppose that $G_1$ and $G_2$ are isoclinic with $|G_1| = |G_2|$.  By Theorem \ref{isoclinic}, we see that $\mathfrak{C}^* (G_1) \cong \mathfrak{C}^* (G_2)$.   Since $|G_1| = |G_2|$, we may use Theorem \ref{thm:C=C} to see that $\mathfrak{C} (G_1) \cong \mathfrak{C} (G_2)$.   
\end{proof}

It is noted in \cite{camsurv2} that the converse of Theorem \ref{intro thm 1} is false.  A counterexample is given in Section 6 of that paper.  However, it is conjectured in \cite{ArCa} that it is true for groups of nilpotence class $2$.  We want to give some non-nilpotent examples.  We also obtain counterexamples by considering Frobenius groups of order $p^2 q$ where $p$ and $q$ are primes so that $q$ divides $p + 1$.  In both cases, the Frobenius kernel has order $p^2$ and a Frobenius complement is cyclic of order $q$.  In the first case, take the Frobenius kernel to be cyclic and in the second case, take the Frobenius kernel to be elementary abelian.  It is not difficult to see that these groups are not isolinic but that their commuting graphs are isomorphic.

\section{Centralizer Graphs}

Next, we introduce another graph that we will show is closely related to the commuting graph.

Let $G$ be a group.  We now define the {\it centralizer graph}, $\Gamma_{\mathcal Z} (G)$, to be the graph with vertices $\{ Z \in {\mathcal Z} (G) \}$.  If $Z_1, Z_2 \in {\mathcal Z} (G)$ with $Z_1 \ne Z_2$, then there is an edge between $Z_1$ and $Z_2$ precisely when $Z_2 \le C_G (Z_1)$.  Notice via Lemma \ref{three} that $Z_2 \le C_G (Z_1)$ if and only if $Z_1 \le C_G (Z_2)$.  Hence, it makes sense to think of $\Gamma_{\mathcal Z} (G)$ as an undirected graph.  Recall that ${\mathcal C} (G)$ is in bijection with ${\mathcal Z} (G)$, so we could have used $\{ C \in {\mathcal C} (G) \}$ for our vertex set.  There may be some times where this identification with the vertices will be preferable. 

We next show that $\Gamma_{\mathcal Z} (G)$ can be obtained from either $\mathfrak C (G)$ and $\mathfrak {C}^*(G)$ via the closed twin equivalence relation.

\begin{lemma}\label{tilde}
Let $G$ be a group.  Then the map $Z(g) \mapsto [g]$ is a graph isomorphism from $\Gamma_{\mathcal Z} (G)$ to $\mathfrak C(G)/{\sim}$ or $\mathfrak {C}^* (G)/{\sim}$.
\end{lemma}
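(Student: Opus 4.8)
The plan is to exhibit the map $Z(g) \mapsto [g]$ explicitly and check that it is well-defined, a bijection, and edge-preserving in both directions, handling $\mathfrak{C}(G)$ and $\mathfrak{C}^*(G)$ simultaneously (since $\mathfrak{C}^*(G)$ is an induced subgraph of $\mathfrak{C}(G)$ on a transversal and $z$ and $gz$ are always closed twins for $z \in Z(G)$, the two quotients coincide). The key observation driving everything is Lemma \ref{star 1}: for $g, h \in G \setminus Z(G)$ we have $C_G(g) = C_G(h)$ if and only if $Z(g) = Z(h)$. Combined with the fact that $g$ and $h$ are closed twins in $\mathfrak{C}(G)$ precisely when $C_G(g) = C_G(h)$, this tells us the twin class $[g]$ is determined by, and determines, $Z(g)$; so $Z(g) \mapsto [g]$ is a well-defined bijection from ${\mathcal Z}(G)$ onto the vertex set of $\mathfrak{C}(G)/{\sim}$.

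First I would verify the twin-class characterization: if $C_G(g) = C_G(h)$ then $\{g\} \cup N(g) = \{h\} \cup N(h)$ in $\mathfrak{C}(G)$, because the closed neighborhood of $g$ is exactly $C_G(g) \setminus Z(G)$ (an element $x \neq g$ commutes with $g$ iff $x \in C_G(g)$, and $g \in C_G(g)$); conversely if $g$ and $h$ are closed twins then, as each is in its own closed neighborhood and hence in the other's, $h \in C_G(g)$ and $g \in C_G(h)$, and the equality of closed neighborhoods forces $C_G(g) \setminus Z(G) = C_G(h) \setminus Z(G)$, hence $C_G(g) = C_G(h)$ since both contain $Z(G)$. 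By Lemma \ref{star 1}(2) this says $[g] = [h]$ in $\mathfrak{C}(G)/{\sim}$ if and only if $Z(g) = Z(h)$, giving a well-defined injection ${\mathcal Z}(G) \to \mathfrak{C}(G)/{\sim}$; it is surjective since every vertex of $\mathfrak{C}(G)/{\sim}$ is $[g]$ for some $g \in G \setminus Z(G)$.

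Next I would check the map is edge-preserving. By definition of $\Gamma_{\mathcal Z}(G)$, distinct vertices $Z_1 = Z(g)$ and $Z_2 = Z(h)$ are adjacent iff $Z(h) \le C_G(Z(g))$, which by Lemma \ref{cent equality} equals $C_G(g)$, i.e. iff $Z(h) \le C_G(g)$; by Lemma \ref{three}(1) this holds iff $h \in C_G(g)$, i.e. iff $g$ and $h$ commute. On the graph side, $[g]$ and $[h]$ are adjacent in $\mathfrak{C}(G)/{\sim}$ (with $[g] \neq [h]$) iff $g$ and $h$ are adjacent in $\mathfrak{C}(G)$, i.e. iff $g$ and $h$ commute. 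So the two adjacency conditions match, and the map is a graph isomorphism onto $\mathfrak{C}(G)/{\sim}$. For $\mathfrak{C}^*(G)/{\sim}$, I would note that the closed twin relation on $\mathfrak{C}^*(G)$ collapses exactly the transversal representatives lying in a common $C_G(g)$-coset structure, and the restriction of the identification above to a transversal gives the same quotient graph — this is really the only place a small amount of care is needed.

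The main obstacle I anticipate is purely bookkeeping: making the identification of $\mathfrak{C}(G)/{\sim}$ with $\mathfrak{C}^*(G)/{\sim}$ fully rigorous, i.e. confirming that passing to a transversal for $Z(G)$ before quotienting by closed twins yields the same graph as quotienting $\mathfrak{C}(G)$ directly. This follows because each closed twin class of $\mathfrak{C}(G)$ is a union of $Z(G)$-cosets (as $g \sim gz$ for $z \in Z(G)$) and meets any transversal in a nonempty set that forms a single closed twin class of $\mathfrak{C}^*(G)$; the adjacency relations on the classes are inherited identically. No genuinely hard step is expected — the content is entirely in Lemmas \ref{three}, \ref{cent equality}, and \ref{star 1}, which are already available.
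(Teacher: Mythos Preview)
Your proposal is correct and follows essentially the same route as the paper: identify the closed neighborhood of $g$ in $\mathfrak{C}(G)$ with $C_G(g)\setminus Z(G)$, deduce that $[g]=[h]$ iff $C_G(g)=C_G(h)$ iff $Z(g)=Z(h)$ (via Lemma~\ref{star 1}/Corollary~\ref{lem:corres}), and then match edges by observing both adjacency conditions reduce to $h\in C_G(g)$. The only cosmetic difference is that for $\mathfrak{C}^*(G)/{\sim}$ you argue the quotient coincides with $\mathfrak{C}(G)/{\sim}$ via the $Z(G)$-coset structure of twin classes, whereas the paper simply reruns the same argument on a transversal; both are fine.
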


\begin{proof}
Consider $g \in G \setminus Z(G)$ and view $g$ as a vertex in $\mathfrak C (G)$.  Note that $x \in G \setminus Z(G)$ will be a neighbor of $g$ if and only if $x \in C_G (g)$.  It follows that $C_G (g) \setminus Z(G) = N(g) \cup \{ g \}$.  Now when $h \in G \setminus Z(G)$, then $g {\sim} h$ as vertices in $C(G)$ if and only if $C_G (g) = C_G (h)$.  We can define a map from the vertices of $\mathfrak C(G)/{\sim}$ to the vertices of $\Gamma_{\mathcal Z} (G)$ by $[g] \mapsto Z(g)$ and by Corollary \ref{lem:corres}, this map will be a bijection.  Suppose $[g]$ and $[h]$ are two distinct vertices in $\mathfrak C(G)/{\sim}$.  We know there is an edge between these vertices if and only if $gh = hg$.  This occurs if and only if $h \in C_G (g)$.  On the other hand, there is an edge between $Z(g)$ and $Z(h)$ if and only if $h \in C_G(g)$.  Thus, the function we have defined will be a bijection on edges, and so, we have the graph isomorphism we desire from $\mathfrak{C} (G)/{\sim}$ to $ \Gamma_{\mathcal{Z}} (G)$.  

Let $a_1, \dots, a_n$ be a transversal for $Z(G)$ in $G$.  We see that $a_i$ and $a_j$ are neighbors if and only if $a_j \in C_G (a_i)$.  Also, $a_i {\sim} a_j$ if and only if $C_G (a_i) = C_G(a_j)$.  Hence, the rest of the argument for $\mathfrak {C}^* (G)$ is similar enough to the argument for $\mathfrak{C} (G)$ that we do not write it here.
\end{proof}

We next have a purely graph theoretic result about quotient graphs.  

\begin{lemma}\label{bijection}
Let $\Gamma$ be a graph.  Then there is a bijection between the connected components of $\Gamma$ and the connected components of $\Gamma/{\sim}$.  Connected components of diameter $1$ in $\Gamma$ get mapped to isolated vertices in $\Gamma/{\sim}$.  Otherwise, this bijection preserves the diameter of the connected components.
\end{lemma}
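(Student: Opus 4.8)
The plan is to study the canonical quotient map $q\colon \Gamma \to \Gamma/{\sim}$ given by $q(u)=[u]$ and to read off the statement from two preliminary facts about the twin relation. First, any two members of a single ${\sim}$-class are adjacent, so each class is contained in one connected component of $\Gamma$; consequently a walk in $\Gamma$ projects under $q$ to a walk in $\Gamma/{\sim}$ (each edge either survives or collapses to a single vertex). Second, for $[u]\neq[v]$ the classes $[u]$ and $[v]$ are adjacent in $\Gamma/{\sim}$ exactly when $u$ and $v$ are adjacent in $\Gamma$; I would include a one-line verification that this criterion is well defined, using $\{u\}\cup N(u)=\{u'\}\cup N(u')$ when $u{\sim}u'$ (if $u$ is adjacent to $v$ and $[v]\neq[u]$, then $v\in N(u')$). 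In particular every edge of $\Gamma/{\sim}$ is the image of an edge of $\Gamma$, so walks also lift.

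With these in hand the bijection on connected components is immediate. If $u,v$ lie in one component of $\Gamma$, projecting a walk shows $[u],[v]$ lie in one component of $\Gamma/{\sim}$. Conversely, given a walk $[w_0],\dots,[w_k]$ in $\Gamma/{\sim}$, the vertices $w_0,\dots,w_k$ form a walk in $\Gamma$, and any representative $u$ of $[w_0]$ satisfies $u=w_0$ or $u{\sim}w_0$ (hence $u$ adjacent to $w_0$), so all representatives occurring lie in one component of $\Gamma$. Thus $C\mapsto q(C)$ is the asserted bijection: each $q(C)$ is exactly the component of $[u]$ in $\Gamma/{\sim}$ for any $u\in C$ (it is connected, contains $[u]$, and absorbs anything connected to it), which gives well-definedness and surjectivity; injectivity holds because distinct components of $\Gamma$ are disjoint, and if $[u]=[v]$ with $u,v$ in different components then $u{\sim}v$ would force a common component, a contradiction.

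For the diameter claim I would split on $\operatorname{diam}(C)$. If $C$ is a single vertex ($\operatorname{diam}(C)=0$) or a complete graph on at least two vertices ($\operatorname{diam}(C)=1$), then every vertex of $C$ has closed neighborhood equal to $C$, so $C$ is a single ${\sim}$-class and $q(C)$ is a single vertex, isolated in $\Gamma/{\sim}$. If $\operatorname{diam}(C)=d\ge 2$, the crux is the identity $d_{\Gamma/{\sim}}([u],[v])=d_{\Gamma}(u,v)$ whenever $[u]\neq[v]$. The inequality ``$\le$'' follows by projecting a geodesic. For ``$\ge$'', take a shortest $[u]$--$[v]$ path $[w_0],\dots,[w_k]$ in $\Gamma/{\sim}$ with $k\ge 2$: if $u\neq w_0$ then $u{\sim}w_0$, and since $w_1\in N(w_0)$ while $w_1\notin[w_0]$ we get $u$ adjacent to $w_1$, so $w_0$ may be dropped; symmetrically $w_k$ may be dropped at the other end, yielding a $u$--$v$ walk of length $k$ (the cases $k\le 1$ are trivial since $k=0$ is excluded and $k=1$ gives $u$ adjacent to $v$). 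Feeding this identity to a pair $u,v\in C$ realizing $d_\Gamma(u,v)=d\ge 2$ — which forces $[u]\neq[v]$ — produces a pair of $q(C)$ at distance $d$, while the same identity bounds all distances in $q(C)$ by $d$; hence $\operatorname{diam}(q(C))=d$.

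The step I expect to require the most care is the ``$\ge$'' direction of the distance identity: one must check that after the two splicings the result is genuinely a walk of length exactly $k$ — in particular that $u\neq w_1$ and $v\neq w_{k-1}$, which hold because the chosen path in $\Gamma/{\sim}$ repeats no class — and one must separately and explicitly dispose of the small cases $k=0$ and $k=1$. Everything else is routine bookkeeping once the two preliminary facts about ${\sim}$ are established.
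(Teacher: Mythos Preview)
Your proof is correct and follows essentially the same approach as the paper's: both arguments rest on the fact that walks in $\Gamma$ and in $\Gamma/{\sim}$ correspond under the quotient map, and both deduce the diameter preservation from analyzing shortest paths. The paper is much terser --- it simply asserts that a path $u_0\leftrightarrow\cdots\leftrightarrow u_n$ in $\Gamma$ holds iff $[u_0]\leftrightarrow\cdots\leftrightarrow[u_n]$ is a path in $\Gamma/{\sim}$, and for diameter it observes that on a geodesic of length $\ge 2$ the vertices fall into distinct ${\sim}$-classes --- whereas you spell out well-definedness, the endpoint splicing, and the small cases explicitly; but the underlying idea is the same.
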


\begin{proof}
Suppose $u$ and $v$ are vertices in $\Gamma$ and use $\leftrightarrow$ to denote edges in either $\Gamma$ or $\Gamma/\sim$.  Then it is not difficult to see that $u = u_0 \leftrightarrow u_1 \leftrightarrow \dots \leftrightarrow u_n = v$ is a path in $\Gamma$ if and only if $[u] = [u_0] \leftrightarrow [u_1] \leftrightarrow \dots \leftrightarrow [u_n] = [v]$ is a path in $\Gamma/\sim$.  Thus, we will have a bijection between connected components of the two graphs.  If a connected component of $\Gamma$ is a complete graph, then the corresponding connected component in $\Gamma/\sim$ will be an isolated vertex.  Otherwise, there will be two points whose distance is at least two, and the vertices in the shortest path between the two points will yield distinct equivalence classes, so the shortest path in the quotient will have the same length. 	
\end{proof}

We now apply this lemma.

\begin{corollary} \label{replacement}
Let $G$ be a group.  Then the following are true:
\begin{enumerate}
\item $\mathfrak C(G)/{\sim} \cong \mathfrak {C}^* (G)/{\sim}$.
\item There is a bijection between the connected components of $\mathfrak{C} (G)$ and $\Gamma_{\mathcal Z} (G)$.  This bijection maps connected components that are complete graphs to isolated vertices and preserves the diameters of all other connected components.  In particular, any connected component of $\Gamma_{\mathcal{Z}} (G)$ that is complete (i.e. is a clique) will be an isolated vertex.
\end{enumerate}
\end{corollary}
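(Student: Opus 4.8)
The plan is simply to assemble the two preceding lemmas, so both parts are quick. For part (1), recall that Lemma \ref{tilde} asserts that the single assignment $Z(g) \mapsto [g]$ gives a graph isomorphism from $\Gamma_{\mathcal Z}(G)$ onto $\mathfrak C(G)/{\sim}$ and also onto $\mathfrak C^*(G)/{\sim}$. Composing one of these isomorphisms with the inverse of the other yields a graph isomorphism $\mathfrak C(G)/{\sim} \cong \mathfrak C^*(G)/{\sim}$, which is exactly (1).

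For part (2), I would apply Lemma \ref{bijection} with $\Gamma = \mathfrak C(G)$. This produces a bijection between the connected components of $\mathfrak C(G)$ and those of $\mathfrak C(G)/{\sim}$ that sends each component of diameter $1$ (equivalently, each complete component with at least two vertices) to an isolated vertex and preserves the diameter of every other component; a single-vertex component, being of diameter $0$, is fixed as an isolated vertex, which is consistent with the statement since $K_1$ is itself complete. Hence the bijection sends exactly the complete components to isolated vertices and preserves diameters otherwise. Now compose this with the component-level bijection induced by the graph isomorphism $\mathfrak C(G)/{\sim}\cong\Gamma_{\mathcal Z}(G)$ of Lemma \ref{tilde}; a graph isomorphism carries connected components to connected components and preserves diameters, so the composite retains the required behavior. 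This is the asserted bijection between the connected components of $\mathfrak C(G)$ and of $\Gamma_{\mathcal Z}(G)$.

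It then remains to check the final ``in particular'' clause. Suppose $C$ is a connected component of $\Gamma_{\mathcal Z}(G)$ that is a clique, and let $D$ be the component of $\mathfrak C(G)$ corresponding to $C$ under the bijection just built. A complete graph has diameter at most $1$, so $C$ has diameter at most $1$; since the bijection carries components of $\mathfrak C(G)$ of diameter at least $2$ to components of diameter at least $2$, the component $D$ cannot have diameter at least $2$, so $D$ is itself complete. By the bijection, a complete component of $\mathfrak C(G)$ corresponds to an isolated vertex of $\Gamma_{\mathcal Z}(G)$, and therefore $C$ is an isolated vertex. The main point to be careful with is precisely this last step: a clique on two or more vertices has diameter exactly $1$, and that is exactly the case Lemma \ref{bijection} collapses to an isolated vertex, so the only cliques that can occur as components of $\Gamma_{\mathcal Z}(G)$ are the trivial ones. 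Beyond this, there is no real obstacle, since the substantive content already lives in Lemmas \ref{tilde} and \ref{bijection}.
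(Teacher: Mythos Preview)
Your proposal is correct and follows exactly the same route as the paper: the paper's proof consists of the single sentence ``This is an immediate consequence of Lemmas \ref{bijection} and \ref{tilde},'' and you have simply spelled out that immediate consequence in detail, including the careful check of the ``in particular'' clause.
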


\begin{proof}
This is an immediate consequence of Lemmas \ref{bijection} and \ref{tilde}.
\end{proof}

Because of Corollary \ref{replacement}, we can use $\Gamma_{\mathcal{Z}} (G)$ when proving results regarding $\mathfrak{C} (G)$.  In particular, in the Introduction, we stated most of the results in terms of $\mathfrak{C} (G)$, but we will actually prove them for $\Gamma_{\mathcal{Z}} (G)$.

Observe that if $G$ and $H$ are groups such that $\mathfrak {C} (G) \cong \mathfrak {C} (H)$, then $\mathfrak C(G)/{\sim} \cong\mathfrak C(H)/{\sim}$.  Also, by Lemma \ref{tilde}, we see that $\Gamma_{\mathcal Z} (G) \cong \Gamma_{\mathcal Z} (H)$.  We next will show that if $G$ and $H$ are isoclinic then $\Gamma_{\mathcal Z} (G) \cong \Gamma_{\mathcal Z} (H)$.  However, isoclinism need not preserve order.  

On the other hand, if $\mathfrak C(G) \cong \mathfrak C(H)$, then $|G| - |Z(G)| = |H| - |Z(H)|$, and so, 
$$|Z(G)| (|G:Z(G)| - 1) = |Z(H)| (|H:Z(H)| - 1).$$  When $G$ and $H$ are isoclinic, we have $|G:Z(G)| = |H:Z(H)|$, and so, $|G:Z(G)| - 1 = |H:Z(H)| - 1$.  If $G$ and $H$ are isoclinic and $\mathfrak C (G) \cong \mathfrak C(H)$, we see that $|Z(G)| = |Z(H)|$.  This implies that $|G| = |H|$.  In particular, we conclude that if $G$ and $H$ are isoclinic but do not have the same order, then $\mathfrak C(G) \not\cong \mathfrak C(H)$.  Hence, in such a case, we have $\Gamma_{\mathcal Z} (G) \cong \Gamma_{\mathcal Z} (H)$ does not imply that $\mathfrak C (G)$ and $\mathfrak C(H)$ are isomorphic.  

\begin{lemma} \label{isocl 1}
If $G$ and $H$ are isoclinic groups, then $\Gamma_{\mathcal Z} (G) \cong \Gamma_{\mathcal Z} (H)$.
\end{lemma}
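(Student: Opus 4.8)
\textbf{Proof proposal for Lemma \ref{isocl 1}.}

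The plan is to leverage Theorem \ref{isoclinic}, which already gives us $\mathfrak{C}^*(G) \cong \mathfrak{C}^*(H)$ whenever $G$ and $H$ are isoclinic. Once we have an isomorphism of the transversal graphs, we pass to the quotient by the closed-twin relation: since $\sim$ is uniquely determined by the graph, an isomorphism $\mathfrak{C}^*(G) \cong \mathfrak{C}^*(H)$ descends to an isomorphism $\mathfrak{C}^*(G)/{\sim} \cong \mathfrak{C}^*(H)/{\sim}$. Finally, Lemma \ref{tilde} identifies $\mathfrak{C}^*(G)/{\sim}$ with $\Gamma_{\mathcal Z}(G)$ and likewise for $H$, so we conclude $\Gamma_{\mathcal Z}(G) \cong \Gamma_{\mathcal Z}(H)$.

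Concretely, the steps in order are: first, invoke Theorem \ref{isoclinic} to obtain a graph isomorphism $\phi : \mathfrak{C}^*(G) \to \mathfrak{C}^*(H)$. Second, recall the general fact (stated in the excerpt immediately after the definition of $\Gamma/{\sim}$) that isomorphic graphs have isomorphic quotients under the closed-twin relation, since $\sim$ is intrinsic to the graph; hence $\phi$ induces $\bar\phi : \mathfrak{C}^*(G)/{\sim} \to \mathfrak{C}^*(H)/{\sim}$, a graph isomorphism. Third, apply Lemma \ref{tilde} twice to get graph isomorphisms $\Gamma_{\mathcal Z}(G) \cong \mathfrak{C}^*(G)/{\sim}$ and $\mathfrak{C}^*(H)/{\sim} \cong \Gamma_{\mathcal Z}(H)$. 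Composing these three isomorphisms yields $\Gamma_{\mathcal Z}(G) \cong \Gamma_{\mathcal Z}(H)$, as desired.

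I do not expect a substantial obstacle here: every ingredient has already been established in the excerpt. The only point requiring a sentence of care is the descent of an isomorphism to the twin-quotient, but this is exactly the observation recorded before Lemma \ref{lem:graph_iso} — that $\sim$ is uniquely determined by $\Gamma$, so a graph isomorphism carries twin classes to twin classes and thus factors through the quotient. An alternative route, avoiding the detour through $\mathfrak{C}^*$, would be to note that $\Gamma_{\mathcal Z}$ depends only on the poset $\mathcal Z(G)$ together with the commuting relation among its members, and then check directly from the definition of isoclinism that $\alpha$ induces a correspondence of centralizers preserving this structure; but since Theorem \ref{isoclinic} and Lemma \ref{tilde} are already in hand, the quotient argument is cleaner and shorter.
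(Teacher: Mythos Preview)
Your proposal is correct and follows essentially the same route as the paper's proof: invoke Theorem \ref{isoclinic} to get $\mathfrak{C}^*(G)\cong\mathfrak{C}^*(H)$, pass to the closed-twin quotient, and apply Lemma \ref{tilde} to identify the quotients with the centralizer graphs. The paper's argument is just a terser version of exactly these three steps.
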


\begin{proof}
We have seen that if $G_1$ and $G_2$ are isoclinic, then $\mathfrak {C}^*(G_1) \cong \mathfrak {C}^* (G_2)$.  It follows that ${\mathfrak {C}^* (G_1)}/{\sim} \cong {\mathfrak {C}^*(G_2)}/{\sim}$.  Applying Lemma \ref{tilde}, we obtain the result.
\end{proof}

Note that the converse of Lemma \ref{isocl 1} is not true.  In particular, the counterexamples mentioned for the converse of Theorem \ref{intro thm 1} will be counterexamples here also.  We also give examples here where $G$ and $H$ have isomorphic centralizer graphs but where $|G| = |G:Z(G)|$ and $|H| = |H:Z(H)|$ are different, so they are not isoclinic.  Let $p$ be a prime and let $a$ be a positive integer.  Suppose that $q$ and $r$ are distinct primes that divide $p^a - 1$.  Take $F$ to be a field of order $p^a$, and take $A$ to be the additive group of $F$ and take $Q$ to be the subgroup of order $q$ and $R$ to be the subgroup of order $r$.  Set $G_1$ to be the semi-direct product of $Q$ acting on $A$ and $G_2$ to be the semi-direct product of $R$ acting on $A$.  It is not difficult to see that $G_1$ and $G_2$ are both Frobenius groups and both of their centralizer graphs will consist of $p^a + 1$ isolated vertices.  On the other hand, the commuting graph of $G_1$ will have one connected component that is a complete graph of size $p^a-1$ and $p^a$ connected components that are complete graphs of size $q-1$, while the commuting graph of $G_2$ will have one connected component that is a complete graph of size $p^a-1$ and $p^a$ connected components that are complete graphs of size $r-1$.  

We continue with $p,a,q,r$ as above.  Let $G_3$ be the semi-direct product of $QR$ acting on $A$.  Again, $G_3$ will be a Frobenius group and its centralizer graph consists of $p^a + 1$ isolated vertices.  Its commuting graph will have a connected component that is a complete graph of size $p^a-1$ and $p^a$ connected components that are complete graphs of size $qr - 1$.  On the other hand, take $H = G_1 \times R$ so that $|H| = |G_3|$.  It is not difficult to see that the centralizer graph of $H$ is isomorphic to the centralizer graph of $G_1$, and hence to that of $G_3$.  And since $|H:Z(H)| = |G_1:Z(G_1)| = p^aq$ is different than $|G_3:Z(G_3)| =p^a qr$, we see that $H$ and $G_3$ have non-isomorphic commuting graphs.  This shows that we can have $\Gamma_{\mathcal{Z}} (G) \cong \Gamma_{\mathcal {Z}} (H)$ and $|G| = |H|$, but $\mathfrak{C} (G) \not\cong \mathfrak{C} (H)$.



\section{Connectivity}

We now explore the connectivity of the graph $\Gamma_{\mathcal{Z}} (G)$.  We begin to explore the connection between the diameter of the centralizer graph in terms of $|G:Z(G)|$.

\begin{lemma} \label{intersection cent}
Let $G$ be a group, and $g,h \in G \setminus Z(G)$.  Then the following are equivalent:
\begin{enumerate}
\item $C_G (g) \cap C_G (h) > Z (G)$.
\item $Z(g)$ and $Z (h)$ are connected and have distance at most $2$ in $\Gamma_{\mathcal Z} (G)$.
\item $g$ and $h$ have distance at most $2$ in $\mathfrak {C} (G)$.
\item $gZ(G)$ and $h Z(G)$ have distance at most $2$ in $\mathfrak{C}^* (G)$.
\end{enumerate}
\end{lemma}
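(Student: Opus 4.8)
The plan is to show that all four conditions are equivalent to (1), being deliberately careful with the degenerate cases, since ``distance at most $2$'' is meant to include distances $0$ and $1$. First I would handle (1) $\Leftrightarrow$ (3). If $C_G(g)\cap C_G(h) > Z(G)$, pick $c\in (C_G(g)\cap C_G(h))\setminus Z(G)$; then $c$ commutes with both $g$ and $h$, so if $c\notin\{g,h\}$ it is a common neighbour of $g$ and $h$ in $\mathfrak{C}(G)$ and their distance is at most $2$, while if $c\in\{g,h\}$ then $g$ and $h$ already commute and their distance is at most $1$ (or $0$ when $g=h$). Conversely, a path of length at most $2$ joining $g$ and $h$ in $\mathfrak{C}(G)$ supplies a vertex --- namely $g$, $h$, or the intermediate vertex --- lying in $C_G(g)\cap C_G(h)$, and every vertex of $\mathfrak{C}(G)$ lies outside $Z(G)$, so $C_G(g)\cap C_G(h) > Z(G)$.

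For (3) $\Leftrightarrow$ (4) I would use the observation recorded when $\mathfrak{C}^*(G)$ was defined, that $a$ and $b$ commute if and only if $az_1$ and $bz_2$ commute for all $z_1,z_2\in Z(G)$. Hence a path $g,c,h$ (or shorter) in $\mathfrak{C}(G)$ pushes down to a walk $gZ(G),cZ(G),hZ(G)$ of length at most $2$ in $\mathfrak{C}^*(G)$, and conversely a path of length at most $2$ in $\mathfrak{C}^*(G)$ lifts by choosing coset representatives. The only thing to notice is the degenerate situation where $g$ and $h$ lie in the same coset of $Z(G)$: then they are adjacent in $\mathfrak{C}(G)$ but identified in $\mathfrak{C}^*(G)$, and in either graph the relevant distance is at most $2$.

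For (1) $\Leftrightarrow$ (2) I would first set up the dictionary for edges of $\Gamma_{\mathcal{Z}}(G)$: by Lemma \ref{cent equality} together with Lemma \ref{three} (1), for noncentral $a,b$ there is an edge between $Z(a)$ and $Z(b)$ in $\Gamma_{\mathcal{Z}}(G)$ exactly when $b\in C_G(a)$ (equivalently $a\in C_G(b)$) and $Z(a)\ne Z(b)$, while $Z(a)=Z(b)$ if and only if $C_G(a)=C_G(b)$ by Lemma \ref{star 1} (2). With this dictionary the argument of (1) $\Leftrightarrow$ (3) transfers almost verbatim: an element $c\in (C_G(g)\cap C_G(h))\setminus Z(G)$ gives a vertex $Z(c)$ of $\Gamma_{\mathcal{Z}}(G)$ that is equal or adjacent to each of $Z(g)$ and $Z(h)$, so $Z(g)$ and $Z(h)$ have distance at most $2$; conversely a path of length at most $2$ between $Z(g)$ and $Z(h)$ produces, through the dictionary, a noncentral element common to $C_G(g)$ and $C_G(h)$. (Alternatively one could invoke Lemma \ref{tilde} to replace $\Gamma_{\mathcal{Z}}(G)$ by $\mathfrak{C}(G)/{\sim}$ and then check directly, using the closed-twin relation, that passage to the twin quotient neither creates nor destroys distances of at most $2$; this is a small graph-theoretic lemma whose only subtlety is again the coincidence of the middle vertex with an endpoint.)

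I expect the main obstacle to be nothing deep but rather the systematic bookkeeping of the degenerate cases: the middle vertex coinciding with an endpoint, the two endpoints coinciding (or having equal centralizers, or lying in a common coset of $Z(G)$), and correctly recording that $Z(a)=Z(b)$ is \emph{not} an edge of $\Gamma_{\mathcal{Z}}(G)$ and so must be treated as a separate subcase. Once these are itemized, each implication is a one-line verification, and the point throughout is simply to make sure ``distance at most $2$'' is read as allowing distances $0$, $1$, and $2$.
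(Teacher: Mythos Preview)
Your proposal is correct and follows essentially the same approach as the paper: both arguments hinge on the observation that a noncentral element of $C_G(g)\cap C_G(h)$ furnishes the middle vertex of a length-$2$ path, and conversely any such path produces a noncentral element of the intersection. The only organizational difference is that the paper declares the equivalence of (2), (3), (4) to be ``easy to see'' and then writes out (1) $\Leftrightarrow$ (2), whereas you route everything through (1) and are explicitly careful about the degenerate cases (coinciding vertices, equal centralizers); this extra care is a virtue rather than a departure.
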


\begin{proof}
It is easy to see that the last three conditions are equivalent.  Thus, we must show that the first two are equivalent.  Suppose that $Z(G) < C_G (g) \cap C_G (h)$.  Since $Z(G) < C_G (g) \cap C_G (h)$, we can find an element $k \in (C_G (g) \cap C_G (h)) \setminus Z(G)$.  Observe that $Z(g)$ and $Z(h)$ are both adjacent to $Z(k)$ in $\Gamma_{\mathcal {Z}} (G)$.  Conversely, suppose that $Z(g)$ and $Z (h)$ are connected and have distance at most $2$ in $\Gamma_{\mathcal{Z}} (G)$.  If $Z(g)$ and $Z(h)$ are adjacent, then $Z(h) \le C_G (h)$, and since $Z(h) \le C_G (h)$, we have $Z(G) < Z(h) \le C_G (g) \cap C_G (h)$.  On the other hand, if they have a distance $2$, then $Z(k)$ is adjacent to both of them.  We have $Z(k)$ contained in both $C_G (g)$ and $C_G (h)$, so $Z(G) < Z(k) \le C_G (g) \cap C_G (h)$, and we have condition (1).
\end{proof}

We next show that if $|G'|$ is ``small'' compared to $|G:Z(G)|$, then the centralizer graph is connected and has diameter at most $2$.  Note that this yields Theorem \ref{intro thm 2} from the Introduction in the context of $\mathfrak{C} (G)$.

\begin{theorem} \label{diameter sq rt}
Let $G$ be a group.  If $|G'| < |G:Z(G)|^{1/2}$, then $\Gamma_{\mathcal Z} (G)$ is connected and has diameter at most $2$.
\end{theorem}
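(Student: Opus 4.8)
The plan is to reduce everything to the distance criterion of Lemma \ref{intersection cent} and then run a short counting argument powered by the hypothesis. Concretely, it suffices to show that for every pair $g,h \in G \setminus Z(G)$ one has $C_G(g) \cap C_G(h) > Z(G)$; by Lemma \ref{intersection cent} this gives that $Z(g)$ and $Z(h)$ are connected and lie at distance at most $2$ in $\Gamma_{\mathcal Z}(G)$, and since every vertex of $\Gamma_{\mathcal Z}(G)$ has the form $Z(g)$ for some $g \in G \setminus Z(G)$, this yields both connectedness and diameter at most $2$.

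First I would record the standard bound $|G : C_G(g)| \le |G'|$ for every $g \in G$: the conjugacy class of $g$ lies in the coset $gG'$, because $x^{-1}gx = g[g,x] \in gG'$ for all $x \in G$, so $|G:C_G(g)| = |\mathrm{cl}_G(g)| \le |G'|$. Next, for $g,h \in G \setminus Z(G)$ I would invoke the elementary inequality $|G : C_G(g) \cap C_G(h)| \le |G:C_G(g)| \cdot |G:C_G(h)|$, valid for any two subgroups. Combining these gives
$$
|G : C_G(g) \cap C_G(h)| \le |G'|^2 < |G:Z(G)|,
$$
where the strict inequality is exactly the hypothesis $|G'| < |G:Z(G)|^{1/2}$ squared. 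Since $Z(G) \le C_G(g) \cap C_G(h) \le G$, dividing indices yields $|C_G(g) \cap C_G(h) : Z(G)| = |G:Z(G)| \big/ |G : C_G(g) \cap C_G(h)| > 1$, so $C_G(g) \cap C_G(h) > Z(G)$, as needed.

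Finally I would note that the hypothesis forces $G$ to be nonabelian (otherwise $|G'| = 1$ and $|G:Z(G)| = 1$, contradicting $1 < 1$), so $\mathcal Z(G)$ is nonempty and the conclusion is not vacuous. There is no serious obstacle in this argument; the only points requiring a little care are keeping the inequality strict (so that the intersection \emph{strictly} contains $Z(G)$, which is what Lemma \ref{intersection cent}(1) demands) and observing that $g,h$ ranging over $G \setminus Z(G)$ produces every pair of vertices of $\Gamma_{\mathcal Z}(G)$, so that a universal distance bound of $2$ on these pairs is precisely the statement that $\Gamma_{\mathcal Z}(G)$ is connected of diameter at most $2$.
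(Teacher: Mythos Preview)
Your proof is correct and follows essentially the same approach as the paper: both use the bound $|G:C_G(g)|\le |G'|$ coming from $\mathrm{cl}_G(g)\subseteq gG'$, combine it with the index/product inequality to force $C_G(g)\cap C_G(h)>Z(G)$, and then invoke Lemma~\ref{intersection cent}. The only cosmetic differences are that the paper phrases the argument by contradiction and spells out the product formula $|C_G(g)C_G(h)|=|C_G(g)||C_G(h)|/|Z(G)|$ in place of citing the general inequality $|G:A\cap B|\le |G:A|\,|G:B|$ that you use.
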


\begin{proof}
Let $g \in G \setminus Z(G)$.  Observe that the conjugacy class of $g$ is a subset of $gG'$.  Since the size of the conjugacy class of $g$ is $|G:C_G (g)|$, we have $|G:C_G (g)| \le |gG'| = |G'|$.  By Lemma \ref{intersection cent}, if either $G$ is disconnected or the diameter of $\Gamma_{\mathcal Z} (G)$ is greater than $2$, then there exist elements $g, h \in G \setminus Z(G)$ such that $C_G(g) \cap C_G (h) = Z(G)$.  Note that the set $C_G (g) C_G (h)$ may not be a subgroup of $G$, but still satisfies $|C_G (g) C_G (h)| = |C_G (g)| |C_G (h)| / |Z(G)|$.  Thus, 
$$|C_G (g)|/|Z(G)| = |C_G (g) C_G (h)|/|C_G(h)| \le |G|/|C_G (h)|.$$  
We see that 
$$\frac {|G|}{|Z(G)|} = \frac {|G|}{|C_G (g)|}\frac {|C_G (g)|}{|Z(G)|} \le \frac {|G|}{|C_G (g)|}\frac{|G|}{|C_G (h)|} \le |G'|^2.$$  
Since this contradicts $|G'| < |G:Z(G)|^{1/2}$, we obtain the desired conclusion.
\end{proof}

We now have the following characterization of sets that are unions of connected components in $\Gamma_{\mathcal Z} (G)$. 

\begin{lemma} \label{graph comp}
Let $G$ be a group, let $A = \{ Z_1, \dots, Z_n \} \subseteq {\mathcal Z} (G)$ be a subset of $\Gamma_{\mathcal Z} (G)$, and let $C_i = C_G (Z_i)$ for $i = 1, \dots, n$.  Then $A$ is a union of connected components of $\Gamma_{\mathcal Z} (G)$ if and only if $\cup_{i=1}^n Z_i = \cup_{i=1}^n C_i$.
\end{lemma}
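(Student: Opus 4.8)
The plan is to prove the two directions separately, using Lemma~\ref{three} and Corollary~\ref{lem:corres} as the main tools. First assume that $A$ is a union of connected components of $\Gamma_{\mathcal Z}(G)$. I want to show $\bigcup_i Z_i = \bigcup_i C_i$. The inclusion $\bigcup_i Z_i \subseteq \bigcup_i C_i$ is immediate since $Z_j = Z(C_G(Z_j))$ gives $Z_j \le C_j$ for each $j$ (using Lemma~\ref{cent equality}, $C_G(Z_j) = C_G(g_j)$ where $Z_j = Z(g_j)$, and $Z_j \le C_G(g_j)$). For the reverse inclusion, take $x \in C_i \setminus Z(G)$ for some $i$; then $x \in C_G(Z_i) = C_G(g_i)$, so by Lemma~\ref{three}(1) we get $Z(x) \le C_G(g_i)$, hence $g_i \in C_G(Z(x))$, which means (via Lemma~\ref{three} again, or directly from the definition of the edge relation) that $Z(x)$ is adjacent to $Z_i = Z(g_i)$ in $\Gamma_{\mathcal Z}(G)$, or $Z(x) = Z_i$. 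Since $A$ is a union of connected components and $Z_i \in A$, adjacency forces $Z(x) \in A$, so $Z(x) = Z_j$ for some $j$; then $x \in Z(x) = Z_j \subseteq \bigcup_i Z_i$. (One must also handle $x \in Z(G)$: note $Z(G) \le Z_j$ for every $j$, so this is automatic provided $n \ge 1$.) This gives $C_i \setminus Z(G) \subseteq \bigcup_i Z_i$, and combined with $Z(G) \subseteq Z_1$ we get $C_i \subseteq \bigcup_i Z_i$ for each $i$, hence $\bigcup_i C_i \subseteq \bigcup_i Z_i$.

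For the converse, assume $\bigcup_{i=1}^n Z_i = \bigcup_{i=1}^n C_i$; I want to show $A$ is closed under adjacency in $\Gamma_{\mathcal Z}(G)$, i.e. that no $Z_i \in A$ is adjacent to any $Z(h) \notin A$. Suppose for contradiction that $Z(h)$ is adjacent to $Z_i$ for some $i$, so that $Z(h) \le C_G(Z_i) = C_i$. Pick $h' \in Z(h) \setminus Z(G)$ (possible since $Z(h) > Z(G)$); then $h' \in C_i \subseteq \bigcup_j C_j = \bigcup_j Z_j$, so $h' \in Z_j$ for some $j$. By Lemma~\ref{three one}, $h' \in Z_j$ implies $Z(h') \le Z_j$, and since $h' \in Z(h)$ we also have $Z(h') \le Z(h)$; moreover $h' \in Z(h)$ gives $Z(h') = Z(h)$ is false in general—rather, I should argue directly: $h' \in Z(h)$ with $h' \notin Z(G)$ gives by Lemma~\ref{three one} that $Z(h') \le Z(h)$, and conversely $h \in C_G(h')$? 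Let me instead use that $h' \in Z(h)$ means $Z(h') \le Z(h)$ and then $h \in C_G(Z(h))\le C_G(Z(h')) = C_G(h')$, so $h' \in C_G(h)$, giving $Z(h) \le C_G(h')$ by Lemma~\ref{three}(1), i.e. $Z(h') $ and $Z(h)$ are adjacent or equal; combined with $Z(h') \le Z_j$ we conclude $Z(h)$ is connected to $Z_j \in A$, forcing $Z(h) \in A$ — contradiction. (The cleanest route: $Z(h') \le Z(h)$ and $Z(h') \le Z_j$ with $Z(h') > Z(G)$, so any $c \in Z(h') \setminus Z(G)$ lies in both; if $Z(h') \ne Z_j$ they are adjacent, if $Z(h') \ne Z(h)$ they are adjacent, and equality cases trivially keep us inside $A$.)

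The main obstacle I anticipate is the bookkeeping around the center: $\mathcal Z(G)$ and $\mathcal C(G)$ always contain $Z(G)$ inside each of their members, so one must be careful that "$\bigcup Z_i = \bigcup C_i$" is really a statement about the nontrivial parts, and that degenerate cases (a vertex equal to another, or the path collapsing) are handled. The other delicate point is moving fluidly between "$x \in Z(g)$", "$Z(x) \le Z(g)$", and "$Z(g)$ adjacent to $Z(x)$" — this is exactly the content of Lemmas~\ref{three}, \ref{three one}, and \ref{star 1}, so the proof is really an exercise in chaining those equivalences correctly, with the connectivity hypothesis invoked precisely once in each direction to jump from "adjacent to something in $A$" to "in $A$". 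I would write it as two short paragraphs, each a direct argument rather than a contradiction where possible, to keep it clean.
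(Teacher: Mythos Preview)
Your forward direction (``$A$ a union of components $\Rightarrow \bigcup_i Z_i = \bigcup_i C_i$'') is correct and is essentially the paper's argument: an element $x \in C_i \setminus Z(G)$ has $Z(x)$ adjacent to (or equal to) $Z_i$, hence $Z(x) \in A$ by the hypothesis, hence $x \in Z(x) = Z_j$ for some $j$.

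Your converse, however, has a genuine gap. From $\bigcup_i Z_i = \bigcup_i C_i$ and $Z(h)$ adjacent to some $Z_i$, you correctly deduce (taking $h' = h$, which is the cleanest choice) that $h \in C_i \subseteq \bigcup_j Z_j$, so $h \in Z_j$ for some $j$ and hence $Z(h) \le Z_j$ by Lemma~\ref{three one}. But from $Z(h) \le Z_j$ you only get that $Z(h)$ is \emph{adjacent} to (or equal to) $Z_j$, not that $Z(h) \in A$; and ``connected to an element of $A$'' does not yield ``in $A$'' unless $A$ is already known to be a union of components --- precisely what you are trying to prove. Your step ``forcing $Z(h) \in A$'' is therefore circular, and the alternative ``cleanest route'' you sketch has the same defect. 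In fact this gap cannot be repaired, because the converse is false as stated: in $G = S_4$ take $A = \{Z_1\}$ with $Z_1 = Z((12)) = C_G((12)) = \{e,(12),(34),(12)(34)\}$; then $C_1 = Z_1$, so trivially $\bigcup_i Z_i = \bigcup_i C_i$, yet $Z((12)(34)) = \{e,(12)(34)\}$ is a distinct vertex of $\Gamma_{\mathcal Z}(G)$ adjacent to $Z_1$, so $\{Z_1\}$ is not a union of components. (The paper's own short proof elides the same point by tacitly conflating ``$x \in \bigcup_i Z_i$'' with ``$Z(x) \in A$''; note that only the ``only if'' direction is ever used subsequently.)
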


\begin{proof}
Since $Z_i \le C_i$, we have $\cup_{i=1}^n Z_i \subseteq \cup_{i=1}^n C_i$.  Observe that $\cup_{i=1}^n C_i \setminus Z(G)$ consists of all of the neighbors of the vertices $Z_1, \dots Z_n$.  Now, $A$ is a union of connected components if and only if it contains all of the neighbors of its elements.  Hence, $A$ is a union of connected components if and only if $\cup_{i=1}^n Z_i$ contains all of the neighbors of $Z_1, \dots, Z_n$.  The conclusion now follows.
\end{proof}

When $\Gamma$ is a graph and $A$ is a subset of the vertices of $\Gamma$, we say $A$ is a {\it connected subset} if there is a path between every pair of vertices in $A$.  The following result follows immediately from the previous lemma.

\begin{corollary} \label{connected comp union}
Let $G$ be a group, let $A = \{ Z_1, \dots, Z_n \} \subseteq {\mathcal Z} (G)$, and let $C_i = C_G (Z_i)$ for $i = 1, \dots, n$.  If $A$ is a connected subset of  $\Gamma_{\mathcal Z} (G)$, then $A$ is a connected component of $\Gamma_{\mathcal Z} (G)$ if and only $\cup_{i=1}^n Z_i = \cup_{i=1}^n C_i$.  In particular, if $\Xi$ is a connected component of $\Gamma_{\mathcal{Z}} (G)$, then $\displaystyle \bigcup_{Z(x) \in \Xi} Z(x) = \bigcup_{Z(x) \in \Xi} C_G (x)$.
\end{corollary}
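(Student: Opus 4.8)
The plan is to read this corollary off directly from Lemma \ref{graph comp}, which already characterizes the subsets of $\Gamma_{\mathcal{Z}}(G)$ that are unions of connected components by the equality $\bigcup_{i=1}^n Z_i = \bigcup_{i=1}^n C_i$. The key observation is that a single connected component is itself a (one-term) union of connected components, so one implication is immediate; for the converse one only has to rule out the possibility that a connected subset satisfying the union condition splits into several components.

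First I would establish the biconditional for a connected subset $A = \{Z_1, \dots, Z_n\}$. If $A$ is a connected component of $\Gamma_{\mathcal{Z}}(G)$, then in particular $A$ is a union of connected components, so Lemma \ref{graph comp} gives $\bigcup_{i=1}^n Z_i = \bigcup_{i=1}^n C_i$. Conversely, suppose $\bigcup_{i=1}^n Z_i = \bigcup_{i=1}^n C_i$. By Lemma \ref{graph comp}, $A$ is a union of connected components of $\Gamma_{\mathcal{Z}}(G)$; since $A$ is nonempty and connected, it cannot be partitioned into two or more distinct components, and hence $A$ is a single connected component.

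For the ``in particular'' assertion, let $\Xi$ be a connected component of $\Gamma_{\mathcal{Z}}(G)$, say $\Xi = \{Z(x_1), \dots, Z(x_n)\}$. Then $\Xi$ is both a connected subset and a connected component, so the first part applies with $A = \Xi$ and $C_i = C_G(Z(x_i))$, yielding $\bigcup_{Z(x) \in \Xi} Z(x) = \bigcup_{Z(x) \in \Xi} C_G(Z(x))$. Finally, Lemma \ref{cent equality} gives $C_G(Z(x)) = C_G(x)$ for every $x \in G \setminus Z(G)$, so the right-hand side equals $\bigcup_{Z(x) \in \Xi} C_G(x)$, which is the claimed identity.

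I do not anticipate any real obstacle here: essentially all of the content is already packaged in Lemma \ref{graph comp}, and the one point that warrants a word of care is the elementary graph-theoretic fact that a nonempty connected set of vertices which is also a union of connected components must consist of exactly one component.
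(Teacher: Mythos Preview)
Your proposal is correct and matches the paper's approach: the paper simply states that the result follows immediately from Lemma \ref{graph comp}, and you have supplied exactly the details behind that remark. Your extra care in invoking Lemma \ref{cent equality} to pass from $C_G(Z(x))$ to $C_G(x)$ in the ``in particular'' clause is appropriate and not spelled out in the paper.
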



Recall that a subset $C$ of the vertices of a graph $\Gamma$ forms a clique if the induced subgraph is a complete graph.  We now see that maximal cliques in $\Gamma_{\mathcal Z} (G)$ correspond to maximal abelian subgroups of $G$.

\begin{lemma} \label{graph 1}
Let $G$ be a group.  Then ${\mathcal A} \subseteq {\mathcal Z} (G)$ forms a clique in $\Gamma_{\mathcal Z}  (G)$ if and only if $\langle Z \in {\mathcal A} \rangle$ is an abelian subgroup of $G$.  Furthermore, ${\mathcal A}$ forms a maximal clique if and only if $\cup_{Z \in {\mathcal A}} Z$ is a maximal abelian subgroup of $G$.
\end{lemma}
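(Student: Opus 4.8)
The plan is to handle the two biconditionals separately, using the containment-reversing correspondence of Corollary \ref{lem:corres} and Lemma \ref{three one} to pass back and forth between the $Z(x)$'s and their elements. First I would prove the clique characterization. Suppose $\mathcal{A}\subseteq\mathcal{Z}(G)$ forms a clique. For any $Z(a),Z(b)\in\mathcal{A}$ the edge condition says $Z(b)\le C_G(Z(a))=C_G(a)$ (using Lemma \ref{cent equality}), so by Lemma \ref{three}(1) every element of $C_G(b)\setminus Z(G)$ centralizes $a$; in particular $b$ centralizes $a$, so all the generators of $\langle Z\in\mathcal{A}\rangle$ pairwise commute — but more is needed, since we must see that \emph{all} elements of $\bigcup_{Z\in\mathcal{A}}Z$ pairwise commute. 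For $a'\in Z(a)$ and $b'\in Z(b)$ with $a,b\in\mathcal{A}$ (abusing notation to identify the vertex $Z(a)$ with a representative $a$), Lemma \ref{three one} gives $Z(a')\le Z(a)$ and $Z(b')\le Z(b)$; combined with $Z(b)\le C_G(a)$ and Lemma \ref{three}, one gets $a'\in C_G(b')$. Hence $\bigcup_{Z\in\mathcal{A}}Z$ is a set of pairwise-commuting elements, and since each $Z$ is a subgroup containing $Z(G)$, the subgroup it generates is abelian. Conversely, if $\langle Z\in\mathcal{A}\rangle$ is abelian then for $Z(a),Z(b)\in\mathcal{A}$ we have $b\in C_G(a)$, so $Z(b)\le C_G(a)=C_G(Z(a))$ by Lemma \ref{three}(1) and Lemma \ref{cent equality}, i.e. there is an edge; thus $\mathcal{A}$ is a clique. (The edges between distinct vertices are exactly what is asserted; a vertex $Z(a)$ is of course not adjacent to itself, but $Z(a)$ is abelian as a subgroup only when $C_G(a)=Z(a)$, so care is needed only in interpreting ``clique,'' which by the paper's convention concerns the induced subgraph on the \emph{distinct} vertices.)

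Next I would establish the ``furthermore'' clause. If $\mathcal{A}$ is a maximal clique, then $A:=\bigcup_{Z\in\mathcal{A}}Z$ is abelian by the first part. To see it is maximal abelian, suppose $A\le B$ with $B$ abelian. For each $b\in B\setminus Z(G)$ we have $B\le C_G(b)$ so $C_G(b)$ contains the abelian subgroup $B$, and I would argue $Z(b)=Z(C_G(b))\ge$ ... here the clean route is: every $Z(a)$ with $a\in A\setminus Z(G)$ satisfies $a\in B\le C_G(b)$, hence $Z(a)\le C_G(b)$ by Lemma \ref{three}(1), i.e. $Z(b)$ is adjacent to every vertex of $\mathcal{A}$; and distinct elements $b$ give, via Lemma \ref{star 1}(2), possibly-equal vertices, but in any case $\mathcal{A}\cup\{Z(b):b\in B\setminus Z(G)\}$ is still a clique (any two of its vertices come from commuting elements of the abelian group $B$). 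By maximality of $\mathcal{A}$, each $Z(b)$ already lies in $\mathcal{A}$, so $b\in Z(b)\subseteq A$; hence $B\subseteq A$ and $A$ is maximal abelian. Conversely, if $A=\bigcup_{Z\in\mathcal{A}}Z$ is maximal abelian, then $\mathcal{A}$ is a clique by the first part, and if $\mathcal{A}\subsetneq\mathcal{A}'$ were a larger clique, then $\bigcup_{Z\in\mathcal{A}'}Z$ would be a strictly larger abelian subgroup (strictly larger because a new vertex $Z(c)\notin\mathcal{A}$ contains $c$, and $c\notin A$ since $Z(c)\ne Z(a)$ for all $a$ forces, by Lemma \ref{star 1}(2), $C_G(c)\ne C_G(a)$, whence $c$ is in no $Z(a)$ by Lemma \ref{three one}), contradicting maximality of $A$.

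The main obstacle I anticipate is the bookkeeping at the ``element vs. vertex'' interface: a vertex of $\Gamma_{\mathcal Z}(G)$ is a subgroup $Z(g)$, not an element, so phrases like ``$\langle Z\in\mathcal{A}\rangle$ is abelian'' must be unwound carefully, and I must consistently use Lemma \ref{three one} (membership $a\in Z(b)$ $\Leftrightarrow$ $Z(a)\le Z(b)$) together with the adjacency definition ($Z(b)$ adjacent to $Z(a)$ $\Leftrightarrow$ $Z(b)\le C_G(Z(a))=C_G(a)$) to translate between ``$\mathcal{A}$ is a clique'' and ``the elements appearing in the vertices of $\mathcal{A}$ pairwise commute.'' Lemma \ref{three} is exactly the tool that lets $Z(b)\le C_G(a)$ be upgraded to ``every element of $Z(b)$ centralizes every element of $Z(a)$,'' so once that is invoked at the right moment the rest is routine. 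A secondary subtlety is making sure the strictness of the containment $A\subsetneq\bigcup_{Z\in\mathcal{A}'}Z$ is genuinely strict, for which the injectivity of $\phi$ in Corollary \ref{lem:corres} (equivalently Lemma \ref{star 1}(2)) is the decisive point.
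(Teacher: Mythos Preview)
Your treatment of the first biconditional and of the forward implication of the ``furthermore'' clause parallels the paper's proof closely; you work more at the element level where the paper argues with subgroups, and you set $A:=\bigcup_{Z\in\mathcal A}Z$ and tacitly treat it as a subgroup, whereas the paper takes $A=\langle Z:Z\in\mathcal A\rangle$ and uses maximality of $\mathcal A$ (via the auxiliary set $\mathcal B=\{Z(a):a\in A\}$) to identify the two. These are cosmetic differences and your argument is easily completed along those lines.

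There is, however, a genuine gap in your converse implication of the ``furthermore'' clause, and it is the \emph{same} gap present in the paper's own argument. To obtain strict containment you claim that a new vertex $Z(c)\notin\mathcal A$ must have $c\notin A$: since $Z(c)\neq Z(a)$ for each $Z(a)\in\mathcal A$, Lemma~\ref{star 1}(2) gives $C_G(c)\neq C_G(a)$, ``whence $c$ is in no $Z(a)$ by Lemma~\ref{three one}.'' But Lemma~\ref{three one} says $c\in Z(a)\iff Z(c)\le Z(a)$, and $Z(c)\neq Z(a)$ does not preclude $Z(c)\lneq Z(a)$. Concretely, in $G=S_4$ take $\mathcal A=\{Z((12))\}$: then $\bigcup_{Z\in\mathcal A}Z=Z((12))=\langle(12),(34)\rangle$ is maximal abelian, yet $Z((12)(34))=\{1,(12)(34)\}\lneq Z((12))$ is a distinct vertex adjacent to $Z((12))$, so $\mathcal A$ is not a maximal clique. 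The paper's sentence ``any element $Y\in\mathcal B\setminus\mathcal A$ will not be a subgroup of $A$'' fails for exactly this reason. What does survive is the forward direction, which yields a bijection from maximal cliques to maximal abelian subgroups; its inverse, however, is $M\mapsto\{Z(a):a\in M\setminus Z(G)\}$, not $M\mapsto$ an arbitrary $\mathcal A$ with $\bigcup_{Z\in\mathcal A}Z=M$.
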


\begin{proof}
Let $A = \langle Z \in {\mathcal A} \rangle$.  Assume that $A$ is abelian.  Then $A \le \cup_{Z \in {\mathcal A}} C_G (Z)$.  If $Z_1, Z_2 \in {\mathcal A}$, then $Z_1, Z_2 \le A$.  This implies $Z_2 \le C_G (Z_1)$, and so there is an edge between $Z_1$ and $Z_2$.  This implies that ${\mathcal A}$ forms a clique in $\Gamma_{\mathcal Z} (G)$.  Conversely, suppose that ${\mathcal A}$ forms a clique.  Consider $Y \in {\mathcal A}$.  Then for all $Z \in {\mathcal A}$, we have $Y \le C_G (Z)$.  This implies that $Y$ centralizes $Z$.  Now, $Y$ centralizes all the generators of $A$, so $Y \le Z (A)$.  Since $Y$ was arbitrary, we obtain $A = Z(A)$, and so, $A$ is abelian.

Suppose now that ${\mathcal A}$ forms a maximal clique.  By the first part, we see that $A = \langle Z \in {\mathcal A} \rangle$ is an abelian subgroup of $G$.  Let ${\mathcal B} = \{ Z (a) \mid a \in A \}$ and observe that $A = \cup_{Z \in {\mathcal B}} Z$.  By the first part, ${\mathcal B}$ is a clique, and it easy to see that ${\mathcal A} \subseteq {\mathcal B}$.  By maximality, we obtain ${\mathcal A} = {\mathcal B}$ which implies $A = \cup_{Z \in {\mathcal A}} Z$.  Finally, if $A$ is not a maximal abelian subgroup, then $A < A_1$ where $A_1$ is abelian.  Thus, there exists $b \in A_1 \setminus A$.  By the first part, ${\mathcal A} \cup \{ Z(b) \}$ will be a clique and this violates the maximality of ${\mathcal A}$.  Thus, $A$ is a maximal abelian subgroup.  Conversely, if $\cup_{Z \in {\mathcal A}} Z$ is a maximal abelian subgroup, then $A = \cup_{Z \in {\mathcal A}} Z$.  By the first part, ${\mathcal A}$ is a clique.  If it is not a maximal clique, then ${\mathcal A}$ is properly contained in the clique ${\mathcal B}$.  Since $A = \cup_{Z \in {\mathcal A}} Z$, any element $Y \in {\mathcal B} \setminus {\mathcal A}$ will not be a subgroup of $A$.  Thus, $A < \langle Y \mid Y \in {\mathcal B} \rangle$ and by the first part, $\langle Y \mid Y \in {\mathcal B} \rangle$ is abelian.  This contradicts the maximality of $A$, and proves the result.
\end{proof}


\section{Subordinate and independent vertices}

Let $\Gamma$ be a graph.  Let $u$ and $v$ be vertices of $\Gamma$.  We will say that $u$ is {\it subordinate} to $v$ if (1) $u$ and $v$ are neighbors, (2) every neighbor of $u$ is a neighbor of $v$, and (3) $v$ has a neighbor that is not a neighbor of $u$.  We will say $u$ is an {\it independent vertex} if it is not subordinate to any vertex.

\begin{lemma} \label{sub1}
Let $G$ be a group.  Let $g, h \in G \setminus Z(G)$.  Then the following are equivalent:
\begin{enumerate}
\item $Z (g) < Z(h)$.
\item $C_G (h) < C_G (g)$.
\item $h$ is a subordinate vertex to $g$ in ${\mathfrak C} (G)$.
\item $Z (h)$ is a subordinate vertex to $Z (g)$ in $\Gamma_{\mathcal Z} (G)$.
\end{enumerate}
\end{lemma}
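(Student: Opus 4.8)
The plan is to establish $(1) \Leftrightarrow (2)$ algebraically and then show that each of $(3)$ and $(4)$ is equivalent to $(2)$ by unwinding the definition of a subordinate vertex into a strict containment of centralizers. For $(1) \Leftrightarrow (2)$: Lemma~\ref{star 1}(1) (with the roles of $g$ and $h$ exchanged) gives $C_G(h) \le C_G(g) \Leftrightarrow Z(g) \le Z(h)$, and Lemma~\ref{star 1}(2) gives $C_G(h) = C_G(g) \Leftrightarrow Z(g) = Z(h)$; combining these, $C_G(h) < C_G(g) \Leftrightarrow Z(g) < Z(h)$.

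Next I would record the closed neighborhoods that occur. Viewing $x \in G \setminus Z(G)$ as a vertex of $\mathfrak{C}(G)$, a vertex $y$ is equal or adjacent to $x$ exactly when $y \in C_G(x)$, so $\{x\} \cup N(x) = C_G(x) \setminus Z(G)$. In $\Gamma_{\mathcal{Z}}(G)$, an edge joins $Z(x)$ and $Z(y)$ exactly when $Z(y) \le C_G(Z(x)) = C_G(x)$, using Lemma~\ref{cent equality}; since also $Z(x) \le C_G(x)$, the set consisting of $Z(x)$ together with its neighbors is $\{\, Z(y) \in \mathcal{Z}(G) : Z(y) \le C_G(x)\,\}$. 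I would also note the elementary graph-theoretic fact that a vertex $u$ is subordinate to a vertex $v$ precisely when $\{u\} \cup N(u)$ is a \emph{proper} subset of $\{v\} \cup N(v)$: containment (together with $u \ne v$) is adjacency plus clause (2) of the definition, while properness is clause (3), i.e.\ that $u$ and $v$ are not closed twins.

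With these observations $(2) \Leftrightarrow (3)$ is immediate: $h$ is subordinate to $g$ in $\mathfrak{C}(G)$ iff $C_G(h) \setminus Z(G) \subsetneq C_G(g) \setminus Z(G)$, and since $Z(G)$ lies in both centralizers this is just $C_G(h) < C_G(g)$. For $(2) \Leftrightarrow (4)$: $Z(h)$ is subordinate to $Z(g)$ in $\Gamma_{\mathcal{Z}}(G)$ iff $\{ Z(y) : Z(y) \le C_G(h) \} \subsetneq \{ Z(y) : Z(y) \le C_G(g) \}$; the implication from $C_G(h) < C_G(g)$ is clear, and conversely, if these sets are contained one in the other then applying this to $Z(y)$ for $y \in C_G(h) \setminus Z(G)$ (using Lemma~\ref{three}(1)) yields $C_G(h) \le C_G(g)$, while properness produces some $y \in C_G(g) \setminus C_G(h)$ and hence $C_G(h) \ne C_G(g)$. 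Alternatively, once the $\mathfrak{C}(G)$ case is settled, $(3) \Leftrightarrow (4)$ follows from the isomorphism $\mathfrak{C}(G)/{\sim} \cong \Gamma_{\mathcal{Z}}(G)$ of Lemma~\ref{tilde}, because subordinacy already forces $g$ and $h$ into distinct closed-twin classes and so is faithfully visible in the quotient.

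I do not expect a real obstacle; the statement is definition-chasing layered on Lemma~\ref{star 1}. The one point requiring care is matching clause (3) of the definition of ``subordinate'' with the strictness $C_G(h) \ne C_G(g)$: one must be sure that the neighbor of $g$ witnessing clause (3) is genuinely not $h$ itself --- equivalently, that $g$ and $h$ are not closed twins --- so that the bookkeeping of closed versus open neighborhoods introduces no spurious non-strict case.
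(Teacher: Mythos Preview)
Your argument is correct and follows essentially the same ideas as the paper: the equivalence $(1)\Leftrightarrow(2)$ via Lemma~\ref{star 1}, and the remaining equivalences by reading the definition of ``subordinate'' as a statement about closed neighborhoods, which in $\mathfrak{C}(G)$ are the sets $C_G(x)\setminus Z(G)$. The only organizational difference is that the paper proves the cycle $(2)\Rightarrow(3)\Rightarrow(4)\Rightarrow(2)$ (using Lemma~\ref{tilde} for $(3)\Rightarrow(4)$), whereas you establish $(2)\Leftrightarrow(3)$ and $(2)\Leftrightarrow(4)$ directly via your closed-neighborhood description; both routes amount to the same bookkeeping.
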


\begin{proof}
By Lemma \ref{star 1}, we have $Z(g) < Z(h)$ if and only if $C_G (g) > C_G (h)$.

Suppose that $C_G (h) < C_G (g)$.  Since $h \in C_G (h) \le C_G (g)$, it follows that $g$ and $h$ commute, so $g$ and $h$ are neighbors in ${\mathfrak C} (G)$.  Suppose $k$ is a neighbor to $h$ in ${\mathfrak C} (G)$.  This implies that $k$ and $h$ commute, so $k \in C_G (h)$.  Since $C_G(h) \leq C_G (g)$, we see that $k$ and $g$ commute, so $g$ and $k$ are neighbors in ${\mathfrak C} (G)$.  Hence, every neighbor of $h$ is a neighbor of $g$.  Since $C_G (h) < C_G (g)$, we can find $x \in C_G (g) \setminus C_G (h)$.  It follows that $x$ and $g$ commute and $x$ and $h$ do not commute.  Thus, $x$ is a neighbor of $g$ that is not a neighbor of $h$.  This proves (3).

Suppose $h$ is a subordinate vertex to $g$ in ${\mathfrak C} (G)$.  Since $g$ and $h$ are neighbors in ${\mathfrak C} (G)$, we see that $[g]$ and $[h]$ are neighbors in ${\mathfrak C} (G)/{\sim}$.  Suppose $[x]$ is a neighbor of $[h]$ in ${\mathfrak C} (G)/{\sim}$.  It follows that $x$ is a neighbor of $h$ in ${\mathfrak C} (G)$, and so $x$ is a neighbor of $g$ in ${\mathfrak C} (G)$ and $[x]$ and $[g]$ are neighbors in ${\mathfrak C} (G)/{\sim}$.  Finally, we have a neighbor $y$ of $g$ in ${\mathfrak C} (G)$ that is not a neighbor of $[h]$.  It follows that $[y]$ is a neighbor of $[g]$ that is not a neighbor of $[h]$ in ${\mathfrak C} (G)/{\sim}$.  Thus, $[h]$ is a subordinate vertex to $[g]$ in ${\mathfrak C} (G)/{\sim}$.  Using Lemma \ref{tilde}, we see that this implies that $Z(h)$ is a subordinate vertex to $Z(g)$ in $\Gamma_{\mathcal Z} (G)$.

Suppose that $Z(h)$ is a subordinate vertex to $Z(g)$ in $\Gamma_{\mathcal Z} (G)$.  Since $Z(h)$ and $Z(g)$ are neighbors, we have $h \in C_G (g)$.  Let $x \in C_G (h) \setminus Z(G)$.  This implies that $Z(x)$ and $Z(h)$ are neighbors.  Since $Z(h)$ is subordinate to $Z(g)$, we have $Z(x)$ and $Z(g)$ are neighbors.  Hence, $x \in C_G (g)$.  We conclude that $C_G (h) \le C_G (g)$.  Finally, we have $y \in G \setminus Z(G)$ so that $Z(y)$ is a neighbor of $Z(g)$, but not a neighbor of $Z(h)$.  It follows that $y \in C_G (g)$, but $y \not\in C_G (h)$.  Thus, $C_G (h) < C_G (g)$.  This implies (2).
\end{proof}

The following is an immediate consequence of Lemma \ref{sub1}.

\begin{corollary}\label{sub2}
Let $G$ be a group.  Let $g \in G \setminus Z(G)$.  Then $g$ is an independent vertex in ${\mathfrak C} (G)$ if and only if $Z (g)$ is an independent vertex in $\Gamma_{\mathcal Z} (G)$.
\end{corollary}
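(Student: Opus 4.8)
The plan is to deduce this by contraposition from the equivalence of conditions (3) and (4) in Lemma \ref{sub1}. First I would unwind the definitions: $g$ fails to be independent in $\mathfrak{C}(G)$ exactly when there is a vertex of $\mathfrak{C}(G)$, say $k \in G \setminus Z(G)$, to which $g$ is subordinate; and $Z(g)$ fails to be independent in $\Gamma_{\mathcal{Z}}(G)$ exactly when there is a vertex of $\Gamma_{\mathcal{Z}}(G)$ to which $Z(g)$ is subordinate. The one observation worth spelling out is that every vertex of $\Gamma_{\mathcal{Z}}(G)$ has the form $Z(k)$ for some $k \in G \setminus Z(G)$, directly from the definition of $\mathcal{Z}(G)$; hence the second condition is equivalent to the existence of some $k \in G \setminus Z(G)$ with $Z(g)$ subordinate to $Z(k)$ in $\Gamma_{\mathcal{Z}}(G)$.

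Next I would invoke Lemma \ref{sub1} with the element in the role of ``$h$'' taken to be $g$ and the element in the role of ``$g$'' taken to be $k$. The equivalence (3) $\Leftrightarrow$ (4) of that lemma then reads: $g$ is subordinate to $k$ in $\mathfrak{C}(G)$ if and only if $Z(g)$ is subordinate to $Z(k)$ in $\Gamma_{\mathcal{Z}}(G)$. Quantifying over all $k \in G \setminus Z(G)$, this says that $g$ is subordinate to some vertex of $\mathfrak{C}(G)$ precisely when $Z(g)$ is subordinate to some vertex of $\Gamma_{\mathcal{Z}}(G)$, i.e.\ $g$ is non-independent in $\mathfrak{C}(G)$ precisely when $Z(g)$ is non-independent in $\Gamma_{\mathcal{Z}}(G)$. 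Taking contrapositives gives the corollary.

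There is no real obstacle here; the argument is a routine translation through Lemma \ref{sub1}. The only points requiring a moment's care are the bookkeeping of which element plays the role of ``$g$'' and which plays ``$h$'' in that lemma, and the remark that ranging over all vertices of $\Gamma_{\mathcal{Z}}(G)$ is the same as ranging over all sets $Z(k)$ with $k \in G \setminus Z(G)$.
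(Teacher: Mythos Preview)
Your argument is correct and is exactly the unpacking the paper has in mind: the paper simply states that the corollary is an immediate consequence of Lemma~\ref{sub1}, and your contrapositive argument using the equivalence (3)\,$\Leftrightarrow$\,(4) of that lemma is precisely the intended one-line deduction, written out carefully.
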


We next use Lemma \ref{sub1} to see that $g$ being an independent vertex in $\mathfrak{C} (G)$ is equivalent to $C_G (g)$ being maximal in $\mathcal{C} (G)$ and $Z(g)$ being minimal in $\mathcal{Z} (G)$.

\begin{corollary}\label{independent max min}
Let $G$ be a group and let $g \in G \setminus Z(G)$.  Then the following are equivalent:
\begin{enumerate}
\item $g$ is an independent vertex in $\mathfrak{C} (G)$.
\item $C_G (g)$ is an independent vertex in $\Gamma_{\mathcal{Z}} (G)$.
\item $C_G (g)$ is maximal in $\mathcal{C} (G)$.
\item $Z(g)$ is minimal in $\mathcal{Z} (G)$. 
\end{enumerate}
\end{corollary}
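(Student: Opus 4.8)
The plan is to deduce all four equivalences directly from Lemma \ref{sub1} and Corollary \ref{sub2}, since those already package the relevant correspondences; the statement is essentially an unwinding of the definition of an independent vertex, with no new group-theoretic input.

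First I would dispose of the equivalence (1) $\iff$ (2). Recall from Corollary \ref{lem:corres} that $\phi$ gives a $1$-to-$1$ correspondence identifying $C_G(g) \in \mathcal{C}(G)$ with $Z(g) \in \mathcal{Z}(G)$, and recall that in the definition of $\Gamma_{\mathcal{Z}}(G)$ we noted the vertex set may equally well be taken to be $\mathcal{C}(G)$. Under this reading, ``$C_G(g)$ is an independent vertex in $\Gamma_{\mathcal{Z}}(G)$'' is just the statement ``$Z(g)$ is an independent vertex in $\Gamma_{\mathcal{Z}}(G)$'' phrased with the centralizer labelling, so (1) $\iff$ (2) is exactly Corollary \ref{sub2}.

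Next I would handle (1) $\iff$ (3). By definition, $g$ fails to be an independent vertex in $\mathfrak{C}(G)$ precisely when there is some $k \in G \setminus Z(G)$ to which $g$ is subordinate. Applying Lemma \ref{sub1} with $(g,h)$ there replaced by $(k,g)$, the condition ``$g$ is subordinate to $k$ in $\mathfrak{C}(G)$'' is equivalent to ``$C_G(g) < C_G(k)$''. Hence $g$ is an independent vertex in $\mathfrak{C}(G)$ if and only if there is no $k \in G \setminus Z(G)$ with $C_G(g) < C_G(k)$, that is, if and only if $C_G(g)$ is a maximal element of $\mathcal{C}(G)$ (every member of $\mathcal{C}(G)$ being of the form $C_G(k)$ for some non-central $k$). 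This is (3). The equivalence (1) $\iff$ (4) follows in the same way, using instead the part of Lemma \ref{sub1} asserting that ``$g$ is subordinate to $k$'' is equivalent to ``$Z(k) < Z(g)$'': thus $g$ is independent if and only if no $Z(k)$ lies strictly below $Z(g)$ in $\mathcal{Z}(G)$, i.e. if and only if $Z(g)$ is minimal in $\mathcal{Z}(G)$.

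I do not expect a genuine obstacle here; the argument is pure bookkeeping. The only points requiring care are keeping the direction of the subordinate relation aligned with the (inclusion-reversing) orders on $\mathcal{C}(G)$ and $\mathcal{Z}(G)$ supplied by Corollary \ref{lem:corres}, and keeping straight that $C_G(g)$ and $Z(g)$ name one and the same vertex of $\Gamma_{\mathcal{Z}}(G)$. In effect all four conditions collapse to the single assertion that $C_G(g)$ is not properly contained in the centralizer of any other non-central element of $G$.
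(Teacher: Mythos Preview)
Your proof is correct and matches the paper's approach: the paper simply states the corollary as an immediate consequence of Lemma \ref{sub1} (and Corollary \ref{sub2}) without writing out the details, and your argument is precisely the routine unwinding of those results that the paper leaves implicit.
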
 

Using Lemma \ref{sub1}, we see that we can determine from $\mathfrak{C} (G)$ whether or not $G$ is an $F$-group.

\begin{corollary} \label{F group}
Let $G$ be a group.  The following are equivalent:
\begin{enumerate}
\item $G$ is an $F$-group.
\item Every vertex in $\Gamma_{\mathcal Z} (G)$ is an independent vertex.
\item Every vertex in $\mathfrak{C} (G)$ is an independent vertex.
\end{enumerate}
\end{corollary}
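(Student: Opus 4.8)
The plan is to deduce everything from Lemma \ref{sub1} and Corollary \ref{sub2}, so the argument should be short. First I would dispose of the equivalence (2) $\Leftrightarrow$ (3). Every vertex of $\Gamma_{\mathcal Z}(G)$ is of the form $Z(g)$ for some $g \in G \setminus Z(G)$, and every vertex of $\mathfrak{C}(G)$ is such an element $g$. By Corollary \ref{sub2}, $g$ is an independent vertex in $\mathfrak{C}(G)$ if and only if $Z(g)$ is an independent vertex in $\Gamma_{\mathcal Z}(G)$. Hence ``every vertex of $\mathfrak{C}(G)$ is independent'' holds precisely when ``every vertex of $\Gamma_{\mathcal Z}(G)$ is independent'' holds.

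Next I would handle (1) $\Leftrightarrow$ (3) by unwinding the definition of $F$-group. Recall $G$ is an $F$-group if, for all $x,y \in G \setminus Z(G)$, $C_G(x) \le C_G(y)$ forces $C_G(x) = C_G(y)$; equivalently, there do not exist $x,y \in G \setminus Z(G)$ with $C_G(x) < C_G(y)$ (proper containment). By Lemma \ref{sub1}, for $g,h \in G \setminus Z(G)$ the condition $C_G(h) < C_G(g)$ is equivalent to $h$ being a subordinate vertex to $g$ in $\mathfrak{C}(G)$. Therefore the nonexistence of a proper containment among centralizers of non-central elements is exactly the nonexistence of a subordinate vertex in $\mathfrak{C}(G)$, i.e., every vertex of $\mathfrak{C}(G)$ is independent. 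This gives (1) $\Leftrightarrow$ (3), and combined with the previous paragraph the three statements are all equivalent.

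I do not expect any real obstacle here; the only point requiring a moment of care is the bookkeeping of which direction of inclusion corresponds to subordination (in Lemma \ref{sub1}, $Z(g) < Z(h)$, $C_G(h) < C_G(g)$, and ``$h$ subordinate to $g$'' are the matched conditions), and the elementary observation that the $F$-group hypothesis ``$C_G(x)\le C_G(y) \Rightarrow C_G(x)=C_G(y)$'' is just a restatement of ``no proper containment $C_G(x) < C_G(y)$ occurs.''
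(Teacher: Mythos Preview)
Your proposal is correct and follows exactly the approach the paper indicates: the paper does not write out a proof for this corollary at all, simply prefacing it with ``Using Lemma \ref{sub1}, we see that \dots,'' and your argument just makes that deduction (together with its immediate consequence Corollary \ref{sub2}) explicit.
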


We next show that if $C_G (g)$ is not minimal in $\mathcal{C} (G)$, then $C_G (g)$ is not abelian.

\begin{lemma}\label{sub3}
Let $G$ be a group.  Let $g, h \in G \setminus Z(G)$.  If $C_G (h) < C_G (g)$, then $C_G (g)$ is not abelian.
\end{lemma}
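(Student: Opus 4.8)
The plan is to argue by contradiction, and the whole proof comes down to the single observation that the hypothesis $C_G(h) < C_G(g)$ already forces $h$ itself into $C_G(g)$. So suppose $C_G(h) < C_G(g)$ and, for contradiction, that $C_G(g)$ is abelian. Since $h \in C_G(h)$ and $C_G(h) < C_G(g)$, we get $h \in C_G(g)$.

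Now I would invoke abelianness of $C_G(g)$: every element of $C_G(g)$ commutes with the element $h \in C_G(g)$, so $C_G(g) \le C_G(h)$. This contradicts the strict containment $C_G(h) < C_G(g)$, and the lemma follows. (Equivalently, one can route through the earlier results: by Lemma \ref{sub1}, $C_G(h) < C_G(g)$ is the same as $Z(g) < Z(h)$, and by Lemma \ref{abel cent}, $C_G(g)$ abelian gives $C_G(g) = Z(g)$; then $h \in C_G(h) \le C_G(g) = Z(g)$ yields $Z(h) \le Z(g)$ by Lemma \ref{three one}, contradicting $Z(g) < Z(h)$.)

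I do not expect any genuine obstacle here; the statement is essentially immediate once one notices the membership $h \in C_G(g)$. The only point that needs a word of care is that the strict inclusion is being used merely to extract the (non-strict) fact that $h$ centralizes $g$, after which the hypothesis that $C_G(g)$ is abelian reverses the inclusion and produces the contradiction.
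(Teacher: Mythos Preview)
Your argument is correct and matches the paper's proof essentially line for line: the paper also observes $h \in C_G(h) < C_G(g)$, then uses abelianness of $C_G(g)$ to get $C_G(g) \le C_G(h)$, contradicting the strict inclusion. Your parenthetical alternative via Lemmas \ref{sub1}, \ref{abel cent}, and \ref{three one} is also valid but unnecessary, as you note.
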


\begin{proof}
We have $h \in C_G (h) < C_G (g)$.  If $C_G (g)$ is abelian, then $C_G (g)$ centralizes $h$, and so, $C_G (g) \le C_G (h)$, a contradiction.  Thus, $C_G (g)$ is not abelian. 
\end{proof}

We now characterize the isolated vertices in $\Gamma_{\mathcal Z} (G)$.

\begin{lemma} \label{isolated}
Let $G$ be a group.  Let $g \in G \setminus Z(G)$.  Then the following are equivalent:
\begin{enumerate}
\item $C_G (g)$ is abelian and $g$ is an independent vertex in ${\mathfrak C} (G)$.
\item $C_G (g)$ is abelian and $Z (g)$ is an independent vertex in $\Gamma_{\mathcal Z} (G)$.
\item For all $h \in G \setminus Z(G)$, either $C_G (h) = C_G (g)$ or $C_G (h) \cap C_G (g) = Z(G)$.
\item $C_G (h) = C_G (g)$ for all $h \in C_G (g) \setminus Z(G)$.
\item $Z (h) = Z (g)$ for all $h \in C_G (g) \setminus Z(G)$.
\item $Z (h) = C_G (g)$ for all $h \in C_G (g) \setminus Z(G)$.
\item $Z (g)$ is an isolated vertex in $\Gamma_{\mathcal Z} (G)$.
\end{enumerate}
\end{lemma}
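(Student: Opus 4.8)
The plan is to route all seven statements through the single condition (5), namely that $Z(h) = Z(g)$ for every $h \in C_G(g) \setminus Z(G)$; once (3)--(7) are all seen to be equivalent to (5), I will link (5) to (2) and invoke Corollary \ref{sub2} to obtain (1) $\Leftrightarrow$ (2).

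First I would dispatch the routine links among (4), (5), (6). The equivalence of (4) and (5) is immediate from Lemma \ref{star 1}(2). For (5) $\Rightarrow$ (6): if (5) holds then every $h \in C_G(g) \setminus Z(G)$ satisfies $h \in Z(h) = Z(g)$, so $C_G(g) \setminus Z(G) \subseteq Z(g)$; since $Z(G) \le Z(g) \le C_G(g)$ always, this forces $C_G(g) = Z(g)$, whence $Z(h) = Z(g) = C_G(g)$, which is (6). Conversely, (6) with $h = g$ gives $Z(g) = C_G(g)$, so $Z(h) = C_G(g) = Z(g)$ for all relevant $h$, recovering (5).

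Next I would tie (5) to the target (7) and to (3). For the first: a vertex $Z(h)$ (with $h \notin Z(G)$) is adjacent to $Z(g)$ in $\Gamma_{\mathcal Z}(G)$ exactly when $Z(h) \ne Z(g)$ and $Z(h) \le C_G(Z(g)) = C_G(g)$ (Lemma \ref{cent equality}), and by Lemma \ref{three}(1) the containment $Z(h) \le C_G(g)$ is equivalent to $h \in C_G(g)$; thus ``$Z(g)$ has a neighbor'' says exactly ``there is $h \in C_G(g) \setminus Z(G)$ with $Z(h) \ne Z(g)$,'' and negating this is precisely (5) $\Leftrightarrow$ (7). For (5) $\Rightarrow$ (3): given $h$ with $C_G(h) \cap C_G(g) > Z(G)$, choose $k$ in that intersection outside $Z(G)$; then $k \in C_G(g) \setminus Z(G)$ forces $C_G(k) = C_G(g)$ (by (5) and Lemma \ref{star 1}(2)), and $k \in C_G(h)$ then gives $h \in C_G(k) = C_G(g)$, so applying (5) once more yields $C_G(h) = C_G(g)$ --- this is (3). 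Conversely, any $h \in C_G(g)\setminus Z(G)$ lies in $C_G(h) \cap C_G(g)$, so that intersection exceeds $Z(G)$, and (3) forces $C_G(h) = C_G(g)$, i.e.\ $Z(h) = Z(g)$, giving (5).

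Finally I would bring in (1) and (2). Corollary \ref{sub2} gives (1) $\Leftrightarrow$ (2) directly. For (2) $\Rightarrow$ (5): $C_G(g)$ abelian means $C_G(g) = Z(g)$ by Lemma \ref{abel cent}, so if (5) failed there would be $h \in C_G(g)\setminus Z(G) = Z(g) \setminus Z(G)$ with $Z(h) \ne Z(g)$; Lemma \ref{three one} then gives $Z(h) < Z(g)$, and Lemma \ref{sub1}, applied with the roles of $g$ and $h$ interchanged, makes $Z(g)$ subordinate to $Z(h)$, contradicting the independence of $Z(g)$. For the reverse direction I note that (5) already produced $C_G(g) = Z(g)$ above (so $C_G(g)$ is abelian), and (7) makes $Z(g)$ isolated, hence independent since a subordinate vertex must by definition be a neighbor of something; so (2) holds. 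I do not expect a genuine obstacle here: each implication is a one- or two-line deduction from the lemmas already available. The step requiring the most care is (2) $\Rightarrow$ (5), where one must apply Lemma \ref{sub1} with $g$ and $h$ swapped to convert the proper descent $Z(h) < Z(g)$ into subordination of $Z(g)$, keeping in mind that ``independent'' forbids only subordination, so isolated vertices are automatically independent.
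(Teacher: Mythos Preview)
Your proof is correct. The ingredients are the same as the paper's---Lemmas \ref{three}, \ref{three one}, \ref{abel cent}, \ref{cent equality}, \ref{star 1}, \ref{sub1} and Corollary \ref{sub2}---but the logical organisation differs. The paper runs a cycle $(2)\Rightarrow(3)\Rightarrow(4)\Leftrightarrow(5)\Rightarrow(6)\Rightarrow(2)$ and attaches $(7)$ via $(3)\Rightarrow(7)\Rightarrow(4)$, whereas you route everything through $(5)$ as a hub. Two concrete differences: first, your direct equivalence $(5)\Leftrightarrow(7)$ (by reading ``$Z(g)$ has a neighbour'' as ``some $h\in C_G(g)\setminus Z(G)$ has $Z(h)\neq Z(g)$'') is cleaner than the paper's detour through $(3)$. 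Second, your link $(2)\Rightarrow(5)$ via Lemma \ref{sub1} (turning $Z(h)<Z(g)$ into subordination of $Z(g)$) is shorter than the paper's $(6)\Rightarrow(2)$, which argues that if $C_G(g)<C_G(h)$ then $(6)$ forces $Z(h)=C_G(g)$, so $C_G(h)$ centralises $Z(g)$ and hence $C_G(h)\le C_G(Z(g))=C_G(g)$, a contradiction. Your hub-and-spoke layout makes the equivalences more transparent; the paper's cycle is a little more ad hoc but equally valid.
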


\begin{proof}
The fact that (1) and (2) are equivalent is a consequence of Corollary \ref{sub2}.  Suppose (2).  Consider $h \in G \setminus Z(G)$.  If $h \in C_G (g)$, then since $C_G (g)$ is abelian, we have $C_G (g) \le C_G (h)$, and since $C_G (g)$ is independent, we obtain by Corollary \ref{sub2} that $C_G (g) = C_G (h)$.  Suppose $h \in G \setminus C_G (g)$.  If $C_G (g) \cap C_G (h) = Z (G)$, then we are done.  Thus, we suppose that $C_G (g) \cap C_G (h) > Z(G)$.  Thus, we can find $k \in (C_G (g) \cap C_G (h)) \setminus Z(G)$.  Since $k \in C_G (g)$, we have $C_G (k) =
C_G (g)$.  On the other hand, we now have $h \in C_G (k) = C_G (g)$ which is a contradiction.  Thus, we must have $C_G (g) \cap C_G (h) = Z(g)$.  This proves (3).

Suppose (3).  Consider $h \in C_G (g) \setminus Z(G)$.  It follows that $h \in C_G (g) \cap C_G (h)$, so $C_G (g) \cap C_G (h) > Z(G)$.  This implies that $C_G (h) = C_G (g)$.  This proves (4).  Notice that (4) and (5) are equivalent by Lemma \ref{lem:corres}.  Suppose (5).  For any $h \in C_G (g) \setminus Z(G)$, we have $h \in Z(h) = Z(g) = Z(C_G (g))$.  Since $Z(G) \le Z(C_G (g))$, we conclude that $C_G (g) \le Z(g)$.  Hence, $C_G (g) = Z(g) = Z(h)$.  This proves (6).  Suppose (6).  Since $g \in C_G (g) \setminus Z (G)$, we have $C_G (g) = Z(g) = Z(C_G (g))$ which implies that $C_G (g)$ is abelian.  Suppose $C_G (g)$ is a subordinate vertex.  By Lemma \ref{sub1}, there is an element $h \in G \setminus Z(G)$ so that $C_G (g) < C_G (h)$.  We have $Z(g) \le C_G (h)$, and by Lemma \ref{three}, this implies that $Z(h) \le C_G (g)$.  By (6), this implies that $Z(h) = C_G (g)$. It follows that $C_G (g)$ centralizes $C_G (h)$, and so, $C_G (h)$ centralizes $C_G (g) = Z(g)$.  Since $C_G (Z(g)) = C_G (g)$, we deduce that $C_G (h) \le C_G (g)$.  We now obtain $C_G (h) = C_G (g)$, which contradicts $C_G (g)$ is a subordinate vertex.  Therefore, $C_G (g)$ is an independent vertex.  This proves (2). 

We now show that (3) implies (7) and (7) implies (4).  Notice that (3) implies for all $h \in G \setminus Z(G)$ that either $C_G (g) = C_G (h)$ or $C_G (g) \cap C_G (h) = Z(G)$.  This implies that if $C_G (h) \ne C_G (g)$, then $C_G (g)$ and $C_G (h)$ are not adjacent in $\Gamma_{\mathcal Z} (G)$.  Consequently, $C_G (g)$ is an isolated vertex in $\Gamma_{\mathcal Z} (G)$.  This is (7).  Conversely, suppose (7).  Let $h \in C_G (g) \setminus Z(G)$, then $G_G (g)$ and $C_G (h)$ would be adjacent if they were different.  Thus, we must have $C_G (h) = C_G (g)$.  This implies (4).
\end{proof}

Hence, $Z$ is an isolated vertex in $\Gamma_{\mathcal Z} (G)$ if and only if $C_G (Z)$ is abelian and maximal among the subgroups in $\mathcal C(G)$.   

Recall that an empty graph is a graph with no edges.  One consequence of Lemma \ref{isolated} is that if $\Gamma_{\mathcal Z} (G)$ is an empty graph, then $C_G (x)$ is abelian for all $x \in G \setminus Z(G)$.  
%
We next show that $G$ is a CA-group if and only if $\Gamma_{\mathcal Z} (G)$ is empty.

\begin{corollary} \label{CA group graph}
Let $G$ be a group.  Then the following are equivalent:
\begin{enumerate}
\item $\Gamma_{\mathcal Z} (G)$ is an empty graph.
\item Every connected component of $\mathfrak{C} (G)$ is a complete graph.
\item $G$ is a CA-group.
\end{enumerate}  
\end{corollary}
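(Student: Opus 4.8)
The plan is to prove the three equivalences as $(1)\Leftrightarrow(2)$, $(1)\Rightarrow(3)$, and $(3)\Rightarrow(1)$, leaning on Corollary \ref{replacement} for the first and on Lemmas \ref{abel cent}, \ref{cent equality}, and \ref{isolated} for the rest. The equivalence $(1)\Leftrightarrow(2)$ is immediate from Corollary \ref{replacement}(2): the bijection there sends connected components of $\mathfrak{C}(G)$ that are complete graphs to isolated vertices of $\Gamma_{\mathcal{Z}}(G)$ and preserves the diameters of all other components, so every component of $\mathfrak{C}(G)$ is complete exactly when every component of $\Gamma_{\mathcal{Z}}(G)$ is a single vertex carrying no edge, i.e., exactly when $\Gamma_{\mathcal{Z}}(G)$ is empty.

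For $(1)\Rightarrow(3)$, I would just invoke Lemma \ref{isolated}: if $\Gamma_{\mathcal{Z}}(G)$ is empty, then every vertex $Z(g)$ is isolated, so condition (7) of that lemma holds for each $g\in G\setminus Z(G)$, hence so does condition (1), which says $C_G(g)$ is abelian; this is precisely the definition of a CA-group. (This is the remark already stated after Lemma \ref{isolated}.)

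For $(3)\Rightarrow(1)$, assume $G$ is a CA-group and suppose toward a contradiction that $\Gamma_{\mathcal{Z}}(G)$ has an edge between distinct vertices $Z(g)$ and $Z(h)$. Then $Z(h)\le C_G(Z(g))=C_G(g)$ by Lemma \ref{cent equality}, and since $C_G(g)$ is abelian, Lemma \ref{abel cent} gives $C_G(g)=Z(g)$, so $Z(h)\le Z(g)$. The edge relation on $\Gamma_{\mathcal{Z}}(G)$ is symmetric, so running the same argument with $g$ and $h$ interchanged yields $Z(g)\le Z(h)$, hence $Z(g)=Z(h)$ --- a contradiction. Thus $\Gamma_{\mathcal{Z}}(G)$ has no edges. (Alternatively, one verifies condition (4) of Lemma \ref{isolated}: for $h\in C_G(g)\setminus Z(G)$, the abelian group $C_G(g)$ centralizes $h$, so $C_G(g)\le C_G(h)$, and Lemma \ref{abel cent} makes both of these maximal abelian subgroups, forcing $C_G(g)=C_G(h)$.)

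I do not expect a real obstacle: essentially all the work has been pushed into Lemmas \ref{abel cent} and \ref{isolated} and Corollary \ref{replacement}. The only things to be careful about are choosing the right equivalent condition from Lemma \ref{isolated} in each direction and using the elementary fact that two distinct maximal abelian subgroups of a group cannot be nested.
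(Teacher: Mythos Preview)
Your proof is correct and follows essentially the same approach as the paper: $(1)\Leftrightarrow(2)$ via Corollary \ref{replacement}, and $(1)\Leftrightarrow(3)$ via Lemma \ref{isolated}. Your primary argument for $(3)\Rightarrow(1)$ (the symmetry argument forcing $Z(g)=Z(h)$) is a small direct shortcut, while your parenthetical alternative---showing each $C_G(g)$ is abelian and maximal in $\mathcal{C}(G)$, then invoking Lemma \ref{isolated}---is exactly what the paper does.
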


\begin{proof}
We have seen that Lemma \ref{isolated} implies that if $\Gamma_{\mathcal Z} (G)$ is an empty graph, then $G$ is a CA-group.  Assume that $G$ is a CA-group.  Notice that $C_G (x)$ abelian implies that $C_G (x)$ is maximal among abelian subgroups of $G$, since if $A$ is an abelian subgroup of $G$ that contains $x$, then $A \le C_G (x)$.  Since all centralizers are abelian, this implies that $C_G (x)$ is maximal in $\mathcal C (G)$.  Applying Lemma \ref{isolated} again, we see that this implies that $Z (x)$ is an isolated vertex.  Since $x$ was arbitrary, we deduce that $\Gamma_{\mathcal Z} (G)$ is an empty graph.
\end{proof}

We continue with a result that holds for any group $G$.  We find another condition that guarantees an isolated vertex.

\begin{lemma} \label{isolated prime index}
Suppose $G$ is a group, $g \in G$, and $p$ is a prime.  If $|C_G(g) : Z(G)| = p$, then $C_G(g)$ is an isolated vertex of $\Gamma_{\mathcal Z} (G)$.
\end{lemma}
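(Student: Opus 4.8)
The plan is to verify condition (4) of Lemma~\ref{isolated}, that is, to show $C_G(h) = C_G(g)$ for every $h \in C_G(g) \setminus Z(G)$; by the equivalence of (4) and (7) in that lemma this says precisely that $Z(g)$ (equivalently $C_G(g)$) is an isolated vertex of $\Gamma_{\mathcal Z}(G)$, which is the assertion. Before doing this I would note that the hypothesis forces $g \notin Z(G)$, so that $C_G(g)$ is indeed a vertex: if $g \in Z(G)$ then $C_G(g) = G$ and $|G:Z(G)| = p$, whence $G/Z(G)$ is cyclic and $G$ is abelian, contradicting $|G:Z(G)| = p$.

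The key step is to exploit that $|C_G(g):Z(G)| = p$ makes $C_G(g)/Z(G)$ cyclic of order $p$. Fixing $h \in C_G(g) \setminus Z(G)$, the image of $h$ generates this quotient, so $C_G(g) = \langle h \rangle Z(G)$; being generated by $h$ together with central elements, $C_G(g)$ is abelian, and in particular $C_G(g) \le C_G(h)$. For the reverse inclusion, if $x \in C_G(h)$ then $x$ commutes with $h$ and with every element of $Z(G)$, hence with every element of $\langle h \rangle Z(G) = C_G(g)$, and in particular with $g$; thus $x \in C_G(g)$ and $C_G(h) \le C_G(g)$. Combining the two inclusions gives $C_G(h) = C_G(g)$, which is condition (4).

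I do not anticipate a real obstacle in this argument; the only points requiring a little care are confirming at the outset that $g \notin Z(G)$ (so the statement is not vacuous and $C_G(g)$ is a legitimate vertex) and invoking Lemma~\ref{isolated} in the correct direction, using (4) $\Rightarrow$ (7). One could alternatively deduce abelianness of $C_G(g)$ and the containment $C_G(g) \le C_G(h)$ from Lemmas~\ref{three} and~\ref{star 1}, but the cyclic-quotient observation is the most direct route.
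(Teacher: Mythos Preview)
Your argument is correct. Both your proof and the paper's go through Lemma~\ref{isolated}, but you verify a different one of its equivalent conditions: you establish condition~(4) directly by writing $C_G(g) = \langle h \rangle Z(G)$ and observing that $C_G(h) = C_G(\langle h \rangle Z(G)) = C_G(C_G(g)) \ni g$, whereas the paper checks condition~(1)/(2), namely that $C_G(g)$ is abelian and a maximal centralizer, the latter via the order-reversing correspondence of Lemma~\ref{star 1} (a strict containment $C_G(g) < C_G(h)$ would force $Z(G) < Z(h) < Z(g) = C_G(g)$, impossible when $|C_G(g):Z(G)| = p$). Your route is marginally more self-contained since it avoids invoking Lemma~\ref{star 1}, and your preliminary check that $g \notin Z(G)$ is a detail the paper leaves implicit; the paper's version is a touch shorter once the $Z$--$C$ correspondence is in hand.
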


\begin{proof}
Note that $|C_G(g) : Z(G)| = p$ implies that $C_G(g)$ is abelian.  Hence, by Lemma \ref{isolated} it suffices to show that $C_G(g)$ is a maximal centralizer.  There can be no $C_G(g) < C_G(h) < G$, as this would give $Z(G) < Z(h) < Z(g)$, but $Z(G)$ has index $p$ in $Z(g) = C_G(g)$.
\end{proof}


In this next lemma, we consider another possible way to obtain a connected component of $\Gamma_{\mathcal Z} (G)$.  This should be viewed as a generalization of Lemma \ref{isolated}.  A group with such a connected component is SmallGroup(128,227) and at least 103 other groups of order $128$.

\begin{lemma} \label{independent vertex component}
Let $G$ be a group.  Let $g \in G \setminus Z(G)$.  The following are equivalent:
\begin{enumerate}
\item For all $h \in G \setminus Z(G)$, either $C_G (h) \le C_G (g)$ or $C_G (h) \cap C_G (g) = Z(G)$.	
\item $C_G (h) \le C_G (g)$ for all $h \in C_G (g) \setminus Z(G)$.
\item $Z (g) \le Z (h)$ for all $h \in C_G (g) \setminus Z(G)$.
\item $Z (g)$ is an independent vertex in $\Gamma_{\mathcal{Z}} (G)$ and $Z(g)$ and all of its subordinate vertices form a connected component of $\Gamma_{\mathcal Z} (G)$.
\item $C_G (g) \setminus Z(G)$ is a connected component in ${\mathfrak C} (G)$.
\end{enumerate}
\end{lemma}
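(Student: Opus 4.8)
The plan is to prove this lemma by establishing a cycle of implications among the five conditions, roughly in the order $(1) \Rightarrow (2) \Rightarrow (3) \Rightarrow (4) \Rightarrow (5) \Rightarrow (1)$, mimicking the structure of the proof of Lemma \ref{isolated} since this is advertised as its generalization. The equivalence of (2) and (3) is immediate from Corollary \ref{lem:corres} (the order-reversing correspondence between $\mathcal{C}(G)$ and $\mathcal{Z}(G)$), so really the work is in the remaining implications. For $(1) \Rightarrow (2)$: given $h \in C_G(g) \setminus Z(G)$, note $g, h \in C_G(g) \cap C_G(h)$ forces $C_G(g) \cap C_G(h) > Z(G)$, so the second alternative in (1) fails and we must have $C_G(h) \le C_G(g)$. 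The reverse-type implication, that (2) forces (1), will come at the end of the cycle: given arbitrary $h \in G \setminus Z(G)$, if $C_G(g) \cap C_G(h) > Z(G)$ pick $k$ in that intersection outside $Z(G)$; then $k \in C_G(g) \setminus Z(G)$, so by (2) (or rather the version of (2) we've cycled back to) $C_G(k) \le C_G(g)$, and since $h \in C_G(k)$ we get $h \in C_G(g)$, hence (using (2) applied to $h$) $C_G(h) \le C_G(g)$.

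For $(3) \Rightarrow (4)$: condition (3) says $Z(g) \le Z(h)$ for every $h \in C_G(g)\setminus Z(G)$, i.e. $Z(g)$ is contained in (equivalently, by Lemma \ref{star 1}, equal to or subordinate-related to) every $Z(h)$ attached to a neighbor; in particular no $Z(h)$ can be properly below $Z(g)$, so $Z(g)$ is minimal in $\mathcal{Z}(G)$ and hence independent by Corollary \ref{independent max min}. To see that $Z(g)$ together with its subordinate vertices forms a connected component, I would use Corollary \ref{connected comp union} or Lemma \ref{graph comp}: the subordinate vertices of $Z(g)$ are exactly the $Z(h)$ with $Z(g) < Z(h)$, which by Lemma \ref{sub1} are exactly those with $C_G(h) < C_G(g)$, i.e. the $Z(h)$ for $h$ ranging over $C_G(g) \setminus Z(G)$ (using (3) to know every such $h$ gives $Z(g) \le Z(h)$, and hence equality or proper containment). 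The union of these $Z(h)$ over $h \in C_G(g)\setminus Z(G)$ is $C_G(g)$ by the Corollary in Section 2 (the one asserting $C_G(a) = \bigcup_{b \in C_G(a)\setminus Z(G)} Z(b)$), and the union of the corresponding centralizers $C_G(h)$ is also $C_G(g)$ since each $C_G(h) \le C_G(g)$ by (2) while $C_G(g)$ itself occurs; so Corollary \ref{connected comp union} says this set is a connected component.

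For $(4) \Rightarrow (5)$: translating through the bijection of Lemma \ref{tilde} and the diameter/component correspondence of Corollary \ref{replacement}, the connected component $\{Z(g)\} \cup \{\text{subordinates}\}$ of $\Gamma_{\mathcal{Z}}(G)$ pulls back to a connected component of $\mathfrak{C}(G)$; I need to identify its vertex set as $C_G(g) \setminus Z(G)$. A vertex $x \in G \setminus Z(G)$ lies in the pullback iff $Z(x) \in \{Z(g)\} \cup \{\text{subordinates of } Z(g)\}$, iff $Z(g) \le Z(x)$, iff (Lemma \ref{three one} / Lemma \ref{sub1}) $C_G(x) \le C_G(g)$; and by (2)$\Leftrightarrow$(4)-type reasoning this holds precisely when $x \in C_G(g)$. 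Finally $(5) \Rightarrow (1)$: if $C_G(g) \setminus Z(G)$ is a connected component of $\mathfrak{C}(G)$ and $h \in G\setminus Z(G)$ has $C_G(g) \cap C_G(h) > Z(G)$, then some $k$ in the intersection outside $Z(G)$ is a common neighbor of $g$ and $h$ in $\mathfrak{C}(G)$, forcing $h$ into the same component, i.e. $h \in C_G(g)$; then every element of $C_G(h)$ commutes with $h \in C_G(g)\setminus Z(G)$, so (running through the component again, or noting $C_G(h) \setminus Z(G)$ lies in the same component) $C_G(h) \le C_G(g)$, giving (1).

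The main obstacle I anticipate is the bookkeeping in $(3) \Rightarrow (4)$ and $(4) \Rightarrow (5)$: one has to be careful that "the subordinate vertices of $Z(g)$" is exactly the right set and that its union recovers $C_G(g)$ rather than something smaller, and one has to invoke the correspondence results (Lemmas \ref{tilde}, \ref{sub1}, Corollaries \ref{replacement}, \ref{connected comp union}) in the correct direction. The genuinely group-theoretic content is light — it's all packaged in the earlier lemmas on the $\mathcal{Z}$–$\mathcal{C}$ correspondence and on covering centralizers by the $Z(b)$'s — so the proof should be a careful but routine assembly, with the only subtlety being to make sure each implication's hypothesis is actually strong enough for the step (e.g. that (5) really does hand us that $h \in C_G(g)$ forces $C_G(h) \le C_G(g)$, which uses connectivity of the component plus that every element of $C_G(h)$ is a neighbor of the element $h$ already in the component).
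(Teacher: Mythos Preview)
Your proposal is correct and follows the same cycle $(1)\Rightarrow(2)\Rightarrow(3)\Rightarrow(4)\Rightarrow(5)\Rightarrow(1)$ that the paper uses, with essentially identical arguments for $(1)\Rightarrow(2)$, $(2)\Leftrightarrow(3)$, and $(5)\Rightarrow(1)$. The only noticeable difference is in the intermediate steps: for $(3)\Rightarrow(4)$ you invoke the union criterion of Corollary~\ref{connected comp union} (checking $\bigcup Z_i=\bigcup C_i$), and for $(4)\Rightarrow(5)$ you pull the component back through the quotient correspondence of Lemma~\ref{tilde}/Corollary~\ref{replacement}; the paper instead argues both steps directly from the definition of subordination (every neighbor of $Z(g)$ is subordinate, hence every neighbor of a neighbor is again a neighbor of $Z(g)$, so the component is exactly $\{Z(g)\}\cup\{\text{subordinates}\}$, and the analogous statement in $\mathfrak{C}(G)$). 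Your route is slightly more machinery-heavy but equally valid; just be sure in your $(4)\Rightarrow(5)$ step to make explicit the easy implication $(4)\Rightarrow(2)$ (if $h\in C_G(g)\setminus Z(G)$ then $Z(h)$ lies in the component, hence is $Z(g)$ or subordinate to it, hence $C_G(h)\le C_G(g)$) rather than leaving it as ``$(2)\Leftrightarrow(4)$-type reasoning.''
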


\begin{proof}
Suppose (1).  If $h \in C_G (g) \setminus Z (G)$, then $C_G (g) \cap C_G (h) > Z (G)$, and so, we must have $C_G (h) \le C_G (g)$ which is (2).  Now suppose (2).  Consider $h \in C_G (g) \setminus Z(G)$.  We must have $C_G (h) \le C_G (g)$.  This implies that $Z(h) \ge Z(g)$ which is (3).  

Suppose (3).   Suppose $Z(h)$ is adjacent to $Z(g)$.  We have $h \in C_G (g) \setminus Z(G)$.  This implies that $Z(g) \le Z(h)$.  By Lemma \ref{sub1}, this implies that $Z(h)$ is subordinate to $Z(g)$.  We conclude that $Z (g)$ is an independent vertex.  Note that since $Z(h)$ is subordinate to $Z (g)$, every neighbor of $Z (h)$ is a neighbor of $Z (g)$.  It follows that the connected component containing $Z (g)$ will consist of $Z(g)$ and its neighbors which are its subordinate vertices.  This proves (4).

Suppose (4).  Let $h$ be a neighbor of $g$.  It follows that $Z(h)$ is a neighbor of $Z(g)$, and so, $Z(h)$ is subordinate to $Z(g)$.   It follows that $h$ is subordinate to $g$.  Hence, all of the neighbors of $h$ are neighbors of $g$.  We conclude that every vertex of the connected component of $g$ is a neighbor of $g$, and so, the connected component of $g$ is the set $C_G (g) \setminus Z(G)$.  This proves (5).

Suppose (5).  Assume there exists an element $h \in G \setminus Z(G)$ so that $C_G (g) \cap C_G (h) > Z(g)$.  By Lemma \ref{intersection cent}, this implies that $h$ and $g$ are in the same connected component.  Hence, $h \in C_G (g)$. Suppose $k \in C_G (h) \setminus Z(G)$.  It follows that $k$ is adjacent to $h$, and so, $k$ is in the same connected component as $g$, and this implies that $k \in C_G (g)$.  This shows that $C_G (h) \le C_G (g)$ which proves (1). 
\end{proof}

\begin{corollary} \label{ind diam}
Let $G$ be a group and let $\Xi$ be a connected component of $\Gamma_{\mathcal{Z}} (G)$.  If there is an element $g \in G \setminus Z(G)$ so that $Z(g)$ is an independent vertex in $\Gamma_{\mathcal{Z}} (G)$ that is not an isolated vertex and $Z(g)$ and all of its subordinate vertices make up $\Xi$, then $\Xi$ has diameter $2$. 
\end{corollary}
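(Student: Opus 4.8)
The plan is to bound the diameter of $\Xi$ from above and from below separately, in each case reading off what is needed from the structural description of $\Xi$ supplied by the hypothesis (which is precisely condition (4) of Lemma \ref{independent vertex component}).

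For the upper bound, I would note that by hypothesis every vertex of $\Xi$ other than $Z(g)$ is a subordinate vertex of $Z(g)$, and that a subordinate vertex is by definition a neighbor of the vertex it is subordinate to. Hence $Z(g)$ is adjacent to every other vertex of $\Xi$, so every vertex of $\Xi$ is at distance at most $1$ from $Z(g)$. The triangle inequality through $Z(g)$ then shows that any two vertices of $\Xi$ are at distance at most $2$, so the diameter of $\Xi$ is at most $2$.

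For the lower bound, I would invoke the hypothesis that $Z(g)$ is \emph{not} an isolated vertex: it therefore has a neighbor $Z(h)$, which lies in its connected component $\Xi$ and is distinct from $Z(g)$, so $\Xi$ has at least two vertices. If the diameter of $\Xi$ were at most $1$, then, since $\Xi$ has at least two vertices, every pair of vertices of $\Xi$ would be adjacent and $\Xi$ would be a clique; but by Corollary \ref{replacement}(2) any connected component of $\Gamma_{\mathcal Z}(G)$ that is a clique is a single isolated vertex, contradicting that $\Xi$ has at least two vertices. (Alternatively, one can argue directly from the definition of "subordinate": since $Z(h)$ is subordinate to $Z(g)$, the vertex $Z(g)$ has a neighbor $Z(k)$, distinct from $Z(h)$, that is not a neighbor of $Z(h)$; then $Z(h)$ and $Z(k)$ are two vertices of $\Xi$, each adjacent to $Z(g)$ but not to each other, hence at distance exactly $2$.) Thus the diameter of $\Xi$ is at least $2$, and combining with the upper bound gives that the diameter of $\Xi$ equals $2$.

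There is no substantial obstacle here; the statement is a short bookkeeping consequence of the definition of a subordinate vertex together with Corollary \ref{replacement}. The only point that deserves care is to make sure the "not isolated" hypothesis is genuinely used—so that $\Xi$ has at least two vertices and the degenerate possibilities (diameter $0$, or $\Xi$ a single vertex) are excluded—which is exactly what gives the neighbor $Z(h)$ used in the lower bound.
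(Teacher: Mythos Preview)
Your proof is correct and the upper bound is handled exactly as in the paper: every vertex of $\Xi$ is adjacent to $Z(g)$, so the diameter is at most $2$.

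For the lower bound you take a slightly different route. The paper argues group-theoretically: since $Z(g)$ is not isolated it has a subordinate vertex $Z(h)$, so $C_G(h) < C_G(g)$, and Lemma~\ref{sub3} then forces $C_G(g)$ to be nonabelian; but diameter $1$ would (via the corresponding component $C_G(g)\setminus Z(G)$ in $\mathfrak{C}(G)$) make $C_G(g)$ abelian, a contradiction. You instead stay entirely inside the graph: either you invoke Corollary~\ref{replacement}(2) to rule out a multi-vertex clique component, or---more cleanly---you read off directly from clause (3) of the definition of ``subordinate'' a neighbor $Z(k)$ of $Z(g)$ that is not a neighbor of $Z(h)$, giving two vertices at distance exactly $2$. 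Your second alternative is the most self-contained of the three arguments, since it uses nothing beyond the definition of subordinate; the paper's version has the mild advantage of tying the bound back to the structural fact that $C_G(g)$ is nonabelian.
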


\begin{proof}
Suppose first that there is an element $g \in G \setminus Z (G)$ so that $Z(g)$ is not an isolated vertex and $\Xi$ is made up of $Z (g)$ and its subordinate vertices.  Thus, if $Z(h) \in \Xi$, then $Z(g) \le Z(h)$, so $Z(h) \le C_G (h) \le C_G (g)$, and so, $Z(h)$ is adjacent to $Z(g)$.  This shows that every element in $\Xi$ is adjacent to $Z(g)$, and thus, $\Xi$ has diameter at most $2$.  Notice that if $\Xi$ has diameter $1$ then $C_G (g)$ is abelian, and this contradicts Lemma \ref{sub3}.  Hence, $\Xi$ has diameter $2$.
\end{proof}

Notice that when $\Xi$ is an isolated vertex $Z(g)$, the set $\displaystyle \bigcup_{Z(x) \in \Xi} = C_G (g)$ is a subgroup and when $\Xi$ is the independent vertex $Z(g)$ and its subordinate vertices, the set $\displaystyle \bigcup_{Z(x) \in \Xi} C_G(x) = C_G (g)$ is also a subgroup.  The question arises, can one have any other cases where $\displaystyle \bigcup_{Z(x) \in \Xi} C_G(x)$ is a subgroup?  

Suppose now that $G$ is a nonabelian $p$-group.  We obtain an easy condition which implies that a centralizer is abelian.  

\begin{lemma} \label{abel cent index}
Let $p$ be a prime, and let $G$ be a $p$-group.  If $x \in G \setminus Z(G)$ satisfies $|C_G (x):Z(G)| \le p^2$, then $C_G (x)$ is abelian.
\end{lemma}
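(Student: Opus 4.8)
The plan is to set $C = C_G(x)$ and exploit the sandwich $Z(G) \le Z(C)$ together with the fact that $x$ itself lies in $Z(C)$. Indeed, $x$ centralizes every element of $C_G(x) = C$ by definition, so $x \in Z(C)$; and every element of $Z(G)$ centralizes all of $G$, hence all of $C$, so $Z(G) \le Z(C)$. Thus $\langle Z(G), x \rangle \le Z(C)$.

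Next I would use the hypothesis $x \notin Z(G)$ to get a lower bound on $|Z(C):Z(G)|$. Since $x \in Z(C) \setminus Z(G)$, the subgroup $\langle Z(G), x\rangle$ properly contains $Z(G)$, so $|Z(C):Z(G)| \ge |\langle Z(G),x\rangle : Z(G)| \ge p$ (here is where $G$ being a $p$-group — or at least the relevant index being a $p$-power — is used, to say the proper containment forces index at least $p$). Combining this with the hypothesis $|C:Z(G)| \le p^2$ via the tower $Z(G) \le Z(C) \le C$ gives
$$|C:Z(C)| = \frac{|C:Z(G)|}{|Z(C):Z(G)|} \le \frac{p^2}{p} = p.$$

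Finally I would invoke the standard fact that a group whose central quotient is cyclic must be abelian: since $|C:Z(C)| \le p$, the quotient $C/Z(C)$ is cyclic, forcing $C = Z(C)$, i.e.\ $C_G(x)$ is abelian. There is no real obstacle here; the only point requiring a moment's care is the lower bound $|Z(C):Z(G)| \ge p$, which relies on $x$ genuinely escaping $Z(G)$ and on indices between subgroups of a $p$-group being powers of $p$.
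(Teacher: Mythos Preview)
Your proof is correct and follows essentially the same approach as the paper: observe that $x \in Z(C_G(x))$ (together with $Z(G) \le Z(C_G(x))$) to conclude $|C_G(x):Z(C_G(x))| \le p$, and then invoke the standard fact that a group with cyclic central quotient is abelian. The paper's version is simply more terse, but the argument is the same.
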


\begin{proof}
We begin by observing that $x \in Z(C_G (x))$.  It follows that $|C_G (x):Z(C_G (x))| \le p$.  This implies that $C_G (x)$ is central-by-cyclic, and it is well-known that this implies that $C_G(x)$ is abelian.
\end{proof}

We now show that $p$-groups where $|G:Z(G)| \le p^3$ are CA-groups.  Notice that this gives Theorem \ref{intro thm 3} (1) in the context of $\mathfrak{C} (G)$.

\begin{lemma} \label{CA group}
Let $p$ be a prime, and suppose that $G$ is a $p$-group.  If $|G:Z(G)| \le p^3$, then $G$ is a CA-group.
\end{lemma}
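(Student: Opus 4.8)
The plan is to reduce immediately to Lemma \ref{abel cent index}. Fix $x \in G \setminus Z(G)$; the goal is to show $C_G(x)$ is abelian, since $x$ was arbitrary this gives that $G$ is a CA-group. The key observation is that $C_G(x)$ is a \emph{proper} subgroup of $G$: if $C_G(x) = G$ then $x \in Z(G)$, contrary to hypothesis. Since $Z(G) \le C_G(x) < G$ and $G$ is a $p$-group, we have $|G:C_G(x)| \ge p$.

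From this I would compute $|C_G(x):Z(G)| = |G:Z(G)| \,/\, |G:C_G(x)| \le p^3 / p = p^2$, using the hypothesis $|G:Z(G)| \le p^3$. Now Lemma \ref{abel cent index} applies directly and yields that $C_G(x)$ is abelian. Since $x \in G \setminus Z(G)$ was arbitrary, every such centralizer is abelian, which is exactly the statement that $G$ is a CA-group.

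There is essentially no obstacle here: the argument is a two-line index computation feeding into the previously established Lemma \ref{abel cent index} (which itself rests on the standard fact that a central-by-cyclic group is abelian). The only point requiring a word of care is the strict inequality $C_G(x) < G$, which is immediate from $x \notin Z(G)$.
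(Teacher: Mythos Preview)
Your proof is correct and follows essentially the same approach as the paper: observe $C_G(x) < G$ for $x \notin Z(G)$, deduce $|C_G(x):Z(G)| \le p^2$, and apply Lemma \ref{abel cent index}. The paper's version is slightly terser (writing $|C_G(x):Z(G)| < |G:Z(G)| = p^3$ directly rather than computing via $|G:C_G(x)| \ge p$), but the content is identical.
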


\begin{proof}
Consider $x \in G \setminus Z(G)$.  We know that $C_G (x) < G$, so $|C_G (x):Z(G)| < |G:Z(G)| = p^3$.  Applying Lemma \ref{abel cent index}, we see that $C_G (x)$ is abelian.  Since $x$ was arbitrary, we conclude that $G$ is a CA-group.
\end{proof}

We now obtain a bound on the diameter of $\Gamma_{\mathcal{Z}} (G)$ in terms of $|G:Z(G)|$.  We mention that this is Theorem \ref{intro thm 3} (2) for $\mathfrak{C} (G)$.

\begin{theorem}\label{Z(G) le p^4 or p^5}
Let $p$ be a prime, and let $G$ be a $p$-group.  If $|G:Z(G)|$ is $p^4$ or $p^5$, then either $G$ is a CA-group or else the set of nonisolated vertices in $\Gamma_{\mathcal Z} (G)$ is connected and has diameter at most $4$.
\end{theorem}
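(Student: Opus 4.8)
The plan is to single out the vertices $Z(h)$ for which $C_G(h)$ is nonabelian, show that every nonisolated vertex is adjacent to one of these, and then show that any two such ``nonabelian'' vertices lie at distance at most $2$ by a crude order count. The two-step structure is essential: the count fails if one allows vertices $Z(h)$ with $C_G(h)$ abelian, since for those $|C_G(h):Z(G)|$ can be as small as $p$. So, assume $G$ is not a CA-group, so by definition there is some $x\in G\setminus Z(G)$ with $C_G(x)$ nonabelian, and set $\mathcal N=\{\,Z(h)\mid h\in G\setminus Z(G),\ C_G(h)\text{ nonabelian}\,\}$. Then $\mathcal N\neq\emptyset$; each vertex of $\mathcal N$ is nonisolated by Lemma \ref{isolated}; and by Lemma \ref{abel cent index} we have $|C_G(h):Z(G)|\ge p^3$ for every $Z(h)\in\mathcal N$ (this index is a power of $p$ lying strictly between those of $Z(G)$ and $G$, so it is $p^3$, $p^4$, or, when $|G:Z(G)|=p^5$, possibly also... in any case at least $p^3$).

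\emph{Step 1: every nonisolated vertex is within distance $1$ of $\mathcal N$.} If $Z(g)$ is nonisolated and $C_G(g)$ is nonabelian, then $Z(g)\in\mathcal N$. If $Z(g)$ is nonisolated but $C_G(g)$ is abelian, then by Lemma \ref{isolated} together with Corollary \ref{sub2}, $Z(g)$ fails to be an independent vertex of $\Gamma_{\mathcal Z}(G)$, hence is subordinate to some vertex $Z(b)$; by Lemma \ref{sub1} this gives $C_G(g)<C_G(b)$, so $C_G(b)$ is nonabelian by Lemma \ref{sub3}, i.e.\ $Z(b)\in\mathcal N$, and $Z(g)$ is adjacent to $Z(b)$.

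\emph{Step 2: any two vertices of $\mathcal N$ are at distance at most $2$.} Let $Z(h_1),Z(h_2)\in\mathcal N$. If $C_G(h_1)\cap C_G(h_2)=Z(G)$, then, using the set-product identity $|C_G(h_1)C_G(h_2)|=|C_G(h_1)|\,|C_G(h_2)|/|Z(G)|$ together with $|C_G(h_i):Z(G)|\ge p^3$, we would get $|C_G(h_1)C_G(h_2)|\ge |Z(G)|p^{6}>|Z(G)|p^{5}\ge |G|$, which is impossible since $C_G(h_1)C_G(h_2)\subseteq G$. Hence $C_G(h_1)\cap C_G(h_2)>Z(G)$, and Lemma \ref{intersection cent} yields that $Z(h_1)$ and $Z(h_2)$ are connected and at distance at most $2$ in $\Gamma_{\mathcal Z}(G)$. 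Combining Steps 1 and 2, any two nonisolated vertices $Z(g_1),Z(g_2)$ are joined by a path of length at most $1+2+1=4$ (from $Z(g_i)$ to a neighbor in $\mathcal N$, then across $\mathcal N$); thus the set of nonisolated vertices of $\Gamma_{\mathcal Z}(G)$ is connected with diameter at most $4$, which is what we want by Corollary \ref{replacement} and Corollary \ref{CA group graph}.

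The argument is short, and the only genuinely new input is the order count in Step 2, which is exactly where the hypothesis $|G:Z(G)|\le p^5$ is used; it is this inequality that forces nonabelian centralizers to intersect above $Z(G)$. The part requiring care rather than ideas is Step 1: correctly chaining Lemmas \ref{isolated}, \ref{sub1}, \ref{sub3} and Corollary \ref{sub2} so that every nonisolated vertex whose centralizer is abelian really does attach to a vertex of $\mathcal N$, and checking that the intermediate vertex produced in Step 2 is itself nonisolated so that the whole path stays inside the set of nonisolated vertices.
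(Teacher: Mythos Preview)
Your proof is correct and follows essentially the same route as the paper: first reduce to vertices with nonabelian centralizer (your $\mathcal N$, the paper's $s$ and $t$), observing via Lemma~\ref{isolated} that a nonisolated vertex with abelian centralizer is subordinate to one in $\mathcal N$; then use the order bound $|C_G(h):Z(G)|\ge p^3$ from Lemma~\ref{abel cent index} to force any two such centralizers to meet above $Z(G)$, whence Lemma~\ref{intersection cent} gives distance at most $2$ between them. The paper phrases the count as $|G:C_G(s)|\le p^2$ rather than via the product-set formula, but this is the same inequality, and your extra remark that the intermediate vertex in Step~2 is itself nonisolated (being adjacent to a vertex of $\mathcal N$) is a point the paper leaves implicit.
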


\begin{proof}
If $G$ is a CA-group, then the conclusion holds.  Thus, we assume that $G$ is not a CA-group. Consider $x \in G \setminus Z(G)$ so that $C_G(x)$ is not abelian.  By Lemma \ref{abel cent index}, this implies that $|C_G (x):Z(G)| \ge p^3$, so $|G:C_G (x)|$ is $p$ if $|Z:Z(G)| = p^4$ and either $p$ or $p^2$ when $|G:Z(G)| = p^5$.  Let $u$ and $v$ be elements of $G \setminus Z(G)$ so that $Z(u)$ and $Z(v)$ are not isolated vertices of $G$.  

If $C_G (u)$ is not abelian, we take $u = s$.  If $C_G (u)$ is abelian, then since we are assuming that $Z(u)$ is not an isolated vertex, we may apply Lemma \ref{isolated} to see that $Z(g)$ is a subordinate vertex in $\Gamma_{\mathcal Z} (G)$.  Using Lemma \ref{sub1}, there exists $s \in G$ so that $C_G (u) < C_G (s)$.  Observe that $C_G (s)$ is not abelian and $Z(u)$ and $Z(s)$ are adjacent in $\Gamma_{\mathcal Z} (G)$. Similarly, we can find an element $t \in G \setminus Z(G)$ so that $C_G (t)$ is nonabelian and either $t = v$ or $Z(t)$ and $Z(v)$ are adjacent vertices in $\Gamma_{\mathcal Z} (G)$.  If $|G:Z(G)| = p^4$, observe that $|G:C_G (s)| = |G:C_G (t)| = p$ and if $|G:Z(G)| = p^5$, then $|G:C_G (s)|$ and $|G:C_G (t)|$ are either $p$ or $p^2$.  In all cases, we have $C_G (s) \cap C_G (t) > Z(G)$.  This implies the distance between $Z(s)$ and $Z(t)$ is at most $2$ by Lemma \ref{intersection cent}, and hence, the distance between $Z(u)$ and $Z(v)$ is at most $4$.  Since $u$ and $v$ were arbitrary, this proves the result.
\end{proof}

When $|G:Z(G)| = p^4$, we also are able to obtain information about the isolated vertices.

\begin{theorem}\label{Z(G) eq p^4}
Let $p$ be a prime and let $G$ be a $p$-group so that $|G:Z(G)| = p^4$ and $G$ is not a CA-group.  Then the following are true:
\begin{enumerate}
\item If $x \in G \setminus Z (G)$ is an element so that $Z (x)$ corresponds to an isolated vertex in $\Gamma_{\mathcal{Z}} (G)$, then $|C_G (x):Z(G)| = p$.
\item If the connected component of $\Gamma_{\mathcal{Z}} (G)$ consisting of nonisolated vertices has a unique independent vertex and does not consist of all of $G \setminus Z (G)$, then $\Gamma_{\mathcal{Z}} (G)$ will have $p^3$ isolated vertices.  
\item If the connected component of $\Gamma_{\mathcal{Z}} (G)$ consisting of nonisolated vertices has at least two independent vertices and does not consist of all of $G \setminus Z(G)$, then $\Gamma_{\mathcal{Z}} (G)$ will have at most $p^3 - p^2$ isolated vertices.
\end{enumerate}  
\end{theorem}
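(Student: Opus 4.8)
The plan is to prove the three statements in order, since (2) and (3) use (1) to control the sizes of the centralizers attached to isolated vertices.

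\emph{Part (1).} Since $Z(x)$ is isolated, Lemma \ref{isolated} gives that $C_G(x)$ is abelian, that $C_G(x)$ is maximal among the subgroups in $\mathcal{C}(G)$, and that for every $h \in G \setminus Z(G)$ either $C_G(h) = C_G(x)$ or $C_G(h) \cap C_G(x) = Z(G)$. Because $G$ is not a CA-group, I would fix $w \in G \setminus Z(G)$ with $C_G(w)$ nonabelian and first observe that $w \notin C_G(x)$: otherwise the abelian group $C_G(x)$ centralizes $w$, so $C_G(x) \le C_G(w)$, and maximality of $C_G(x)$ in $\mathcal{C}(G)$ forces $C_G(x) = C_G(w)$, impossible since one is abelian and the other is not. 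Lemma \ref{abel cent index} applied to $w$ gives $|C_G(w):Z(G)| \ge p^3$, and $C_G(w) < G$ forces $|C_G(w):Z(G)| = p^3$. As $w \notin C_G(x)$ we have $C_G(w) \ne C_G(x)$, so $C_G(w) \cap C_G(x) = Z(G)$ and therefore $|C_G(x)C_G(w)| = |C_G(x)|\,|C_G(w)|/|Z(G)| = p^3 |C_G(x)|$. Since $C_G(x)C_G(w) \subseteq G$, this gives $p^3|C_G(x)| \le |G| = p^4|Z(G)|$, i.e. $|C_G(x):Z(G)| \le p$; as $x \notin Z(G)$, equality holds.

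\emph{Part (2).} Let $\Xi$ be the connected component of $\Gamma_{\mathcal{Z}}(G)$ made up of the nonisolated vertices, which is a single component by Theorem \ref{Z(G) le p^4 or p^5}, and let $Z(g)$ be its unique independent vertex. Every other vertex of $\Xi$ is subordinate, and since a chain of subordinations strictly shrinks the corresponding centralizer (Lemma \ref{sub1}) such chains terminate at an independent vertex, which can only be $Z(g)$; so $\Xi$ consists of $Z(g)$ together with all of its subordinate vertices. Lemma \ref{independent vertex component} then shows $C_G(g) \setminus Z(G)$ is a connected component of $\mathfrak{C}(G)$, and Corollary \ref{ind diam} gives that it has diameter $2$, so $C_G(g)$ is nonabelian; by Lemma \ref{abel cent index} and $C_G(g) < G$ we get $|C_G(g):Z(G)| = p^3$. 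Now $G \setminus Z(G)$ is the disjoint union of $C_G(g) \setminus Z(G)$ with the remaining connected components of $\mathfrak{C}(G)$, and by Corollary \ref{replacement} those remaining components are exactly the complete ones and correspond to the isolated vertices of $\Gamma_{\mathcal{Z}}(G)$. Each complete component is a set $C_G(h) \setminus Z(G)$ with $|C_G(h):Z(G)| = p$ by part (1), hence has exactly $(p-1)|Z(G)|$ elements; since $|G \setminus C_G(g)| = (p^4 - p^3)|Z(G)| = p^3(p-1)|Z(G)|$, there are exactly $p^3$ such components, hence $p^3$ isolated vertices.

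\emph{Part (3).} Keep $\Xi$ as above and choose two distinct independent vertices $Z(g_1) \ne Z(g_2)$ of $\Xi$. As in part (2), neither $C_G(g_1)$ nor $C_G(g_2)$ is abelian (otherwise $Z(g_i)$ would be isolated by Lemma \ref{isolated}), so each $|C_G(g_i):Z(G)| = p^3$ and each $C_G(g_i)$ is a maximal subgroup of $G$; being of index $p$ in a $p$-group these are normal, so distinctness gives $C_G(g_1)C_G(g_2) = G$, whence $|C_G(g_1) \cap C_G(g_2)| = p^2|Z(G)|$ and $|C_G(g_1) \cup C_G(g_2)| = (2p^3 - p^2)|Z(G)|$. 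Both $C_G(g_i) \setminus Z(G)$ lie in the unique noncomplete component $\mathcal{V}$ of $\mathfrak{C}(G)$, so $|\mathcal{V}| \ge (2p^3 - p^2 - 1)|Z(G)|$. The isolated vertices again correspond to the complete components of $\mathfrak{C}(G)$, which partition $(G \setminus Z(G)) \setminus \mathcal{V}$ into sets of size $(p-1)|Z(G)|$ by part (1); hence the number of isolated vertices is at most
$$\frac{(p^4-1)|Z(G)| - (2p^3 - p^2 - 1)|Z(G)|}{(p-1)|Z(G)|} = \frac{p^4 - 2p^3 + p^2}{p-1} = p^3 - p^2.$$

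The arithmetic is routine; I expect the real work to be the bookkeeping linking $\Gamma_{\mathcal{Z}}(G)$ to $\mathfrak{C}(G)$ — identifying the component of $\mathfrak{C}(G)$ corresponding to $\Xi$ as $C_G(g) \setminus Z(G)$ in part (2) (and as a superset of $(C_G(g_1) \cup C_G(g_2)) \setminus Z(G)$ in part (3)), and confirming that all other components are complete, have centralizer of index $p$ over $Z(G)$, and tile the complement. These facts rest on Lemma \ref{independent vertex component}, Lemma \ref{isolated}, Corollary \ref{replacement}, and part (1); once they are in hand the counts drop out.
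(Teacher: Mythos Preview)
Your proof is correct and follows essentially the same strategy as the paper's: in each part you locate a nonabelian centralizer of index $p$ over $Z(G)$, use it to force $|C_G(x):Z(G)|=p$ for isolated vertices, and then count. A few minor differences are worth noting. In part~(1) you invoke Lemma~\ref{isolated} directly to get $C_G(w)\cap C_G(x)=Z(G)$ and bound via the product set, whereas the paper argues the contrapositive through Lemma~\ref{intersection cent}; these are equivalent. In part~(2) you supply the chain-of-subordinations argument showing $\Xi$ is exactly $Z(g)$ together with its subordinate vertices, which the paper asserts without justification. In part~(3) you use normality of the two maximal subgroups $C_G(g_i)$ to get the exact value $|C_G(g_1)\cap C_G(g_2):Z(G)|=p^2$, while the paper only records the inequality $\le p^2$; both give the same final bound $p^3-p^2$. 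Your detour through Corollary~\ref{ind diam} to see that $C_G(g)$ is nonabelian in part~(2) is unnecessary (it follows immediately from Lemma~\ref{isolated}: $Z(g)$ independent but not isolated forces $C_G(g)$ nonabelian), but it is not wrong.
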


\begin{proof}
Since $G$ is not a CA-group there is an element $g \in G$ so that $C_G (g)$ is  not abelian.  By Lemma \ref{abel cent index}, we see that $|C_G (g):Z(G)| \ge p^3$.  Since $g \not\in Z(G)$, we have $|C_G (g):Z (G)| < p^4$, so $|C_G (g):Z(G)| = p^3$.  Suppose that $Z(x)$ corresponds to an isolated vertex in $\Gamma_{\mathcal{Z}} (G)$.  If $|C_G (x):Z(G)| \ge p^2$, then $|C_G (g) \cap C_G(x):Z(G)| \ge p$, and $Z(x)$ and $Z(g)$ are connected by Lemma \ref{intersection cent}, and this contradicts $Z(x)$ is isolated.  Thus, we have $|C_G (x):Z(G)| = p$.  This proves (1).

Let $\Xi$ be the connected component of $\Gamma_{\mathcal{Z}} (G)$ that consists of nonisolated vertices.  We suppose that $\Xi$ has a unique independent vertex $Z(g)$.  This implies $\Xi$ consists of $Z(g)$ and subordinate vertices for $Z(g)$.  By Lemma \ref{independent vertex component}, every vertex in $\Xi$ is a subgroup of $C_G (g)$.   Thus, the isolated vertices in $\Gamma_{\mathcal{Z}} (G)$ consist of $Z(u)$ where $u \in G \setminus C_G (x)$.  Using (1), we see that we obtain $(|G| - |C_G (g)|)/(|Z(u)| -|Z(G)|)$ isolated vertices.
$$\frac{|G| - |C_G(g)|}{|Z(u)| - |Z(G)|} = \frac{p^4|Z(G)| - p^3 |Z(G)|}{p|Z(G)| - |Z(G)|} = p^3.$$

Again, take $\Xi$ to be the connected component of $\Gamma_{\mathcal{Z}} (G)$ that consists of nonisolated vertices.  We now suppose that $\Xi$ has two independent vertices $Z(g)$ and $Z(h)$.  We see that $|C_G (h):Z(G)| = |C_G (h):Z(G)| = p^3$ and $|C_G(g) \cap C_G (h): Z(G)| \le p^2$.  Thus, $|(C_G (g) \cup C_G (h)) \setminus Z(G)| \ge (2p^3 - p^2) |Z(G)|$.  The number of isolated vertices is at most:
$$\frac{p^4|Z(G)| - (2p^3 - p^2) |Z(G)|}{p|Z(G)| - |Z(G)|} = \frac {p^4 -2p^3 +p^2}{p-1} = p^2 (p-1).$$
\end{proof}

We will provide an example later to show that when $G$ is a $p$-group and $|G:Z(G)| \ge p^6$, then $\Gamma_{\mathcal{Z}} (G)$ can have more than one connected component which is not an isolated vertex.  However, we wonder if in general, whether we can obtain a bound on the diameter of the connected components of $\Gamma_{\mathcal{Z}} (G)$ in terms of $|G:Z(G)|$.

When $G$ has nilpotence class at least $3$, we can extend this idea.  Let $G_1 = G$, and $G_{i+1} = [G_i,G]$ for $i \ge 1$, so that $G_2 = G'$, and $G_1, G_2, \dots$ are the terms of the lower central series of $G$.  We see that $G$ has nilpotence class $n$ if $G_n > 1$ and $G_{n+1} = 1$.  We note that this Theorem \ref{intro thm 4} for $\mathfrak{C} (G)$. 

\begin{theorem}\label{Fittingclass more than 3}
Let $p$ be a prime, and let $G$ be a $p$-group of nilpotence class $n \ge 3$.  If $|G:C_G (G_{n-1})|$ is either $p$ or $p^2$, then $\Gamma_{\mathcal{Z}} (G)$ has at most one connected component that is a nonisolated vertex and it has diameter at most $8$.
\end{theorem}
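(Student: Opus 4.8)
The plan is to follow the strategy of the proof of Theorem~\ref{Z(G) le p^4 or p^5}, but since $|G:Z(G)|$ is no longer bounded, two non-abelian centralizers need not meet above $Z(G)$, so instead of pairing them directly I would anchor everything to one fixed vertex. Write $C = C_G(G_{n-1})$, so that by hypothesis $|G:C| \in \{p, p^2\}$. Since $G$ has nilpotence class exactly $n$ we have $[G_{n-1},G] = G_n \neq 1$, hence $G_{n-1} \not\le Z(G)$; fix an element $g \in G_{n-1} \setminus Z(G)$, so that $Z(g)$ is a vertex of $\Gamma_{\mathcal Z}(G)$, and note that every element of $C$ centralizes $g$, so $C \le C_G(g)$. (The subgroup $C$ itself need not equal any $C_G(x)$; passing to the genuine centralizer $C_G(g) \supseteq C$ is what makes it usable as a graph vertex.) The goal is to show that every non-isolated vertex of $\Gamma_{\mathcal Z}(G)$ lies within distance $3$ of the single anchor $Z(g)$.

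The first and main step is: if $x \in G \setminus Z(G)$ and $C_G(x)$ is non-abelian, then $Z(x)$ and $Z(g)$ are connected with distance at most $2$ in $\Gamma_{\mathcal Z}(G)$. Indeed, Lemma~\ref{abel cent index} forces $|C_G(x):Z(G)| \ge p^3$; then from $|C_G(x)\,C| = |C_G(x)|\,|C|\,/\,|C_G(x)\cap C|$ and $|C_G(x)\,C| \le |G|$ we get $|C_G(x) \cap C| \ge |C_G(x)|/|G:C| \ge p^3|Z(G)|/p^2 > |Z(G)|$, so $C_G(x)\cap C_G(g) \ge C_G(x)\cap C > Z(G)$, and Lemma~\ref{intersection cent} gives the claim.

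Next I would dispose of an arbitrary non-isolated vertex $Z(x)$. By Lemma~\ref{isolated} combined with Corollary~\ref{sub2}, $Z(x)$ is isolated exactly when $C_G(x)$ is abelian and $Z(x)$ is independent; so a non-isolated $Z(x)$ either has $C_G(x)$ non-abelian --- and then the previous step gives $d(Z(x),Z(g)) \le 2$ --- or is subordinate to some vertex $Z(y)$, in which case Lemma~\ref{sub1} yields $C_G(x) < C_G(y)$, Lemma~\ref{sub3} forces $C_G(y)$ to be non-abelian, and $Z(x)$ is adjacent to $Z(y)$; applying the previous step to $y$ then gives $d(Z(x),Z(g)) \le 1 + 2 = 3$. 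Thus $d(Z(x),Z(g)) \le 3$ in all cases.

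To finish: if $\Gamma_{\mathcal Z}(G)$ has no non-isolated vertex the statement is vacuous. Otherwise a non-isolated vertex is joined to $Z(g)$ by a path, so $Z(g)$ is itself non-isolated and every non-isolated vertex lies in the connected component $\Xi$ of $Z(g)$; hence $\Xi$ is the only connected component of $\Gamma_{\mathcal Z}(G)$ that is not a lone isolated vertex, and for any two of its vertices $d(Z(x_1),Z(x_2)) \le d(Z(x_1),Z(g)) + d(Z(g),Z(x_2)) \le 3 + 3 = 6 \le 8$, so the diameter of $\Xi$ is at most $8$. Translating back through Corollary~\ref{replacement} gives Theorem~\ref{intro thm 4}. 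The one genuinely new idea --- and the only place the nilpotence-class hypothesis enters --- is using $C_G(G_{n-1})$ as a fixed ``large'' subgroup of index at most $p^2$ that every non-abelian centralizer must meet above $Z(G)$; I expect the bookkeeping with subordinate vertices (and checking that $Z(g)$ itself lands in the right component) to be the only fiddly part, exactly as in the proof of Theorem~\ref{Z(G) le p^4 or p^5}.
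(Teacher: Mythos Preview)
Your proof is correct and follows essentially the same strategy as the paper's: anchor at $Z(g)$ for some $g \in G_{n-1}\setminus Z(G)$, show that every non-abelian centralizer meets $C = C_G(G_{n-1})$ above $Z(G)$ and hence lies within bounded distance of the anchor, and then pick up the remaining non-isolated vertices via the subordinate-vertex argument exactly as in Theorem~\ref{Z(G) le p^4 or p^5}. Your bookkeeping is in fact tighter: the paper routes through the observation $G' \le C$ and argues that $C_G(x)\cap C = Z(G)$ forces $C_G(x)$ abelian, obtaining distance at most $3$ from a non-abelian centralizer to the anchor and hence diameter at most $8$; your direct use of Lemma~\ref{abel cent index} together with $C \le C_G(g)$ yields distance at most $2$ and hence diameter at most $6$.
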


\begin{proof}
Since $n$ is the nilpotence class of $G$, we have $n \ge 3$. 
Thus, $G_{n-1} \le G'$ and $G_{n-1} \not\le Z(G)$.  Take $C = C_G (G_{n-1})$ and note that $C < G$.  We know that $[G_{n-1},G_2] \le G_{(n-1) + 2} = G_{n+1} = 1$; so we also have $G_2 = G' \le C$.  We can find an element $y \in G_{n-1} \setminus Z(G)$.  Since $G_{n-1}$ is central in $C$, we have for every $x \in C \setminus Z(G)$, that $x$ centralizes $y$.  Thus, $Z (x)$ is adjacent to $Z (y)$ in $\Gamma_{\mathcal Z} (G)$.  It follows that the centers of the elements of $C \setminus Z(G)$ all lie in the same connected component $\Xi$ of $\Gamma_{\mathcal Z} (G)$.  Notice that the centers of the elements of $C \setminus Z(G)$ have distance at most $2$ from each other in this graph.  

Suppose $g \in G \setminus C$.  If $C_G (g) \cap C > Z(G)$, then $Z(g)$ is connected by a path of distance at most $2$ to some vertex in $\Xi$, and thus, it has a distance at most $3$ to $Z (y)$.  We may assume that $C_G (g) \cap C = Z(G)$.  Since $G' \le C$, we see that $C_G(g)' \le C$.  Obviously, $C_G(g)' \le C_G (g)$, so $C_G (g)' \le C_C (g) \cap C = Z(G)$.  Now, $|C_G (g):Z (G)| \le p^2$.  Since $g \in Z (g) \setminus Z(G)$, we have $|C_G (g): Z(g)| \le p$, and so, $C_G (g)/Z(g)$ is central-by-cyclic.  Thus, $C_G (g)$ is abelian.  By Lemma \ref{isolated}, we have either $Z(g)$ is an isolated vertex or $Z(g)$ is a subordinate vertex.  If $Z(g)$ is a subordinate vertex, then there exists $h \in G \setminus Z(G)$ so that $Z(g)$ is subordinate to $Z(h)$.  By Lemma \ref{sub3}, we see that $C_G (h)$ is not abelian, and so, $C_G (h) \cap C > 1$.  Thus, $Z (g)$ is adjacent to $Z(h)$ and $Z(h)$ has distance at most $3$ to $Z(y)$.  We have now shown that every element in $\Xi$ has distance at most $4$ from $Z (y)$, so $\Xi$ has diameter at most $8$ and every vertex outside of $\Xi$ is an isolated vertex, proving the result.
\end{proof}

Working as in Theorem \ref{Z(G) eq p^4}, we can show that if $G$ has nilpotence class $n \ge 3$ and $|G:C_G(G_{n-1})| = p^4$, then the isolated vertices all have index $p$ over $Z(G)$ and the number of isolated vertices will be at most $|C_G (G_{n-1}):Z(G)|$.  We wonder when $G$ has nilpotence class $n \ge 3$ whether we can bound the diameter of the connected components of $\Gamma_{\mathcal{Z}} (G)$ in terms of $|G:C_G (G_{n-1})|$.







\section{Examples}

In this section, we define a family of examples.  We will use this family of examples to obtain $p$-groups whose centralizer graphs have various structures of interest.   One such group is to construct a $p$-group $G$ where $\Gamma_{\mathcal Z} (G)$ has more than one nontrivial connected component.  In fact we do more.  We show that for any integer $k$ and integers $n_1,\dots,n_k$, we can construct a $p$-group $G$ of nilpotence class $2$ so that $\Gamma_{\mathcal Z} (G)$ has exactly $k$ nontrivial connected components, say $C_1$, \dots, $C_k$, and each component $C_i$ has diameter $n_i$.

Components of $\Gamma_{\mathcal Z} (G)$ are either isolated vertices or have diameter at least $2$.  We say that a connected component of $\Gamma_{\mathcal Z} (G)$ is \textit{trivial} if it consists of an isolated vertex.

\subsection{$G(p,n,S)$}

We fix a prime $p$ and a positive integer $n$.  We will take $S \subseteq \{ \{ i,j \} \,\, | \,\, 1 \leq i < j \leq n \}$.  Note that we may view $S$ as a graph on the set $\{ 1, \dots, n \}$.   We will write $\widebar{S}$ for the complement graph of $S$.  We define a group $G = G(p,n,S)$ as follows.  We define $G$ to be the $p$-group of nilpotence class $2$ on the formal variables $x_1, \dots, x_n$ and $z_{i,j}$ for all $\{ i,j \} \in S$, $i < j$ where the $z_{i,j}$'s lie in the center of $G$ and $x_i$ and $x_j$ commute if $\{i,j \} \in \widebar{S}$ and $[x_i,x_j]=z_{i,j}$ if $\{i,j \} \in S$.  When convenient, we may write $z_{j,i}$ for $(z_{i,j})^{-1}$.  The elements of $G$ consist of all of the formal products (or words) over the formal variables.  So $G$ is a $p$-group with $|G| = p^{n + |S|}$.  For our first observation, we see that $x_i$ is in the center of $G$ if and only if $i$ an isolated vertex of $S$ viewed as a graph. 

\begin{proposition}\label{prop:degen}
Let $G = G(p,n,S)$, and fix $i$, $1 \leq i \leq n$.  Then $x_i \in Z(G)$ if and only if $i$ is an isolated vertex in $S$ viewed as a graph.
\end{proposition}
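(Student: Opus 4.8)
The argument splits along the biconditional. For the direction \emph{isolated implies central}, suppose $i$ is an isolated vertex of $S$. Then $\{i,j\} \in \widebar{S}$ for every $j \neq i$, so by construction $x_i$ commutes with every $x_j$; and $x_i$ commutes with every $z_{k,l}$ because the $z_{k,l}$ are central in $G$. Since $G$ is generated by $x_1, \dots, x_n$ together with the $z_{k,l}$, it follows that $x_i$ centralizes $G$, i.e.\ $x_i \in Z(G)$.

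For the converse I argue the contrapositive. Suppose $i$ is not an isolated vertex of $S$ and choose $j \neq i$ with $\{i,j\} \in S$. By construction $[x_i, x_j]$ equals $z_{i,j}$ (if $i < j$) or $z_{i,j}^{-1}$ (if $j < i$, using the convention $z_{j,i} = z_{i,j}^{-1}$); in either case it suffices to show $z_{i,j} \neq 1$ in $G$, for then $x_i$ and $x_j$ fail to commute and hence $x_i \notin Z(G)$.

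The one substantive point is therefore that the relator $z_{i,j}$ is genuinely nontrivial in $G$, i.e.\ that the presentation of $G(p,n,S)$ does not collapse its central generators. One way to see this is to invoke the order count $|G| = p^{n+|S|}$ recorded when $G$ was defined: setting $N = \langle z_{k,l} : \{k,l\} \in S \rangle \le Z(G)$, the quotient $G/N$ is generated by the pairwise commuting images of $x_1, \dots, x_n$, each of order dividing $p$, so $|G/N| \le p^n$ and $|N| \le p^{|S|}$, and equality in the order count forces $N$ to be elementary abelian with the $z_{k,l}$ as a basis; in particular $z_{i,j} \neq 1$. Alternatively, and perhaps more transparently, one can build an explicit model: on the direct sum $V \oplus W$ of the $\mathbb{F}_p$-spaces $V$ with basis $x_1, \dots, x_n$ and $W$ with basis $\{ z_{k,l} : \{k,l\} \in S \}$, define a multiplication by $(v,w)(v',w') = (v+v', w+w'+\beta(v,v'))$, where $\beta$ is the bilinear map with $\beta(x_k,x_l) = z_{k,l}$ for $k<l$, $\{k,l\}\in S$, and $\beta(x_k,x_l) = 0$ otherwise. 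This is a class-$2$ group of order $p^{n+|S|}$ satisfying the defining relations of $G(p,n,S)$, and in it $[(x_i,0),(x_j,0)] = (0, z_{i,j}) \neq 0$; pulling back along the resulting surjection $G \to V \oplus W$ shows $z_{i,j} \neq 1$. Verifying this non-collapse is really the only obstacle; once it is in hand, both implications of the proposition are immediate from the defining relations.
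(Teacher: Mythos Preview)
Your proof is correct and follows essentially the same approach as the paper: both directions use the defining commutator relations in the same way. The only difference is that you carefully justify the nontriviality of $z_{i,j}$ (via the order count $|G|=p^{n+|S|}$ or an explicit model), whereas the paper simply takes this as given from the construction and the stated order of $G$.
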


\begin{proof}
If $i$ is an isolated vertex in $S$, then for every $j$ with $i \ne j$, we have $\{ i,j \} \notin S$, and thus $[x_i, x_j]=1$ for all $x_j$.  Hence $x_i \in Z(G)$.  On the other hand, if $x_i \in Z(G)$, then $[x_i, x_j]=1$ for all $j \ne i$ and so there are no edges in the graph $S$ that contain $i$, and so $i$ is an isolated vertex.
\end{proof}


Thus, we will exclude graphs $S$ with isolated vertices so that 
$$Z(G (p,n,S)) = \langle z_{i,j} \mid \{i,j\} \in S \rangle.$$  Such $G(p,n,S)$ we call \textit{non-degenerate}.

We now analyze centralizers of elements of $G(p,n,S)$.  Of course for any group $G$ with $g \in G$ and $z \in Z(G)$, we have $C_G(g) = C_G(gz)$.  We first compute the centralizers of the $x_i$'s. 

\begin{lemma} \label{single}
Let $G = G(p,n,S)$, and fix $i$, $1 \leq i \leq n$.  Then $C_G (x_i) = \langle x_i, x_j \mid \{ i, j\} \in \widebar{S} \rangle Z(G)$.
\end{lemma}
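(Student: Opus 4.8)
The plan is to compute $C_G(x_i)$ directly from the defining relations of $G = G(p,n,S)$. Since $G$ has nilpotence class $2$ with $G' \le Z(G)$, a general element of $G$ can be written (modulo the center, and modulo ordering ambiguities that only affect the central part) as a product $x_1^{a_1} \cdots x_n^{a_n} z$ with $z \in Z(G)$. The key computational fact is that $[x_1^{a_1}\cdots x_n^{a_n}, x_i] = \prod_j [x_j, x_i]^{a_j} = \prod_{\{i,j\}\in S} z_{j,i}^{a_j}$, using bilinearity of the commutator in a class-$2$ group. So an element $g = x_1^{a_1}\cdots x_n^{a_n} z$ centralizes $x_i$ if and only if $\prod_{\{i,j\}\in S} z_{j,i}^{a_j} = 1$ in $Z(G)$.

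First I would set up notation carefully: record that in a class-$2$ group commutators are bilinear and central, so $[g,x_i]$ depends only on the image of $g$ in $G/G'$ (equivalently on the exponent vector $(a_1,\dots,a_n)$ modulo $p$ in each slot, or rather on the $a_j$ themselves since the $x_j$ may have order larger than $p$ — but the relation $[x_i,x_j]=z_{i,j}$ forces the relevant torsion to behave correctly; I would state the group as the obvious relatively free object so that the $z_{i,j}$ are independent of order $p$ and the $x_i$ generate freely subject only to the stated commutator relations). Second, I would show the containment $\langle x_i, x_j \mid \{i,j\}\in\widebar S\rangle Z(G) \subseteq C_G(x_i)$: each such $x_j$ commutes with $x_i$ by definition of $\widebar S$, $x_i$ commutes with itself, and $Z(G)$ is central, so the generated subgroup together with $Z(G)$ centralizes $x_i$. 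Third, for the reverse containment, take $g \in C_G(x_i)$, write $g = x_1^{a_1}\cdots x_n^{a_n} z$, and use the commutator computation: $1 = [g,x_i] = \prod_{\{i,j\}\in S} z_{j,i}^{a_j}$. Since the $z_{i,j}$ for $\{i,j\}\in S$ freely generate $Z(G)$ as (a product of cyclic groups, hence) a free object on those generators, each exponent $a_j$ with $\{i,j\}\in S$ must be divisible by $p$ — so $x_j^{a_j}$ is itself central for those $j$. Hence $g$ lies in $\langle x_j \mid \{i,j\}\in\widebar S\rangle \langle x_i\rangle Z(G)$, as the factors $x_j^{a_j}$ with $\{i,j\}\in S$ get absorbed into $Z(G)$ and the factor $x_i^{a_i}$ is a power of $x_i$.

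The main obstacle — and the point that needs the most care — is pinning down the precise structure of $G(p,n,S)$ so that the "freeness" argument for $Z(G)$ is rigorous: one must be sure that $z_{j,i}^{a_j} \cdot z_{j',i}^{a_{j'}} \cdots = 1$ really does force $p \mid a_j$ for each $j$ with $\{i,j\}\in S$, which requires knowing the $z_{i,j}$ generate $Z(G)$ as a direct product of cyclic groups of order $p$ with no hidden relations. This is exactly the content of non-degeneracy together with the definition of $G$ as the appropriate relatively free class-$2$ $p$-group of exponent-$p$ type on the center; I would cite Proposition \ref{prop:degen} and the preceding discussion establishing $Z(G) = \langle z_{i,j} \mid \{i,j\}\in S\rangle$, and note the $z_{i,j}$ are independent. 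A secondary minor point is handling the ordering of the $x_j$'s in the normal form and the fact that reordering introduces only central corrections, which do not affect membership in $C_G(x_i)$; I would dispatch this with a sentence. Modulo those bookkeeping points, the two containments close the proof.
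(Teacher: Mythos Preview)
Your proposal is correct and follows essentially the same route as the paper: both show the easy containment by checking that the generators commute with $x_i$, then for the reverse write $g = (\prod_t x_t^{a_t})z$, compute $[x_i,g] = \prod_{\{i,t\}\in S} z_{i,t}^{a_t}$, and use independence of the $z_{i,j}$'s to force $a_t \equiv 0$ for $t$ with $\{i,t\}\in S$. The only cosmetic difference is that the paper assumes the normal form $0 \le a_t \le p-1$ and concludes $a_t = 0$ directly, whereas you phrase it as $p \mid a_t$ and absorb $x_t^{a_t}$ into $Z(G)$; these are equivalent given the construction.
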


\begin{proof}
We have $x_i \in C_G (x_i)$ and if $\{ i,j \} \not\in S$, then $x_j \in C_G (x_i)$.  Hence, $\langle x_i, x_j \mid \{ i, j\} \in \widebar{S} \rangle Z(G) \le C_G (x_i)$.  On the other hand, suppose $g \in C_G (x_i)$.  We know $\displaystyle g = (\prod_{t=1}^n x_t^{a_t}) z$ for some $z \in Z(G)$.  We see that $\displaystyle [x_i,g] = \prod z_{i,t}^{a_t}$ for the $t$'s such that $\{i,t \} \in S$.  Hence, this commutator will be $1$ if and only if $a_t = 0$ for all $t$ such that $\{i,t \} \in S$.  It follows that $g \in  \langle x_i, x_j \mid \{ i, j\} \in \widebar{S} \rangle Z(G)$.  This implies that $C_G (x_i) \le  \langle x_i, x_j \mid \{ i, j\} \in \widebar{S} \rangle Z(G)$, and we obtain the desired equality.
\end{proof}

We now see that $C_G (x_i)$ and $C_G (x_j)$ are equal if and only if $i$ and $j$ are true twins in $\widebar{S}$.  Since we will want all of the $C_G (x_i)$'s to give distinct vertices in $\Gamma_{\mathcal{Z}} (G)$, we will assume that the graph of $\widebar{S}$ has no true twin vertices.

\begin{corollary} \label{single 1}
Let $G = G(p,n,S)$, and fix $i \ne j$, $1 \leq i, j \leq n$.  Then $C_G (x_i) = C_G (x_j)$ if and only if $i$ and $j$ are true twins in $\widebar{S}$. 
\end{corollary}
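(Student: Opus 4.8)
The plan is to read off everything from the explicit centralizer computation of Lemma~\ref{single}. Write $N_{\widebar{S}}(i)$ for the neighbourhood of the vertex $i$ in the graph $\widebar{S}$; then Lemma~\ref{single} says
$$C_G(x_i) = \langle x_k \mid k \in \{i\} \cup N_{\widebar{S}}(i) \rangle\, Z(G),$$
and, by the definition of closed (``true'') twins recalled in Section~\ref{sec:some}, the vertices $i$ and $j$ are true twins in $\widebar{S}$ precisely when $\{i,j\} \in \widebar{S}$ and $\{i\} \cup N_{\widebar{S}}(i) = \{j\} \cup N_{\widebar{S}}(j)$. So the whole corollary is a translation of Lemma~\ref{single} into this graph-theoretic language.

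For the forward implication I would argue as follows: if $i$ and $j$ are true twins in $\widebar{S}$, then their closed neighbourhoods in $\widebar{S}$ are literally the same set, so the two generating sets in the displayed formula coincide and hence $C_G(x_i) = C_G(x_j)$. Nothing more is needed here.

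For the converse, assume $C_G(x_i) = C_G(x_j)$. First, $x_i \in C_G(x_i) = C_G(x_j)$ gives $[x_i,x_j] = 1$, so $\{i,j\} \notin S$, i.e. $\{i,j\} \in \widebar{S}$; thus $i$ and $j$ are adjacent in $\widebar{S}$. Now fix any $k \ne i,j$. Because $G$ is non-degenerate, $[x_i,x_k] = z_{i,k} \ne 1$ when $\{i,k\} \in S$ and $[x_i,x_k] = 1$ when $\{i,k\} \in \widebar{S}$; since $x_k \in C_G(x_i)$ means exactly $[x_i,x_k]=1$, we get $x_k \in C_G(x_i)$ iff $\{i,k\} \in \widebar{S}$, and symmetrically $x_k \in C_G(x_j)$ iff $\{j,k\} \in \widebar{S}$. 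From $C_G(x_i) = C_G(x_j)$ it then follows that $\{i,k\} \in \widebar{S} \iff \{j,k\} \in \widebar{S}$ for every $k \ne i,j$; together with $\{i,j\} \in \widebar{S}$, this is exactly the statement $\{i\} \cup N_{\widebar{S}}(i) = \{j\} \cup N_{\widebar{S}}(j)$, so $i$ and $j$ are true twins in $\widebar{S}$.

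I do not expect a genuine obstacle: the proof is a short bookkeeping exercise. The only two points to be careful about are handling the corner cases $k = i$ and $k = j$ when checking equality of the closed neighbourhoods (these are covered by the adjacency $\{i,j\}\in\widebar{S}$ established at the start), and invoking non-degeneracy so that the central commutators $z_{i,k}$ are genuinely nontrivial --- that is what makes ``$x_k$ commutes with $x_i$'' equivalent to ``$\{i,k\} \in \widebar{S}$''. One could instead try to derive the converse by comparing the subgroups $C_G(x_i)/Z(G)$ and $C_G(x_j)/Z(G)$ inside $G/Z(G)$, but that would require knowing the images of the $x_k$ are suitably independent, so the commutator argument above is cleaner and is the route I would take.
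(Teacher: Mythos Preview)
Your proof is correct and is exactly the argument the paper has in mind: the corollary is stated without proof, immediately after Lemma~\ref{single}, as a direct reading-off of that explicit centralizer description. One minor quibble: the nontriviality of $z_{i,k}$ for $\{i,k\}\in S$ comes from the construction itself (the $z_{i,j}$ are formal generators), not from non-degeneracy; non-degeneracy is what guarantees $x_i\notin Z(G)$ so that the statement is meaningful.
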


Assuming they are not equal, $C_G (x_i)$ and $C_G (x_j)$ will be adjacent in $\Gamma_{\mathcal{Z}} (G)$ exactly when $i$ and $j$ are adjacent in $\widebar{S}$.

\begin{corollary} \label{single 2}
Let $G = G(p,n,S)$, and fix $i \ne j$, $1 \leq i, j \leq n$.  Then $C_G (x_i)$ and $C_G (x_j)$ are adjacent in $\Gamma_{\mathcal{Z}} (G)$ if and only if $i$ and $j$ are adjacent and not true twins in $\widebar{S}$. 
\end{corollary}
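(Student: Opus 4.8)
The plan is to reduce the statement about adjacency in $\Gamma_{\mathcal{Z}}(G)$ to the already-established facts about the two $x_i$'s commuting and about when they give equal vertices. By definition, $C_G(x_i)$ and $C_G(x_j)$ are adjacent in $\Gamma_{\mathcal{Z}}(G)$ precisely when they are distinct vertices and $Z(x_j) \le C_G(x_i)$; by Lemma \ref{three}, the containment $Z(x_j) \le C_G(x_i)$ is equivalent to $x_j \in C_G(x_i)$, i.e.\ to $x_i$ and $x_j$ commuting. So the claim amounts to: $x_i$ and $x_j$ commute and $C_G(x_i) \ne C_G(x_j)$ if and only if $i$ and $j$ are adjacent and not true twins in $\widebar{S}$.

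First I would handle the ``commuting'' half. By construction of $G(p,n,S)$, we have $[x_i,x_j] = 1$ exactly when $\{i,j\} \in \widebar{S}$, that is, when $i$ and $j$ are adjacent in $\widebar{S}$. This is immediate from the defining relations. Next I would invoke Corollary \ref{single 1}, which says $C_G(x_i) = C_G(x_j)$ if and only if $i$ and $j$ are true twins in $\widebar{S}$; equivalently, $C_G(x_i) \ne C_G(x_j)$ if and only if $i$ and $j$ are not true twins in $\widebar{S}$. Conjoining the two conditions gives exactly the right-hand side of the corollary's statement.

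Assembling: $C_G(x_i)$ and $C_G(x_j)$ are adjacent in $\Gamma_{\mathcal{Z}}(G)$ $\iff$ $C_G(x_i) \ne C_G(x_j)$ and $Z(x_j) \le C_G(x_i)$ $\iff$ (by Lemma \ref{three}(1)) $C_G(x_i) \ne C_G(x_j)$ and $x_j \in C_G(x_i)$ $\iff$ (by the defining relations and Corollary \ref{single 1}) $i$ and $j$ are adjacent in $\widebar{S}$ and $i$ and $j$ are not true twins in $\widebar{S}$. Since this is a chain of equivalences, the proof is complete in a few lines.

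There is no real obstacle here: the corollary is a bookkeeping consequence of the definition of $\Gamma_{\mathcal{Z}}(G)$ together with Lemma \ref{three} and Corollary \ref{single 1}. The only point requiring a moment's care is the bridge between the graph-theoretic notion of adjacency in $\Gamma_{\mathcal{Z}}(G)$ (phrased via $Z(x_j) \le C_G(x_i)$) and the group-theoretic statement $x_i x_j = x_j x_i$; this is handled cleanly by Lemma \ref{three}(1), which makes $Z(a) \le C_G(b)$ equivalent to $a \in C_G(b)$. One should also note that $x_i, x_j \in G \setminus Z(G)$ here, which holds by the non-degeneracy assumption together with Proposition \ref{prop:degen} (since $i$ and $j$ are non-isolated in $S$, as $\widebar{S}$ has no true twins and so on — in any case this is already implicit in speaking of the vertices $C_G(x_i)$).
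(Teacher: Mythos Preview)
Your argument is correct and is exactly the intended one: the paper states this corollary without proof, and the implicit derivation is precisely to combine the definition of adjacency in $\Gamma_{\mathcal Z}(G)$, Lemma~\ref{three}(1), the defining relations of $G(p,n,S)$, and Corollary~\ref{single 1}. The only minor wobble is your parenthetical about why $x_i,x_j\notin Z(G)$: this follows directly from the standing non-degeneracy assumption via Proposition~\ref{prop:degen}, not from any condition on true twins in $\widebar{S}$.
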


We now consider centralizers of products.  The elements of $G = G(p,n,S)$ have the form $\displaystyle g = \prod_{i=1}^n x_i^{a_i} z$ where $0 \le a_i \le p-1$ and $z \in Z(G)$.  We will say that $x_i$ is involved in $g$ if $a_i \ne 0$.  Note that this next lemma is a generalization of Lemma \ref{single}.  

\begin{lemma}\label{clique}
Let $G = G(p,n,S)$.  Suppose $C \subseteq \{ 1, \dots, n \}$ forms a clique (of size at least $2$) in $\widebar{S}$.  Let $\displaystyle g = \prod_{i \in C} x_i^{a_i}$ with $a_i \ge 1$ for each $i \in C$.  Let $D$ be the set of all $j$ in $\{ 1, \dots, n\} \setminus C$ so that $j$ is adjacent in $\widebar{S}$ to every vertex in $C$.  Then $C_G (g) = \langle x_j \mid j \in C \cup D \rangle Z(G)$.  In fact, $C_G (g) \le C_G (x_i)$ for every $i \in C$.  
\end{lemma}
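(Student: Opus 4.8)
The plan is to generalize the computation from the proof of Lemma~\ref{single}, exploiting that $G = G(p,n,S)$ has nilpotence class $2$, so that $G' \le Z(G)$ and the commutator map is bilinear. First I would fix an arbitrary $h \in G$ and write it in normal form $h = \bigl(\prod_{t=1}^n x_t^{b_t}\bigr) z$ with $z \in Z(G)$ and $0 \le b_t \le p-1$; since $z$ is central it contributes nothing to $[g,h]$, so the whole question reduces to deciding when $[g,h] = 1$.

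Next I would expand $[g,h]$ using bilinearity of the commutator in a class-$2$ group: $[g,h] = \prod_{i \in C}\prod_{t=1}^n [x_i,x_t]^{a_i b_t}$. Here $[x_i,x_t]$ is trivial when $\{i,t\} \in \widebar{S}$ or $i = t$, and equals $z_{i,t}$ when $\{i,t\} \in S$; since $C$ is a clique in $\widebar{S}$, no pair of indices inside $C$ contributes, so this collapses to $[g,h] = \prod z_{i,t}^{a_i b_t}$, the product running over all pairs with $i \in C$, $t \notin C$, and $\{i,t\} \in S$. By the construction of $G(p,n,S)$ the center is elementary abelian with the $z_{i,t}$, $\{i,t\}\in S$, forming a basis, so this product is trivial exactly when $a_i b_t \equiv 0 \pmod p$ for every such pair $(i,t)$. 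As each $a_i$ is nonzero mod $p$ and hence invertible, this is equivalent to requiring $b_t = 0$ for every $t \notin C$ that is $S$-adjacent to some vertex of $C$; that is, $b_t$ may be nonzero only when $t \in C$ or when $t \notin C$ is adjacent in $\widebar{S}$ to every vertex of $C$, i.e.\ $t \in D$. This yields $C_G (g) = \langle x_j \mid j \in C \cup D \rangle Z(G)$.

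For the final assertion I would simply compare generators using Lemma~\ref{single}, which gives $C_G (x_i) = \langle x_i, x_j \mid \{i,j\}\in\widebar{S}\rangle Z(G)$. Every index $j \in C \cup D$ either equals $i$, or lies in $C \setminus \{i\}$ and is therefore adjacent to $i$ in $\widebar{S}$ because $C$ is a clique, or lies in $D$ and is adjacent in $\widebar{S}$ to every vertex of $C$, in particular to $i$. Hence every generator $x_j$ of $C_G (g)$ lies in $C_G (x_i)$, so $C_G (g) \le C_G (x_i)$ for each $i \in C$.

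I expect the only delicate point to be the step translating the vanishing of the commutator word $\prod z_{i,t}^{a_i b_t}$ into the arithmetic conditions $a_i b_t \equiv 0 \pmod p$: this uses both that the $z_{i,j}$ with $\{i,j\}\in S$ are independent of order $p$ (so no hidden relation collapses the product) and that the $a_i$ are units mod $p$. Everything else is bookkeeping with the index sets $C$ and $D$.
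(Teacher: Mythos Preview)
Your proposal is correct and follows essentially the same approach as the paper, which simply notes that the argument is ``very similar to the proof of Lemma~\ref{single}'' and omits the details; you have written out those details explicitly using the bilinearity of the commutator in a class-$2$ group and the independence of the $z_{i,j}$'s. Your treatment of the final containment $C_G(g)\le C_G(x_i)$ by comparing generating sets also matches the paper's brief comment.
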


\begin{proof}
Since most of the proof of this is very similar to the proof of Lemma \ref{single}, we do not repeat it here.  We only comment on the last statement.  Notice that for $i \in C$, the $x_j$'s that commute with $x_i$ will include all the $x_h$ with $h \in C \cup D$, but may include other elements in $\{ 1, \dots, n \}$ which is why we have to allow containment in $C_G (g) \le C_G (x_i)$.
\end{proof}

We now consider products that contain elements of $\{ x_1, \dots, x_n \}$ that do not form a clique in $\widebar{S}$.  We start with products that contain elements of $\{ x_1, \dots, x_n \}$ whose distance in $\widebar{S}$ is at most $2$.


Given a set of vertices $D$ of a graph $\Gamma$, we say that $x \in D$ is a \textit{dominating vertex of $D$} if $x$ is adjacent in $\Gamma$ to every $y \in D$ with $x \neq y$.

Note that if $D$ is a set of vertices of a graph $\Gamma$, and if $E$ is the set of all dominating vertices of $D$, then $D = D' \cup E$ where $D' = D \setminus E$ and if $E$ is nonempty, then $E$ is a clique in $\Gamma$.

\begin{lemma} \label{type 2}
Let $G = G(p,n,S)$.  Suppose $C, D \subseteq \{ 1, \dots, n \}$ so that
\begin{enumerate}
\item $C$ contains at least two elements, 
\item all the elements in $C$ pairwise have distance $2$ in $\widebar{S}$.
\item $D$ is the set of all elements that are adjacent in $\widebar{S}$ to all the elements in $C$.
\item $D$ is nonempty.
\item Write $D = D' \cup E$ where $E$ is the set of all dominating vertices of $D$ in $\widebar{S}$ and $D' = D \setminus E$.
\end{enumerate} 
If $\displaystyle g = \prod_{i \in C} x_i^{a_i} \prod_{j \in E} x_j^{b_j}$ with $a_i \ge 1$ and $b_j \ge 0$, then 
$$C_G (g) = \langle \prod_{i \in C} x_i^{a_i}, x_j \mid j \in D \rangle Z(G).$$  
If $E$ is not empty, then $C_G (g)$ is a subordinate vertex in $\Gamma_{\mathcal{Z}} (G)$.   
\end{lemma}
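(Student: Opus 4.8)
The plan is to compute $C_G(g)$ explicitly by exploiting that $G$ has nilpotence class $2$, so commutators are bilinear and land in the center: if $g,h$ are written in normal form over the $x_t$'s modulo $Z(G)$, then $[g,h]$ is a product of the generators $z_{a,b}$ whose exponents mod $p$ are read off from the exponents of $g$ and $h$, and this product is trivial precisely when all those exponents vanish mod $p$, since the $z_{a,b}$ with $\{a,b\}\in S$ are independent of order $p$. I will record three combinatorial facts used throughout: $C\cap D=\emptyset$ (an element of $C$ is not adjacent in $\widebar{S}$ to the other elements of $C$, since $C$ has pairwise distance $2$ in $\widebar{S}$), so also $C\cap E=\emptyset$; every element of $D$, in particular of $E$, is adjacent in $\widebar{S}$ to every element of $C$; and $E$ is a clique in $\widebar{S}$, as noted just before the lemma.

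For the inclusion $\langle w, x_j\mid j\in D\rangle Z(G)\le C_G(g)$, with $w=\prod_{i\in C}x_i^{a_i}$ and $g=w\prod_{k\in E}x_k^{b_k}$: one has $[w,g]=1$ because each $x_k$ ($k\in E\subseteq D$) commutes with every $x_i$ ($i\in C$); $[x_j,g]=1$ for $j\in D$ because $x_j$ commutes with every $x_i$ ($i\in C$) and with every $x_k$ ($k\in E$, using that $k$ dominates $D$); and $Z(G)\le C_G(g)$ trivially. For the reverse inclusion, take $h=\prod_{t=1}^n x_t^{c_t}z$ with $z\in Z(G)$ and expand $[g,h]=\prod_{\{i,t\}\in S,\,i\in C}z_{i,t}^{a_ic_t}\cdot\prod_{\{k,t\}\in S,\,k\in E}z_{k,t}^{b_kc_t}$. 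Setting this equal to $1$ and using independence of the $z_{a,b}$: if $t\notin C\cup D$, then since $t\notin D$ there is some $i\in C$ with $\{i,t\}\in S$, and the only term in $[g,h]$ involving $z_{i,t}$ is $z_{i,t}^{a_ic_t}$ (here $C\cap D=\emptyset$, $C\cap E=\emptyset$, and $E\subseteq D$ with $t\notin D$ rule out the other possible contributions), so $a_ic_t\equiv 0$, forcing $c_t\equiv 0\pmod p$; and for $i,i'\in C$ the coefficient of $z_{i,i'}$ is $a_ic_{i'}-a_{i'}c_i$, forcing $c_i/a_i$ to be a fixed constant $\lambda$ mod $p$. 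Hence $h\equiv w^{\lambda}\prod_{j\in D}x_j^{c_j}\pmod{Z(G)}$, so $h\in\langle w,x_j\mid j\in D\rangle Z(G)$, and the two inclusions give the asserted equality.

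Finally, assume $E\ne\emptyset$ and fix $k\in E$. Since $x_k$ commutes with $w$ and with every $x_j$ ($j\in D$, using that $k$ dominates $D$), the formula just proved gives $C_G(g)\le C_G(x_k)$, and this containment is proper: for $i\in C$ we have $x_i\in C_G(x_k)$ because $\{i,k\}\in\widebar{S}$, while $[x_i,g]=\prod_{i'\in C\setminus\{i\}}z_{i,i'}^{a_{i'}}\ne 1$ (as $|C|\ge 2$ and each $a_{i'}\not\equiv 0\pmod p$), so $x_i\notin C_G(g)$. Non-degeneracy of $G(p,n,S)$ forces $x_k\notin Z(G)$, and $g\notin Z(G)$ since its $x_i$-component is nonzero for $i\in C$; so $Z(g)$ and $Z(x_k)$ are genuine vertices of $\Gamma_{\mathcal Z}(G)$, and by Lemma \ref{sub1} the strict inclusion $C_G(g)<C_G(x_k)$ says exactly that $Z(g)$ is subordinate to $Z(x_k)$, i.e.\ that $C_G(g)$ is a subordinate vertex of $\Gamma_{\mathcal Z}(G)$. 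I expect the main obstacle to be the reverse-inclusion bookkeeping — making sure that each $t\notin C\cup D$ is detected by an edge of $S$ to $C$ and that no other term of $[g,h]$ can cancel the corresponding $z_{i,t}$; the rest is routine bilinear algebra.
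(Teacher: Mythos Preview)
Your proof is correct and follows the same approach as the paper's: establish the easy inclusion by checking that $w$ and each $x_j$ ($j\in D$) commute with $g$, then for the reverse inclusion first eliminate $x_t$ with $t\notin C\cup D$ via an edge $\{i,t\}\in S$ to $C$, and finally show the $C$-part must be a power of $w$; the subordinate claim is handled identically by exhibiting $C_G(g)<C_G(x_k)$ for $k\in E$. Your treatment is in fact more detailed than the paper's, which simply says ``computing commutators, one can show'' for the $C$-part and asserts $C_G(g)<C_G(x_h)$ without exhibiting a witness to strictness; your explicit bilinear bookkeeping (and the check that no other $z$-term can cancel $z_{i,t}^{a_ic_t}$) fills in exactly what the paper omits.
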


\begin{proof}
We see that $g \in C_G (g)$.  Also, if $l \in D$, then $l$ is adjacent in $\widebar{S}$ to all $i \in C$ and $k \in E$ with $l \ne k$, so it follows that $x_l$ commutes with every component of $g$, and hence, $x_l$ commutes with $g$.  Thus, 
$$\langle g, x_j \mid j \in D \rangle  = \langle \prod_{i \in C} x_i^{a_i}, x_j \mid j \in D \rangle \le C_G (g).$$

Observe that if $h \in G$ involves some $x_l$ for $l \not\in C \cup D$, then $h$ will not commute with $g$ since there will be some $i \in C$ so that $i$ and $l$ are adjacent in $S$ and thus $x_i$ and $x_l$ do not commute.  Thus, the elements in $C_G (g)$ will only contain $x_l$ with $l \in C \cup D$.  Since we know that $C_G (g)$ contains all the $x_l$ for $l \in D$, it suffices to consider the elements that involve only $x_i$ with $i \in C$.  Computing commutators, one can show that such an element will commute with $g$ if and only if it is a power of $\displaystyle \prod_{i \in C} x_i^{a_i}$.  If $E$ is not empty, then $C_G (g) < C_G (x_h)$ for $h \in E$, and so, $C_G (g)$ is subordinate.  
\end{proof}

The remaining case is where we have an element $g$ and there is no $i$ so that $i$ is adjacent in $\widebar{S}$ to all of the $j$'s so that $x_j$ is involved in $g$.

\begin{lemma} \label{isolated type}
Let $G = G(p,n,S)$.   Suppose $C \subseteq \{ 1, \dots, n \}$ contains at least two elements and there is no element in $\{1 , \dots, n \}$ that is adjacent in $\widebar{S}$ to all of the elements in $C$.  If $\displaystyle g = \prod_{k=1}^n x_k^{a_k}$ and $a_i \geq 1$ when $i \in C$, then $C_G (g) = \langle g \rangle Z(G)$ and $C_G (g)$ is an isolated vertex in $\Gamma_{\mathcal{Z}} (G)$.  
\end{lemma}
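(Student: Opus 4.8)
The plan is to compute $C_G(g)$ explicitly, just as in the proofs of Lemmas \ref{single} and \ref{clique}, and then read off the ``isolated vertex'' assertion from the size of $C_G(g)$. The inclusion $\langle g\rangle Z(G)\le C_G(g)$ is immediate, so the work is the reverse inclusion. Write a general element of $G$ as $h=\bigl(\prod_{k=1}^n x_k^{b_k}\bigr)z$ with $0\le b_k\le p-1$ and $z\in Z(G)$. Since $G$ has nilpotence class $2$ the commutator map is bilinear, so
$$[g,h]=\prod_{\{i,j\}\in S,\ i<j} z_{i,j}^{\,a_ib_j-a_jb_i},$$
and because $G(p,n,S)$ is non-degenerate, $Z(G)=\langle z_{i,j}\mid \{i,j\}\in S\rangle$ is elementary abelian of rank $|S|$ with the $z_{i,j}$ independent. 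Hence $h\in C_G(g)$ if and only if $a_ib_j\equiv a_jb_i\pmod p$ for every edge $\{i,j\}\in S$.

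The next step is to collapse this linear system using the hypothesis on $C$. Complementing, the assumption that no vertex of $\{1,\dots,n\}$ is adjacent in $\widebar{S}$ to every element of $C$ says precisely that every vertex $v$ has an $S$-neighbour inside $C$. Given $h\in C_G(g)$, this is used in stages. First, if $a_k=0$ (so $k\notin C$), pick $c\in C$ with $\{k,c\}\in S$; then $0=a_kb_c\equiv a_cb_k$ with $a_c\ne 0$, forcing $b_k\equiv 0$, so $h$ involves only the $x_k$ with $a_k\ne 0$. Second, one shows that the scalar $\lambda_i:=b_ia_i^{-1}\bmod p$ is the \emph{same} for all $i$ with $a_i\ne 0$: whenever $\{i,j\}\in S$ with $a_i,a_j\ne 0$ the relation $a_ib_j\equiv a_jb_i$ gives $\lambda_i=\lambda_j$, and one propagates this across such edges, invoking the hypothesis to guarantee that the indices involved in $g$ are linked by edges of $S$ through $C$. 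Writing $\lambda$ for this common value, we obtain $b_k\equiv\lambda a_k$ for all $k$, i.e. $h\equiv g^{\lambda}\pmod{Z(G)}$, and therefore $C_G(g)=\langle g\rangle Z(G)$.

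Finally, for the isolated-vertex claim: since $a_i\ge 1$ for $i\in C\ne\emptyset$ we have $g\notin Z(G)$, while $g^p\in Z(G)$ because $G$ is a class-$2$ $p$-group whose generators $x_k$ have order $p$; hence $|\langle g\rangle Z(G):Z(G)|=p$, so $|C_G(g):Z(G)|=p$, and Lemma \ref{isolated prime index} shows that $C_G(g)$ is an isolated vertex of $\Gamma_{\mathcal Z}(G)$. The step I expect to be the crux is the middle one: deducing that the scalar $\lambda_i$ is uniform across \emph{all} involved indices, since the commutator relations only link indices that are adjacent in $S$, so the combinatorial hypothesis on $C$ must be exploited carefully to rule out a splitting of the involved index set into blocks carrying different scalars.
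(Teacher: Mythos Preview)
Your overall strategy mirrors the paper's: reduce to the commutator system $a_ib_j\equiv a_jb_i\pmod p$ for $\{i,j\}\in S$, first force $b_k=0$ whenever $a_k=0$ by picking an $S$-neighbour of $k$ in $C$, then argue that the ratios $\lambda_i=b_ia_i^{-1}$ agree across all involved indices, and finish via Lemma~\ref{isolated prime index}. The paper writes the middle step only as ``Computing commutators, one can show that such an element will commute with $g$ if and only if it is a power of $\prod_{i\in C}x_i^{a_i}$'', so in that sense your outline is actually more explicit than the paper's.

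The concern you flag at the end is well-founded, and your proposed resolution (``propagate across $S$-edges through $C$'') does not go through as stated. The hypothesis only guarantees that every vertex has an $S$-neighbour in $C$; it does \emph{not} force the involved index set, or even $C$ itself, to be connected in $S$. Concretely, take $n=4$, $S=\{\{1,2\},\{3,4\}\}$, $C=\{1,2,3,4\}$, and $g=x_1x_2x_3x_4$. Here $G(p,4,S)$ is non-degenerate and $\widebar S$ has no true twins. No vertex is $\widebar S$-adjacent to all of $C$ (for instance $1$ is $\widebar S$-adjacent to $3,4$ but not to $2$), so the lemma's hypotheses hold. Yet the commutator conditions reduce to $b_1\equiv b_2$ and $b_3\equiv b_4$ only, giving $C_G(g)=\langle x_1x_2,\,x_3x_4\rangle Z(G)$ with $|C_G(g):Z(G)|=p^2$, so $C_G(g)\ne\langle g\rangle Z(G)$. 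Thus the $S$-connectivity you need to make the $\lambda_i$ coalesce is a genuine additional hypothesis; it holds in the paper's intended applications but not under the hypotheses as written, and the paper's own one-line handwave at this step skates over exactly the difficulty you identified.
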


\begin{proof}
We have $g \in C_G (g)$.  Let $h \in C_G (g)$, and suppose that $x_k$ is involved in $h$.  Assume first that $x_k$ is not involved in $g$.  Then there is some $i \in C$ so that $x_i$ and $x_k$ do not commute and so $g$ and $h$ do not commute.  Thus, $h$ involves only involves $x_i$ for $i \in C$.  Computing commutators, one can show that such an element will commute with $g$ if and only if it is a power of $\displaystyle \prod_{i \in C} x_i^{a_i}$.  By Lemma \ref{isolated prime index}, $C_G (g)$ is an isolated vertex in $\Gamma_{\mathcal{Z}} (G)$.
\end{proof}

We now consider independent vertices in $\widebar{S}$.  We also revisit Lemma \ref{type 2} where $E$ is empty.

\begin{lemma}
Let $G = G(p,n,S)$.  Assume that no $i$ is an isolated vertex in $S$ (viewed as a graph).  Then the following are true:
\begin{enumerate}
\item For each $1 \le i \le n$, we have $C_G (x_i)$ is an independent vertex of $\Gamma_{\mathcal{Z}} (G)$ if and only if $i$ is an independent vertex of $\widebar{S}$.  
\item If $g \in G$ satisfies the hypotheses of Lemma \ref{type 2} and $E$ is empty, then $C_G (g)$ is an isolated vertex in $\Gamma_{\mathcal{Z}} (G)$.
\end{enumerate}
\end{lemma}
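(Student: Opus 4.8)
The plan is to translate memberships in, and inclusions among, the relevant centralizers into statements about closed neighbourhoods in $\widebar{S}$. Recall that for a non-degenerate $G=G(p,n,S)$ the group $G/Z(G)$ is elementary abelian with the images of $x_1,\dots,x_n$ as a basis, and $Z(G)$ is elementary abelian with the $z_{i,j}$ as a basis; consequently, exactly as in the proof of Lemma \ref{single}, an element $h=\prod_k x_k^{a_k}z$ with $z\in Z(G)$ commutes with $x_l$ if and only if $\{l,m\}\in\widebar{S}$ for every $m\neq l$ with $a_m\neq 0$. By Lemma \ref{single}, $C_G(x_i)$ is generated modulo $Z(G)$ by the $x_k$ with $k$ in the closed neighbourhood of $i$ in $\widebar{S}$, so the basis remark shows that $C_G(x_i)\le C_G(x_j)$ holds precisely when this neighbourhood of $i$ is contained in that of $j$, with one inclusion proper iff the other is. Hence, by Lemma \ref{sub1}, $C_G(x_i)<C_G(x_j)$ --- equivalently $Z(x_j)$ is subordinate to $Z(x_i)$ in $\Gamma_{\mathcal Z}(G)$ --- exactly when $i$ is subordinate to $j$ in $\widebar{S}$.

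For part (1), by Corollary \ref{independent max min} it is enough to prove that $C_G(x_i)$ is maximal in $\mathcal C(G)$ if and only if $i$ is an independent vertex of $\widebar{S}$. One direction is immediate from the previous paragraph: if $i$ is subordinate to some $j$ in $\widebar{S}$, then $C_G(x_i)<C_G(x_j)$ and $C_G(x_i)$ is not maximal. For the converse I would assume $C_G(x_i)<C_G(h)$ for some $h\in G\setminus Z(G)$ and reduce to the case of a single $x_m$: since $x_i\in C_G(x_i)\le C_G(h)$ we get $h\in C_G(x_i)$, so the support $T$ of $h$ lies in the closed neighbourhood of $i$ in $\widebar{S}$; since every $x_k$ with $k$ in that neighbourhood lies in $C_G(x_i)\le C_G(h)$ and hence commutes with $h$, the first paragraph forces $T$ to be a clique of $\widebar{S}$, and $i\notin T$ (if $i\in T$, then Lemma \ref{single} or Lemma \ref{clique} would give $C_G(h)\le C_G(x_i)$, contradicting $C_G(x_i)<C_G(h)$). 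Fixing $m\in T$, Lemma \ref{clique} (or Lemma \ref{single} when $|T|=1$) gives $C_G(h)\le C_G(x_m)$, so $C_G(x_i)<C_G(x_m)$, and the first paragraph then yields that $i$ is subordinate to $m$ in $\widebar{S}$; thus $i$ is not independent.

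For part (2), when $E$ is empty we have $g=\prod_{i\in C}x_i^{a_i}$ and, by Lemma \ref{type 2}, $C_G(g)=\langle g,\, x_j\mid j\in D\rangle Z(G)$. I would run the same pattern to show $C_G(g)$ is maximal in $\mathcal C(G)$, hence an independent vertex of $\Gamma_{\mathcal Z}(G)$ by Corollary \ref{independent max min}. Suppose $C_G(g)<C_G(h)$; then $g\in C_G(h)$, so $h\in C_G(g)$ and, since $C\cap D=\emptyset$, we may write $h=g^t\prod_{j\in D}x_j^{d_j}z$ for some exponents and $z\in Z(G)$. Each $x_l$ with $l\in D$ lies in $C_G(g)\le C_G(h)$ and hence commutes with $h$; computing $[x_l,h]$ and using independence of the $z_{i,j}$ forces $d_j\equiv 0\pmod p$ whenever $j\in D$ is $S$-adjacent to some other element of $D$, i.e. whenever $j\notin E$. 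As $E=\emptyset$, all $d_j$ vanish, so $h=g^tz$; and since $t\equiv 0\pmod p$ would put $h$ in $Z(G)$, the exponent $t$ is a unit and $C_G(h)=C_G(g^t)=C_G(g)$, a contradiction. Hence $C_G(g)$ is a maximal centralizer.

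The main obstacle throughout is the bookkeeping that converts ``$x$ commutes with $h$'' into an adjacency statement in $\widebar{S}$; this rests on the $z_{i,j}$ forming an independent family of central elements of order $p$, so that a product of their powers is trivial only when every exponent is $\equiv 0\pmod p$. In part (1) the added wrinkle is that a competitor $h$ need not have the shape $x_m$, which is why the argument must go through the clique description of Lemma \ref{clique} and check that $i$ lies outside the support of $h$; after that, everything collapses to the neighbourhood-inclusion dictionary of the first paragraph, and in part (2) it collapses to the observation that $E=\emptyset$ leaves no room to enlarge $C_G(g)$. I note that the argument for (2) gives that $C_G(g)$ is an independent vertex; to upgrade this to ``isolated'' one would, via Lemma \ref{isolated}, need $C_G(g)$ abelian, which the same commutator computation shows occurs exactly when $D$ is a clique of $\widebar{S}$ --- equivalently $E=D$ --- so this is the step at which the precise form of the conclusion in (2) should be double-checked against the hypothesis $E=\emptyset$.
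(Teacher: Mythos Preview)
Your argument for part (1) is correct and follows the same basic idea as the paper --- translating centralizer inclusions into closed-neighbourhood inclusions in $\widebar{S}$ --- but you organize it more cleanly via Corollary \ref{independent max min} (maximality in $\mathcal C(G)$), whereas the paper runs a case analysis through Lemmas \ref{clique}, \ref{type 2}, and \ref{isolated type}.  Your reduction from an arbitrary $h$ with $C_G(x_i)<C_G(h)$ to a single generator $x_m$ (by showing the support of $h$ is a clique in $\widebar{S}$ not containing $i$, then invoking Lemma \ref{clique}) is exactly the right manoeuvre and is somewhat tighter than the paper's treatment.

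For part (2) you have done precisely what the paper's own proof does: you show that $C_G(g)$ is maximal in $\mathcal C(G)$, hence \emph{independent} in $\Gamma_{\mathcal Z}(G)$.  Your closing observation is on target and deserves to be stated plainly: under the hypotheses of Lemma \ref{type 2} with $E=\emptyset$ and $D\neq\emptyset$, the centralizer $C_G(g)=\langle g,\,x_j:j\in D\rangle Z(G)$ is \emph{not} abelian (any $j\in D$ has an $\widebar{S}$-non-neighbour in $D$), and for any $j\in D$ the vertex $C_G(x_j)$ is a genuine neighbour of $C_G(g)$ (indeed $x_i\in C_G(x_j)\setminus C_G(g)$ for each $i\in C$), so by Lemma \ref{isolated} the vertex $C_G(g)$ is \emph{not} isolated.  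In other words, the word ``isolated'' in the statement should read ``independent''; the paper's proof, like yours, establishes only the latter, and the subsequent theorem describing the connected component $X^*$ is also consistent with $C_G(g)$ being independent (and lying in $X^*$) rather than isolated.
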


\begin{proof}
Suppose $i$ is an independent vertex in $\widebar{S}$.  By Lemma \ref{clique}, we see that any $g$ where the $x_j$'s involved in $g$ all have $j$'s coming from a clique that includes $i$ will have $C_G (g) \le C_G (x_i)$.  If $j$ is a neighbor of $i$ in $\widebar{S}$ we will have $C_G (x_j) < C_G (x_i)$ when $j$ is a subordinate vertex to $i$, and we will have $C_G (x_j) = C_G(x_i)$ when $j$ and $i$ are true twins, and we will have $C_G (x_i)$ and $C_G (x_j)$ are not comparable when $j$ is a different independent vertex.  We see from Lemmas \ref{type 2} and \ref{isolated type}, that if $g$ involves only $x_i$ and $x_j$'s that are adjacent to $x_i$, then either $C_G (g) < C_G (x_i)$ or $C_G (g)$ is an isolated vertex.  This implies that $C_G (x_i)$ is an independent vertex. 

Suppose that $C_G (x_i)$ is an independent vertex of $\Gamma_{\mathcal{Z}} (G)$.  Let $j$ be a neighbor of $i$ in $\widebar{S}$.  If $C_G (x_j)$ is a subordinate vertex to $C_G (x_i)$ in $\Gamma_{\mathcal{Z}} (G)$, then $C_G (x_j) \le C_G (x_i)$.  Notice that if $k$ is adjacent to $j$ in $\widebar{S}$, then $x_j \in C_G (x_j)$ and so, $x_k \in C_G (x_i)$ and this implies that $k$ is adjacent to $i$ in $\widebar{S}$.  It follows that $j$ is a subordinate vertex to $i$ in $\widebar{S}$.  On the other hand, if $C_G (x_j)$ is an independent vertex in $\Gamma_{\mathcal{Z}} (G)$, then there exists $x_h \in C_G (x_i) \setminus C_G (x_j)$ and $x_k \in C_G (x_j) \setminus C_G (x_i)$.  It follows that $h$ is adjacent to $i$ and not $j$ and $k$ is adjacent to $j$ and not $i$.  We conclude that $i$ is an independent vertex in $\widebar{S}$.

Suppose $g \in G$ satisfies the hypotheses of Lemma \ref{type 2} and $E$ is empty.  Thus, for every $x_i$ for $i \in D$ there is some $x_j$ with $j \in D$ so that $x_j$ does not commute with $x_i$.  We see from Lemma \ref{type 2} that both $x_i$ and $x_j$ are contained in $C_G (g)$.  It follows that $C_G (g)$ will not be contained in any $C_G (x_i)$.  From Lemmas \ref{clique}, \ref{type 2}, and \ref{isolated type}, we see that it is not contained in $C_G (h)$ for any $h$ that is a product of the $x_i$'s that do not lie in $\langle g \rangle$. 
\end{proof}

We next consider the possibility that $\widebar{S}$ has a connected component that is a complete graph.

\begin{lemma}
Let $G = G(p,n,S)$.  Assume that no $i$ is an isolated vertex in $S$ (viewed as a graph).   If a connected component $X$ of $\widebar{S}$ is a complete graph, then the corresponding connected component in $\Gamma_{\mathcal{Z}} (G)$ is an isolated vertex.
\end{lemma}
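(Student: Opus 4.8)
The plan is to show that the abelian subgroup $A = \langle x_i \mid i \in X \rangle Z(G)$ is the common centralizer of all of its noncentral elements, and then to conclude via Lemma~\ref{isolated}. Before that I would record two purely combinatorial facts about $X$ that get used repeatedly: since $X$ is a connected component of $\widebar{S}$, no vertex outside $X$ is adjacent in $\widebar{S}$ to a vertex of $X$, so $\{i,j\} \in S$ whenever $i \in X$, $j \notin X$ and $i \neq j$; and since $X$ induces a complete subgraph, $\{i,j\} \in \widebar{S}$ for all distinct $i,j \in X$. (Note also $X \neq \{1, \dots, n\}$, since otherwise $\widebar{S}$ would be complete and every vertex of $S$ would be isolated, contrary to hypothesis, so there really is a vertex outside $X$.) In particular the generators $x_i$ with $i \in X$ commute pairwise, so $A$ is abelian, and by Proposition~\ref{prop:degen} each such $x_i$ lies in $A \setminus Z(G)$, so $A > Z(G)$.

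Next I would compute $C_G(g)$ for an arbitrary $g \in A \setminus Z(G)$ and show it equals $A$. Write $g = (\prod_{i \in C} x_i^{a_i}) z$ with $z \in Z(G)$, $C \subseteq X$, and $1 \le a_i \le p-1$ for $i \in C$; since $z$ is central, $C_G(g) = C_G(\prod_{i \in C} x_i^{a_i})$. Because $G(p,n,S)$ is non-degenerate the $z_{i,j}$ generate $Z(G)$ freely, so for $j \notin X$ one has $[g, x_j] = \prod_{i \in C} z_{i,j}^{a_i}$, which is nontrivial as soon as $C \neq \emptyset$; hence $g \notin Z(G)$ forces $C \neq \emptyset$ (and conversely every such $g$ with $C \neq \emptyset$ is noncentral, so this exactly describes $A \setminus Z(G)$). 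If $|C| = 1$, say $C = \{i\}$, then $g$ has the same centralizer as $x_i$, and Lemma~\ref{single} gives $C_G(g) = \langle x_i, x_j \mid \{i,j\} \in \widebar{S} \rangle Z(G)$; by the two observations above $\{\, j : \{i,j\} \in \widebar{S} \,\} = X \setminus \{i\}$, so $C_G(g) = \langle x_j \mid j \in X \rangle Z(G) = A$. If $|C| \ge 2$, then $C$ is a clique in $\widebar{S}$, so Lemma~\ref{clique} gives $C_G(g) = \langle x_j \mid j \in C \cup D \rangle Z(G)$, where $D$ is the set of $j \notin C$ adjacent in $\widebar{S}$ to every vertex of $C$; completeness of $X$ shows $X \setminus C \subseteq D$ and the component property shows $D \subseteq X$, whence $C \cup D = X$ and again $C_G(g) = A$.

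With that in hand the conclusion is immediate from Lemma~\ref{isolated}. Fix any $g \in A \setminus Z(G)$. The previous paragraph gives $C_G(g) = A$, and for every $h \in C_G(g) \setminus Z(G) = A \setminus Z(G)$ it likewise gives $C_G(h) = A = C_G(g)$; that is, condition (4) of Lemma~\ref{isolated} holds, so condition (7) holds and $Z(g)$ is an isolated vertex of $\Gamma_{\mathcal Z}(G)$. Since $A$ is abelian, $Z(g) = Z(C_G(g)) = A$, so the vertex $A$ is isolated. Finally, $C_G(x_i) = A$ for every $i \in X$, so the connected component of $\Gamma_{\mathcal Z}(G)$ corresponding to $X$ is exactly this single isolated vertex, as claimed.

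The step I expect to require the most care is pinning down $C_G(g)$ for a general noncentral $g \in A$. Once Lemmas~\ref{single} and~\ref{clique} are quoted, however, this reduces to the two graph-theoretic observations about $X$ (the genuinely easy part), together with the one structural fact about $G(p,n,S)$ beyond those lemmas — namely, that a product of the $x_i$ with $i \in X$ is central precisely when all of its exponents vanish modulo $p$, which is just a restatement of non-degeneracy and the order count $|G| = p^{n + |S|}$.
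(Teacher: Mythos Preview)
Your proof is correct and follows essentially the same approach as the paper: both arguments show that every noncentral element of $A = \langle x_j \mid j \in X \rangle Z(G) = C_G(x_i)$ has centralizer equal to $A$, and conclude that this gives an isolated vertex. Your version is actually more careful than the paper's: you split into the cases $|C|=1$ and $|C|\ge 2$ (the paper cites only Lemma~\ref{clique}, which formally requires a clique of size at least $2$), you note that $X \neq \{1,\dots,n\}$ so that a witness $j \notin X$ exists, and you explicitly invoke condition~(4) of Lemma~\ref{isolated} rather than arguing adjacency directly.
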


\begin{proof}
Fix an element $i \in X$.  Note that $X$ is a clique in $\widebar{S}$.  Thus, if $g$ is an element of $G$ that only involves $x_j$ for $j \in X$, then Lemma \ref{clique} implies that $C_G (g) = C_G (x_i)$.  If $g$ involves any $x_j$ for $j$ outside of $X$, then the above lemmas show that $C_G (g)$ will not be contained in $C_G (x_i)$.  Also, we see that $C_G (x_j) = C_G(x_i)$ for $j \in X$.  It follows that $C_G (x_i)$ is an isolated vertex in $\Gamma_{\mathcal{Z}} (G)$.
\end{proof}

We consider the connected components of $\widebar{S}$ that are not complete graphs.

\begin{theorem}
Let $G = G(p,n,S)$.  Let $X$ be a connected component of $\widebar{S}$ that is not a complete graph, then $\Gamma_{\mathcal {Z}} (G)$ contains a connected component $X^*$ that consists of the $C_G(x_i)$'s for $i \in X$ and $C_G (g)$ for $g$ as in Lemmas \ref{clique} or \ref{type 2} where all of the $x_i$'s involved in $g$ lie in $X$ and in Lemma \ref{type 2} the set $E$ is not empty.  Furthermore, if $l$ is the diameter of $X$ as a connected component of the complement of $S$, then the diameter of $X^*$ is between $l$ and $l+2$.
\end{theorem}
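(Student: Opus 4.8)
The plan is to work throughout with $\Gamma_{\mathcal{Z}}(G)$, using the explicit description of its vertices furnished by Lemmas~\ref{single}, \ref{clique}, \ref{type 2} and~\ref{isolated type} together with the lemmas following them: every non-isolated vertex has one of the three shapes $C_G(x_i)$ (``type I''), $C_G(g)$ with $g$ a power product over a clique of $\widebar{S}$ as in Lemma~\ref{clique} (``type II''), or $C_G(g)$ with $g$ as in Lemma~\ref{type 2} and $E\ne\emptyset$ (``type III''), while all other $C_G(g)$ are isolated vertices. To each vertex I attach its \emph{support} (the indices $i$ with $x_i$ among the displayed generators of that centraliser) and its \emph{core} (the set $\{i\}$ for type I, the clique $C$ for type II, the set $C$ of Lemma~\ref{type 2} for type III), the core being where the exponents are forced nonzero. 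I then let $X^{*}$ be the set of non-isolated vertices whose support lies in $X$; using that the auxiliary sets $D$ and $E$ of Lemmas~\ref{clique} and~\ref{type 2} consist of common $\widebar{S}$-neighbours of the core, a direct check shows that $X^{*}$ is exactly the list in the statement and that every core is a nonempty subset of $X$. Since $X$ is not complete, no $C_G(x_i)$ with $i\in X$ is isolated (otherwise Lemma~\ref{isolated} and the characterisation of independent vertices would force $X$ to be a complete component), so $X^{*}$ genuinely contains all the listed $C_G(g)$.

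First I would prove that $X^{*}$ is a single connected component. By Corollary~\ref{single 2} and the standing assumption that $\widebar{S}$ has no true twins, $i\mapsto C_G(x_i)$ embeds $X$ as an \emph{induced} subgraph of $\Gamma_{\mathcal{Z}}(G)$, so $\{C_G(x_i):i\in X\}$ is connected; every type-II vertex with core $C\subseteq X$ has $C_G(g)\le C_G(x_i)$ for $i\in C$ (Lemma~\ref{clique}), hence equals or is subordinate to $C_G(x_i)$ (Lemma~\ref{sub1}), and every type-III vertex is subordinate to $C_G(x_h)$ for some $h\in E\subseteq X$ (Lemma~\ref{type 2}); so $X^{*}$ is connected. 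No edge leaves $X^{*}$: an isolated vertex has no edges, and if $v\in X^{*}$ has representative $g$ and $v'$ is a non-isolated vertex supported in a different component $Y$ with representative $g'$, then picking $r$ in the core of $v$ and $s$ in the core of $v'$ gives $\{r,s\}\in S$ (they lie in different $\widebar{S}$-components), so in $[g,g']=\prod_{\{p,q\}\in S}z_{p,q}^{a_pb_q-a_qb_p}$ the generator $z_{r,s}$ occurs with exponent $\equiv a_rb_s\not\equiv 0\pmod p$; as distinct $z_{p,q}$ are independent, $[g,g']\ne 1$ and $v\not\sim v'$.

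For the diameter bounds, the structural input is that every genuine type-II or type-III vertex $v$ is \emph{subordinate} to $C_G(x_{\phi(v)})$ for a suitable index $\phi(v)$ in the core of $v$ (type II) or in $E$ (type III); set $\phi(C_G(x_i))=i$ and write $A_w:=C_G(x_{\phi(w)})$, so $A_w=w$ for type I, and in every case $N(w)\subseteq N(A_w)$ with $w\in N[A_w]$. The upper bound then follows at once: for $u,v\in X^{*}$, $\mathrm{dist}(u,v)\le 1+\mathrm{dist}(A_u,A_v)+1\le l+2$, since $\{C_G(x_i)\}$ is an induced copy of $X$, of diameter $l$. For the lower bound I use $\phi$ as a projection onto $X$: if $u\sim v$ in $X^{*}$ then $v\in N(u)\subseteq N(A_u)$, so $A_u$ is adjacent or equal to $v$, whence $A_u\in N(v)\subseteq N(A_v)$; thus $A_u$ and $A_v$ are adjacent or equal, i.e.\ $\phi(u)$ and $\phi(v)$ are $\widebar{S}$-adjacent or equal by Corollaries~\ref{single 1} and~\ref{single 2}. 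Hence any path of length $d$ in $X^{*}$ from $C_G(x_i)$ to $C_G(x_j)$ projects to a walk of length $\le d$ in $\widebar{S}$ from $i$ to $j$, so choosing $i,j\in X$ at $\widebar{S}$-distance $l$ forces $d\ge l$; therefore $\mathrm{diam}(X^{*})\ge l$.

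I expect the main obstacle to be the bookkeeping underlying the first paragraph: one must be certain the quoted lemmas classify \emph{all} vertices of $\Gamma_{\mathcal{Z}}(G)$ (in particular, that a $C_G(g)$ not falling under types I--III is isolated --- typically by exhibiting a two-element subset of its support with no common $\widebar{S}$-neighbour and invoking Lemma~\ref{isolated type}), and that ``support'' and ``core'' behave as claimed, above all that the support of every vertex of $X^{*}$ really lies in the single component $X$. One must also clear the degenerate sub-cases of the projection step (when $A_u=A_v$, or $v=A_u$, and so on), but these all collapse through the no-true-twins hypothesis and Corollary~\ref{single 1}. Everything remaining is the bilinear commutator identity in the class-$2$ group $G$ and the independence of the generators $z_{i,j}$, which is routine.
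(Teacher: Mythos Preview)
Your proposal is correct and follows essentially the same route as the paper. Both arguments rest on the two observations that (a) the map $i\mapsto C_G(x_i)$ embeds $X$ as an induced subgraph of $\Gamma_{\mathcal{Z}}(G)$ (Corollary~\ref{single 2}), and (b) every other vertex of $X^{*}$ is subordinate to some $C_G(x_i)$ (Lemmas~\ref{clique} and~\ref{type 2}); the paper's ``replace all of the non-endpoints with vertices of the form $C_G(x_i)$'' is precisely your projection map $\phi$. Your write-up is more explicit than the paper's four-sentence proof---in particular you supply the commutator computation showing no edge of $\Gamma_{\mathcal{Z}}(G)$ can join $X^{*}$ to a vertex supported on another component of $\widebar{S}$, a point the paper leaves implicit---but the skeleton is identical.
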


\begin{proof}
By Corollary \ref{single 2}, the centralizers $C_G (x_i)$ for $i \in X$ will all lie in the same connected component of $\Gamma_{\mathcal{Z}} (G)$, and we call this connected component $X^*$.  Observe that Lemmas \ref{clique} and \ref{type 2} show that every vertex in this connected component is adjacent to a vertex of the form $C_G (x_i)$.  Hence, for any minimal path in $\Gamma_{\mathcal{Z}} (G)$, we can replace all of the non-endpoints with vertices of the form $C_G (x_i)$.   It is not difficult to see that the subgraph induced by the $C_G (x_i)$'s will be isomorphic to $\widebar{S}$.  The result now follows.  
\end{proof}

\subsection{Specific examples}

We now consider some specific examples.  We start with two examples to illustrate Theorem \ref{Z(G) eq p^4}.

\medspace
{\bf Example 1:}  Take $p$ to be any prime, $n = 4$ and take 
$$S_1 = \{ (1,3), (1,4), (2,4), (3,4) \}.$$  
Take $G_1 = G(p,4,S_1)$.  Observe that $|G_1| = p^8$ and $|G_1:Z(G_1)| = p^4$. In this case, $C_{G_1} (x_2)$ is the unique independent vertex in the nonisolated vertices and it and its neighbors make up the connected component of nonisolated vertices in $\Gamma_{\mathcal{Z}} (G_1)$.  It will have $p^3$ isolated vertices including $C_{G_1} (x_4)$.

\medspace
{\bf Example 2:}  Take $p$ to be any prime, $n = 4$ and take 
$$S_2 = \{ (1,4), (2,4), (3,4) \}.$$  
Take $G_2 = G(p,4,S_2)$.  Observe that $|G_2| = p^7$ and $|G_2:Z(G_2)| = p^4$.  


\medskip
We are ready to build the prescribed $p$-group $G$ where $\Gamma_{\mathcal Z} (G)$ has more than one nontrivial connected component.  For this we need to specify $n$ and specify the set $S$ in our construction $G(p,n,S)$.

\medskip
{\bf Example 3:}   Let $k \geq 1$ an integer, and $n_1,\dots, n_k$ integers with each $n_i > 1$.  Set $n^* = n_1 + \cdots + n_k + k$.  For each $i$, $1 \leq i \leq k$, let $$E_i = \{ \{j,j+1\} \,\, | \,\, n_1 + \cdots + n_{i-1} + i \leq j < n_1 + \cdots + n_i + i \}.$$  Let  
$$S^* = \{ \{i,j\} \,\, | \,\, 1 \leq i < j \leq n \} - \bigcup_{1 \leq i \leq k} E_i.$$  
Note that we are taking $S^*$ to be the complement of $\cup_{i=1}^k E_i$.  Let $G = G(p,n^*,S^*)$ where $p$ is a prime and $n^*$ and $S^*$ are specified as above.  We will use the notation $G = {\mathfrak G}_p (n_1,\dots,n_k)$ for these groups. 

To illustrate this example we give an example of specific numbers.  Fix $k=3$ and $n_1 = 5$, $n_2 = 7$ and $n_3 = 4$.  So $n = 5 + 7 + 4 + 3 = 19$.  We have
\begin{gather*}
	E_1 = \{ \{1,2\}, \{2,3\}, \{3,4\}, \{4,5\}, \{5,6\} \},\\
	E_2 = \{ \{7,8\}, \{8,9\}, \{9,10\}, \{10,11\}, \{11,12\}, \{12,13\}, \{13,14\} \},\\
	E_3 = \{ \{15,16\}, \{16,17\}, \{17,18\}, \{18,19\} \}.
\end{gather*}

And so $S$ consists of all $\{i,j\}$'s other than these exceptions.  This example would be denoted ${\mathfrak G}_p (5,7,4)$.  Note that ${\mathfrak G}_p (n_1,\dots,n_k)$ is nondegenerate for all cases except for ${\mathfrak G}_p (2)$.

Returning to the general case of $G = {\mathfrak G}_p (n_1,\dots,n_k)$, we see that $|G:Z(G)| = p^n$.  Viewing $\widebar{S}$ as a graph, the connected components are going to be the $E_i$'s and they are paths each of length at least $2$; so none of them are complete graphs.  Hence, each corresponds to a connected component of $\Gamma_{\mathcal{Z}} (G)$ that does not consist of isolated vertices.  We claim that it is not too difficult to see that the diameter will be the same.  When $k = 1$, we obtain a group whose commuting graph has a connected component has diameter $n_1$, and if $n_1 \ge 3$, then the commuting graph has isolated vertices.  It follows that the examples we have are different than the ones in \cite{camInd} and \cite{nobound}.  

We close that by noting that all of the groups constructed in this manner that have more than one connected component that is not complete have isolated vertices in $\Gamma_{\mathcal{Z}} (G)$.   It would be interesting to see if there is a way to construct groups with centralizer graphs that have multiple connected components where all of the connected components have diameter at least $2$.

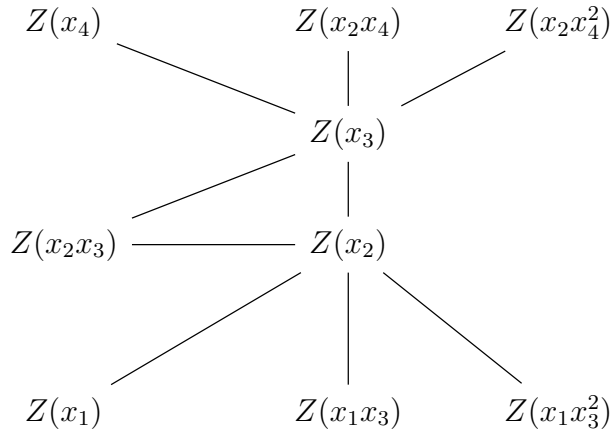
\begin{figure}[H]
	\[\begin{tikzcd}
		{Z(x_4)} && Z(x_2x_4) & Z(x_2x_4^2)  \\
		&& Z(x_3)  \\
		{Z(x_2x_3) } && {Z(x_2) } \\
		\\
		{Z(x_1) } &&Z(x_1x_3)  &Z(x_1x_3^2) 
		\arrow[no head, from=2-3, to=1-1]
		\arrow[no head, from=2-3, to=1-3]
		\arrow[no head, from=2-3, to=1-4]
		\arrow[no head, from=2-3, to=3-1]
		\arrow[no head, from=3-3, to=2-3]
		\arrow[no head, from=3-3, to=3-1]
		\arrow[no head, from=3-3, to=5-1]
		\arrow[no head, from=3-3, to=5-3]
		\arrow[no head, from=3-3, to=5-4]
	\end{tikzcd}\]
	\caption{Component $C_1$ of the centralizer graph $\Gamma_{\mathcal Z} (G)$ for $G={\mathfrak G}_3(3,n_2,\dots,n_k)$.}
	\label{fig: p_group_example}
\end{figure}

\end{document}